\theoremstyle{plain}
\newtheorem{thm}{Theorem}[section]
\newtheorem{pro}{Proposition}[section]
\newtheorem{lem}{Lemma}[section]
\newtheorem{res}{Remarks}[section]
\newtheorem{Ex}{Examples}[section]
\def\eps{\varepsilon}
\title[Single-point blow-up for reaction-diffusion systems]{Improved conditions for single-point
blow-up in reaction-diffusion systems}
\author[N. Mahmoudi]{Nejib Mahmoudi}
\address{Universit\'e de Tunis El Manar, Facult\'e des Sciences de Tunis,
 D\'epartement de Math\'ematiques, Laboratoire  \'Equations aux D\'eriv\'ees
Partielles LR03ES04, 2092 Tunis,
Tunisie.}\email{mahmoudinejib@yahoo.fr}
\author[Ph. Souplet]{Philippe Souplet}
\address{Universit\'e Paris 13, Sorbonne Paris Cit\'e, CNRS UMR 7539 LAGA, 99,
Avenue Jean-Baptiste Cl\'ement, 93430 Villetaneuse,
France.}\email{souplet@math.univ-paris13.fr}
\author[S. Tayachi]{Slim Tayachi}
\address{Universit\'e de Tunis El Manar, Facult\'e des Sciences de Tunis, D\'epartement de
Math\'ematiques, Laboratoire  \'Equations aux D\'eriv\'ees
Partielles LR03ES04,  2092 Tunis,
Tunisie.}\email{slim.tayachi@fst.rnu.tn} \subjclass[2010]{Primary:
35B40; 35B44; 35B50. Secondary: 35K61; 35K40; 35K57} \keywords{Nonlinear
initial-boundary value problems, nonlinear parabolic equations,
reaction-diffusion systems, asymptotic behavior   
of solutions, single-point blow-up, blow-up profile.}
\begin{document}
\begin{abstract}  We study positive blowing-up
solutions of the system:
$$u_{t}-\delta\Delta u=v^p,\,\,\, v_{t}-\Delta v=u^{q},$$
as well as of some more general systems.
For any $p,\,q>1$, we prove single-point blow-up for any radially
decreasing, positive and classical solution in a ball. 
This improves on previously known results in 3 directions:

(i) no type~I blow-up assumption is made
(and it is known that this property may fail);

(ii) no equidiffusivity is assumed, i.e. any $\delta>0$ is allowed;

(iii) a large class of nonlinearities $F(u,v)$, $G(u,v)$ can be handled, 
which need not follow a precise power behavior.

As side result, we also obtain lower pointwise estimates for the final blow-up profiles.
\end{abstract}
\maketitle

\section{Introduction}
\subsection{Problem and main results}
 In this paper, we consider nonnegative solutions of the following
reaction-diffusion system:
\begin{equation}\label{1b}
 \left\{
  \begin{array}{ll}
    u_{t}-\delta\Delta u=v^p  &\,x\in\Omega,\,t>0,\\
    v_{t}-\Delta v=u^q,  &\,x\in\Omega,\,t>0,\\
    u=v=0,  &x\in\partial\Omega,\,t>0,\\
    u(0,x)=u_{0}(x), &x\in\Omega,\\
    v(0,x)=v_{0}(x), &x\in\Omega,
\end{array}
\right.
\end{equation}
as well as of the more general system
\begin{equation}\label{1}
 \left\{
  \begin{array}{ll}
    u_{t}-\delta\Delta u=F(u,\,v),  &\,x\in\Omega,\,t>0,\\
    v_{t}-\Delta v=G(u,\,v),  &\,x\in\Omega,\,t>0,\\
    u=v=0,  &x\in\partial\Omega,\,t>0,\\
    u(0,x)=u_{0}(x), &x\in\Omega,\\
    v(0,x)=v_{0}(x), &x\in\Omega.
\end{array}
\right.
\end{equation}
Here $p,\,q>1$, $\delta>0,$ $\Omega=
B(0,\,R)=\left\{x\in\mathbb{R}^n\; ;|x|<R\right\}$ with $R>0$,
\begin{equation}\label{initialdata}
u_{0}, v_{0}\in L^{\infty}(\Omega),
\quad\hbox{ $u_{0}, v_{0}\ge 0$, radially symmetric, radially nonincreasing. }
\end{equation}
As for the functions $F$ and $G$, we assume that
\begin{equation}\label{classical}
F,\,G\in C^1(\mathbb{R}^2)
\end{equation}
and that system (\ref{1}) is cooperative, i.e.:
\begin{equation}\label{monotone}
F_v(u,\,v),\,G_u(u,\,v)\geq0,\quad\hbox{ for all $u, v\ge 0$.} 
\end{equation}
Additional assumptions on $F, G$ will be made below.
 
Under assumptions (\ref{initialdata})--(\ref{monotone}), 
system~(\ref{1}) has a unique nonnegative, radially symmetric and radially nonincreasing
maximal solution $(u,v)$, classical for $t>0$. 
This fact follows by standard contraction mapping and maximum principle arguments. 
The maximal existence time of $(u,v)$ is denoted by $T^\ast\in (0,\infty]$.
If, moreover, $T^\ast<\infty$, then
 \begin{equation*}
\limsup_{t \rightarrow T^{\ast}}\,(\| u(t)\|+\|v(t)\|_{\infty})=\infty,
\end{equation*}
 and we say that the solution blows up in finite time with blow-up
time $T^\ast$.
Also, without risk of confusion, we shall denote $\rho=|x|$, $u=u(t,\rho)$, $v=v(t,\rho)$.
So we have
\begin{equation}\label{monot}
u_\rho,\ v_\rho\le 0 \quad\hbox{ in $(0,T^\ast)\times\overline\Omega$.}
\end{equation}

\smallskip

Problem~(\ref{1b}) is a basic model case for reaction-diffusion systems
and, as such, it has been the subject of intensive investigation for more than 20 years
(see e.g. \cite[Chapter~32]{pavol} and the references therein).
We are here mainly interested in proving
single-point blow-up for systems~(\ref{1b}) and~(\ref{1}). 

\smallskip
For system~(\ref{1b}),
the blow-up set was first studied in \cite{giga}. In that work,
Friedman and Giga proved that blow-up occurs only at
the origin for symmetric nonincreasing initial data in dimension $n=1$,
under the very restrictive conditions $p=q$ and $\delta=1$.  Note that these assumptions are
essential in \cite{giga} in order to apply the maximum principle to
suitable linear combination of the components $u$ and $v$, so as to
derive comparison estimates between them.

Let us recall that, for scalar equations, the first result on single-point blow-up
was obtained by Weissler \cite{fred}, and that different methods
were subsequently developed in \cite{friedman, MullerWeissler}. In turn, the method of
Friedman and Giga for systems is based on an extension of that in
\cite{friedman} for a single equation. More recently, the
restriction $p=q$ was removed by the second author \cite{souplet}, who proved
single-point blow-up for radial nonincreasing solutions of~(\ref{1b}) for any $p,q>1$ and $n\ge 1$. 
However, the equidiffusivity assumption
$\delta=1$ is still needed in  \cite{souplet} and, in addition, it
is required that the solution satisfies the upper type~I blow-up
rate estimates
\begin{equation}\label{7}
\underset{0<t<T^{\ast}}{\sup}(T^\ast-t)^\alpha\|u(t)\|_\infty<\infty,\quad\underset{0<t<T^{\ast}}{\sup}(T^\ast-t)^\beta\|v(t)\|_\infty<\infty,
\end{equation}
where
\begin{eqnarray}\label{62}
\alpha=\frac{p+1}{pq-1},\quad\beta=\frac{q+1}{pq-1}.
\end{eqnarray}

\smallskip
The purpose of this paper, still for any $p,q>1$, is to further
remove the previously made extra assumptions. More precisely, we
shall improve the known results in three directions, by proving
single-point blow-up:

(i) {\bf without assuming the  type~I blow-up} rate estimate (\ref{7});

(ii) {\bf without assuming equidiffusivity}, i.e. for any $\delta>0$;

(iii) including for {\bf general problem} such as (\ref{1}).

Direction (i) seems the more important and challenging one, since
estimate (\ref{7}) is not known in general and need not even be
true. It usually requires either the hypothesis that $p$ or $q$ are
not too large (see e.g. \cite{chelebik, fila}), or that the solution
is monotone in time. Indeed,  for large $p$, even in the particular
case of the scalar problem, there exist radial nonincreasing,
single-point blow-up solutions of type~II (i.e., such that (\ref{7})
fails); see \cite{velazquez2, velazquez3, mizog}.
As for the  case of  monotone in time solutions, it seems that the
known proofs of~(\ref{7}) for systems (see e.g. \cite{deng}) usually require
$\delta=1$. Also we recall that non-equidiffusive parabolic systems
are often much more involved, both in terms of behavior of solutions
and at the technical level (cf.~\cite{pierre} and \cite[Chapter~33]{pavol}). 
As for the general problem (\ref{1}), we shall be able to handle a large class of nonlinearities 
which need not follow a precise power behavior.
The features (i)-(iii) will require a number of nontrivial new ideas,
building on the approach in \cite{souplet}, which is here improved and made more flexible.
See Section~1.2 below for details.

The main results of this paper are the following.

\begin{thm}\label{59a}
Let $\Omega= B(0,\,R)$, $p,\,q>1$ and $\delta >0$.
Assume (\ref{initialdata}) and let the solution $(u,\,v)$ of~(\ref{1b}) satisfy $T^\ast<\infty$.
Then blow-up occurs only at the origin, i.e.
\begin{eqnarray}\label{8}
\underset{0<t<T^{\ast}}{\sup}(u(t,\,\rho)+v(t,\,\rho))<\infty,\quad\hbox{for
all}\,\,\rho\in(0,R).
\end{eqnarray}
\end{thm}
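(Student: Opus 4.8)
The plan is to adapt the method of \cite{souplet} but to replace the type~I input by a self-contained argument. First I would work in the radial variables, writing the system for $u(t,\rho),v(t,\rho)$ on $(0,R)$, and recall from \eqref{monot} that $u_\rho,v_\rho\le 0$. The core of the single-point blow-up proof is to construct, for each fixed $\rho_0\in(0,R)$, auxiliary functions of the form $J(t,\rho)=u_\rho(t,\rho)+c(\rho)\,\Phi(u)$ and $\widetilde J(t,\rho)=v_\rho(t,\rho)+\tilde c(\rho)\,\Psi(v)$ on a region $\{\rho_0<\rho<R\}$, where $\Phi,\Psi$ are suitable increasing powers (roughly $\Phi(u)\sim u^{1+a}$, $\Psi(v)\sim v^{1+b}$ for small $a,b>0$ tuned to $p,q$) and $c,\tilde c$ are small, carefully chosen cutoff-type weights vanishing near $\rho=\rho_0$. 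The goal is to show $J\le 0$ and $\widetilde J\le 0$ in that region via the parabolic maximum principle, which upon integration in $\rho$ from $\rho$ to $R$ yields pointwise bounds $u(t,\rho)\le C(\rho)$, $v(t,\rho)\le C(\rho)$ uniform in $t<T^\ast$, i.e. \eqref{8}.

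The key steps, in order: (1) Derive the PDEs satisfied by $J$ and $\widetilde J$; since $u_\rho$ solves a linear parabolic equation with a first-order term $\frac{\delta(n-1)}{\rho^2}$ and the nonlinear coupling brings in $v^{p-1}v_\rho$ (and symmetrically $u^{q-1}u_\rho$), the cross terms must be absorbed — this is where the cooperative structure and the coupled treatment of the pair $(J,\widetilde J)$ are essential, and where the non-equidiffusive case $\delta\neq 1$ forces extra care in matching the scaling of the two components. (2) Choose the exponents $a,b$ and the weights $c(\rho),\tilde c(\rho)$ so that the reaction contributions to the equations for $J,\widetilde J$ have a favorable sign wherever $J,\widetilde J\ge 0$; this typically needs a smallness condition that is arranged by shrinking the weights and, crucially, by a preliminary \emph{a priori} bound showing $u,v$ cannot blow up ``too fast'' away from a neighborhood of the origin. (3) Handle the boundary of the region: at $\rho=R$ one has $u=v=0$ so $u_\rho,v_\rho\le 0$ gives $J,\widetilde J\le 0$ there; at $\rho=\rho_0$ the vanishing of $c,\tilde c$ gives $J=u_\rho\le 0$, $\widetilde J=v_\rho\le 0$; at $t=0$ one uses \eqref{initialdata} together with a smoothing step at a small positive initial time. (4) Apply the maximum principle to conclude $J,\widetilde J\le 0$, then integrate.

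The main obstacle I expect is step~(2): removing the type~I assumption means the reaction terms in the $J,\widetilde J$ equations cannot be controlled by the crude bound $(T^\ast-t)^{-\alpha}$; instead one needs a localized, \emph{time-independent} upper estimate on $u,v$ on annuli $\{\rho\ge\rho_0\}$ as an input, but that is essentially the conclusion one is trying to prove. The way around this is a bootstrap/continuity-in-$\rho_0$ argument: one first proves a weak local bound (e.g. that $u,v$ are bounded in $L^1$ or in weighted integral norms away from the origin, or that blow-up on $\{\rho\ge\rho_0\}$ would propagate inward, contradicting radial monotonicity), then feeds it back into the maximum-principle step to upgrade it to an $L^\infty$ bound, and finally lets $\rho_0\to 0$. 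The non-equidiffusivity $\delta\neq1$ compounds this because the two heat semigroups have different scales, so the comparison functions $\Phi,\Psi$ and weights $c,\tilde c$ must be chosen asymmetrically; verifying the resulting differential inequalities is the technical heart of the proof.
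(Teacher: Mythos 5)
Your maximum-principle skeleton is the right one and matches the paper's Section~2: the paper also takes $J=u_\rho+\eps c(\rho)u^{\gamma}$, $\overline J=v_\rho+\eps\overline c(\rho)v^{\overline\gamma}$ with $\gamma,\overline\gamma>1$ close to $1$ and a localized weight, shows $J,\overline J\le 0$ by the maximum principle, and integrates. But there is a genuine gap at your step~(2), and your diagnosis of the obstacle points in the wrong direction. For system \eqref{1b} the reaction in the $u$-equation is $v^p$, so the remainder term in the inequality for $J$ contains $+\gamma c\,u^{\gamma-1}v^p$, which must be absorbed by the good term $-\kappa p\,c\,v^{\overline\gamma-1}v^{p}$ coming from $F_v\,v_\rho$. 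After dividing by $c\,u^{\gamma-1}v^p$, the sign condition becomes a \emph{lower} bound on $v^{\overline\gamma-1}/u^{\gamma-1}=\bigl(v^{p+1}/u^{q+1}\bigr)^{(\gamma-1)/(p+1)}$, i.e.\ a \emph{two-sided} comparison $C_1'\le u^{q+1}/v^{p+1}\le C_2'$ near the alleged blow-up point (this is \eqref{31} in the paper). An upper bound alone --- your ``$u,v$ cannot blow up too fast away from the origin,'' which is indeed easy via Kaplan's method (Proposition~\ref{65}) --- does not give this: one also needs a \emph{lower} type~I bound $u\ge C_1(T^\ast-t)^{-\alpha}$, $v\ge C_1(T^\ast-t)^{-\beta}$ on a compact interval at the left of the blow-up point. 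That lower bound is the heart of the paper (Proposition~\ref{30}, proved in Sections~4--6 via a non-degeneracy criterion in similarity variables with delayed smoothing effects, convergence of rescaled solutions to an ODI system, and a local interdependence property of its components), and nothing in your ``bootstrap / weak $L^1$ bound upgraded to $L^\infty$'' sketch supplies it. Without some quantitative comparison between $u$ and $v$ the cross term cannot be signed, which is precisely why Friedman--Giga needed $p=q$, $\delta=1$ and why \cite{souplet} needed the type~I hypothesis \eqref{7}.

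A secondary but related issue is your geometry. You place the weight on the outer region $\{\rho_0<\rho<R\}$ with $c$ vanishing at $\rho_0$ and use the boundary condition at $\rho=R$; the paper instead argues by contradiction, assumes $\rho_0$ is a blow-up point, and runs the maximum principle on the compact interval $[\rho_0/4,\rho_0/2]$ strictly to the \emph{left} of $\rho_0$, with $c(\rho)=\sin^2\bigl(\pi(\rho-\rho_1)/(\rho_2-\rho_1)\bigr)$ vanishing at both endpoints. This is not cosmetic: the two-sided type~I estimates are only established on compact subintervals of $(0,\rho_0)$ when $\rho_0$ is a blow-up point, so the favorable-sign condition is only available there; boundedness of $u(t,\rho_0/2)$ then contradicts blow-up at $\rho_0$ by radial monotonicity. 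Your outer region carries no such estimates, so the maximum-principle step cannot be closed there.
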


Our next result, which concerns system (\ref{1}),
actually contains Theorem~\ref{59a} as a special case but,
in view of the special interest of system~(\ref{1b}),
we prefered to state Theorem~\ref{59a} separately.
We will assume the following conditions on the functions $F, G$:
\begin{eqnarray}\label{controle}
\hspace{-4cm}& &c_1 v^p\leq F(u,\,v)\leq c_2(v^p+u^r+1),\\\label{controle1}
\hspace{-4cm}& &c_1 u^q \leq G(u,\,v)\leq c_2(u^q+v^s+1),
\end{eqnarray}
for all $u,\,v\geq0$  and for some positive constants  $c_1,\,c_2,$ where
\begin{eqnarray}\label{controle1b}
& &r=\frac{p(q+1)}{p+1}\quad\mbox{and}\quad s=\frac{q(p+1)}{q+1},
\end{eqnarray}
and
\smallskip
\begin{equation}\label{controle2}
 \left\{
  \begin{array}{ll}
& \hbox{for all $C_1,C_2>0$, there exist $\mu,A,\kappa_1,\kappa_2>0$ with $\kappa_1\kappa_2<1$, such that} \\
\noalign{\vskip 2mm}
& \quad 
(1+\mu)F\leq u F_u+ \kappa_1 v F_v\quad \hbox{ and }\quad
(1+\mu)G\leq v G_v+ \kappa_2 u G_u \\
\noalign{\vskip 2mm}
& \hskip 4cm \hbox{ on $D:=\Bigl\{(u,\,v)\in[A,\,\infty)^2\,;\,C_1\leq \displaystyle\frac{u^{q+1}}{v^{p+1}}\leq C_2\Bigr\}$.}
\end{array}
\right.
\end{equation}
\hskip 1pt

\begin{thm}\label{59}
Let $\Omega= B(0,\,R)$, $p,\,q>1,$ $\delta >0$. 
Assume (\ref{initialdata})--(\ref{monotone}) and (\ref{controle})--(\ref{controle2}).
Let the solution $(u,\,v)$ of (\ref{1}) satisfy $T^\ast<\infty$.
Then blow-up occurs only at the origin, i.e. (\ref{8}) holds.
\end{thm}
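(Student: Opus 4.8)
The plan is to adapt and strengthen the approach of \cite{souplet} so as to dispense with the type~I assumption, allow $\delta\ne 1$, and cover the general nonlinearities satisfying (\ref{controle})--(\ref{controle2}). The core idea is a \emph{parabolic maximum principle argument applied to auxiliary functions of the form}
\begin{equation*}
J(t,\rho)=u_\rho(t,\rho)+c(\rho)\,\Phi\bigl(u(t,\rho)\bigr),\qquad
\tilde J(t,\rho)=v_\rho(t,\rho)+\tilde c(\rho)\,\tilde\Phi\bigl(v(t,\rho)\bigr),
\end{equation*}
where $c,\tilde c$ vanish near the boundary $\rho=R$ and are supported away from the origin, and $\Phi,\tilde\Phi$ are carefully chosen increasing functions of the components. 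The goal is to prove differential inequalities $J\le 0$, $\tilde J\le 0$ on a region $\{\rho\ge \rho_0\}$ for $\rho_0$ small, which upon integration in $\rho$ yields pointwise bounds on $u,v$ away from the origin, uniformly in $t<T^\ast$, giving exactly (\ref{8}). First I would record the radial form of the equations, with the terms $\delta(u_{\rho\rho}+\tfrac{n-1}{\rho}u_\rho)$ and $v_{\rho\rho}+\tfrac{n-1}{\rho}v_\rho$, and derive the parabolic equations satisfied by $J$ and $\tilde J$, noting that by (\ref{monotone}) the reaction couplings produce only favorable-sign zeroth-order terms in the coupled system for $(J,\tilde J)$, so that a vectorial maximum principle applies.

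The main new ingredient, needed to kill the type~I hypothesis, is to choose $\Phi$ and $\tilde\Phi$ \emph{not} as pure powers but so that the ``bad'' curvature terms and cross terms in the $J$- and $\tilde J$-equations can be absorbed using only the structural inequalities (\ref{controle})--(\ref{controle2}) rather than any a priori bound on $\|u\|_\infty,\|v\|_\infty$. Concretely: where \cite{souplet} used $\Phi(u)\sim u^{1+\eps}$ and the bound $u\le C(T^\ast-t)^{-\alpha}$ to close the estimate, here I would instead exploit the scaling relation $r=\tfrac{p(q+1)}{p+1}$, $s=\tfrac{q(p+1)}{q+1}$ (which makes $F$ and $G$ homogeneous of the ``right'' weight along the curve $u^{q+1}\sim v^{p+1}$) together with the ``superhomogeneity'' conditions $(1+\mu)F\le uF_u+\kappa_1 vF_v$, $(1+\mu)G\le vG_v+\kappa_2 uG_u$ with $\kappa_1\kappa_2<1$. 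The condition $\kappa_1\kappa_2<1$ is precisely what allows one to choose the two exponents/functions $\Phi,\tilde\Phi$ compatibly so that the coupled system of differential inequalities for $(J,\tilde J)$ decouples in sign. A second new ingredient handles the non-equidiffusivity: since one can no longer take linear combinations of $u,v$, I would instead run the $J$ and $\tilde J$ arguments \emph{separately} for each component, using a preliminary comparison between $u$ and $v$ (of the form $c\,v^{(p+1)/(q+1)}\le u\le C\,v^{(p+1)/(q+1)}$ away from the origin, valid for $t$ near $T^\ast$) to guarantee that the solution stays in the region $D$ of (\ref{controle2}) where the structural hypotheses are usable; this comparison itself would be obtained by another maximum principle argument, or cited from the preliminary lemmas of the paper.

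The key steps, in order, are: (1) reduce to showing the pointwise bound on an annulus $\{\rho_0\le\rho\le R\}$, and fix a time $t_0<T^\ast$ from which the solution is ``large'' in the relevant region; (2) establish the two-sided comparison $u\asymp v^{(p+1)/(q+1)}$ on $\{\rho\ge\rho_0\}\times[t_0,T^\ast)$, ensuring membership in $D$; (3) choose the cutoffs $c(\rho),\tilde c(\rho)$ (supported in $(\rho_0,R)$, with controlled derivatives) and the profile functions $\Phi,\tilde\Phi$, using the exponents $\mu,\kappa_1,\kappa_2$ from (\ref{controle2}) applied with $C_1,C_2$ coming from step~(2); (4) compute the parabolic operators applied to $J$ and $\tilde J$, check that the boundary terms at $\rho=\rho_0$ (where $u_\rho\le 0$ so $J\le 0$ there) and $\rho=R$ (where the cutoff vanishes and $u_\rho\le 0$) and at $t=t_0$ have the right sign, and verify that all interior terms can be absorbed thanks to the structural inequalities; (5) apply the maximum principle to conclude $J,\tilde J\le 0$, then integrate $u_\rho\le -c(\rho)\Phi(u)$ in $\rho$ to get $\sup_{t<T^\ast}u(t,\rho)<\infty$ for $\rho\in(\rho_0,R)$, and likewise for $v$, which gives (\ref{8}); the lower blow-up profile estimate drops out of the same integrated inequality. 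I expect step~(4) — and within it, the precise choice of $\Phi,\tilde\Phi$ making the curvature and coupling terms absorbable \emph{without} a type~I bound — to be the main obstacle, since this is exactly where the three improvements (i)--(iii) all have to be reconciled simultaneously, and it is where the condition $\kappa_1\kappa_2<1$ and the exact values of $r,s$ must be used in an essential and quantitatively tight way.
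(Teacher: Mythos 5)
Your final step --- the maximum-principle argument for $J=u_\rho+c(\rho)\Phi(u)$, $\tilde J=v_\rho+\tilde c(\rho)\tilde\Phi(v)$, with the superhomogeneity condition (\ref{controle2}) and $\kappa_1\kappa_2<1$ used to absorb the coupling terms --- does match the paper's Section~2 (there $\Phi(u)=\eps u^\gamma$, $\tilde\Phi(v)=\eps\kappa v^{\bar\gamma}$ with $\gamma,\bar\gamma>1$ close to $1$, and the cutoff is a $\sin^2$ supported on $[\rho_0/4,\rho_0/2]$, i.e.\ to the \emph{left} of an alleged blow-up point $\rho_0$, so that integrating $-u_\rho\ge\eps c\,u^\gamma$ bounds $u(t,\rho_0/2)$ and contradicts blow-up at $\rho_0$ via radial monotonicity). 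The genuine gap is your step~(2). You dismiss the two-sided comparison $u\asymp v^{(p+1)/(q+1)}$ --- equivalently the local upper \emph{and lower} type~I bounds $C_1\le(T^\ast-t)^\alpha u\le C_2$, $C_1\le(T^\ast-t)^\beta v\le C_2$ near a nonzero blow-up point --- as obtainable ``by another maximum principle argument, or cited from the preliminary lemmas.'' It cannot be: the maximum-principle route to such comparisons (Friedman--Giga) is precisely what forces $p=q$ and $\delta=1$, the restrictions the theorem is removing. In the paper this comparison is Proposition~\ref{30}, and its proof is the bulk of the work (Sections~3--6): Kaplan's eigenfunction method for the upper bounds; then, for the lower bounds, a non-degeneracy criterion proved in similarity variables with delayed smoothing effects, where the absence of a global type~I bound is compensated by truncating the domain and comparing with a reflected supersolution built from the local upper bound; then convergence of rescaled solutions to solutions of an ODI system together with the interdependence property $\phi(0)=0$ if and only if $\psi(0)=0$; and finally a contradiction argument assembling these pieces. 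None of this appears in your proposal, so the hypothesis that $(u,v)$ stays in the region $D$ of (\ref{controle2}) is never actually verified.

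A secondary inaccuracy: you locate the removal of the type~I hypothesis in a clever non-power choice of $\Phi,\tilde\Phi$. The paper keeps pure powers; the remainder terms in the $J$-equation contain the ratio $v^{\bar\gamma-1}/u^{\gamma-1}$, whose control requires exactly the two-sided bounds above, so the type~I-free feature comes from \emph{proving} those bounds locally (rather than assuming (\ref{7}) globally), not from redesigning $\Phi$. Also, running the argument on the outer annulus $\{\rho\ge\rho_0\}$ is problematic because the lower type~I bounds are only available on compact subintervals of $(0,\rho_0)$ to the left of a blow-up point; the paper's localization is chosen to respect this.
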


We immediately give examples of nonlinearities to which Theorem~\ref{59} applies.

\begin{Ex}\label{59bEx}
(i) The result of Theorem~\ref{59} is valid for system (\ref{1}) with 
\begin{equation}\label{defFGex1}
F(u,\,v)=\lambda v^p+\displaystyle{\sum_{i=1}^{m}} \lambda_i u^{r_i} v^{s_i}\quad\mbox{and}\quad G(u,\,v)=\overline{\lambda}u^q+\displaystyle{\sum_{i=1}^{m}} \overline{\lambda}_i u^{\overline{r}_i} v^{\overline{s}_i},
\end{equation}
where $p, q>1$, $m\geq1$ and for all $1\leq i \leq m,$ $r_i,\,s_i,\,\overline{r}_i,\,\overline{s_i}, \lambda_i, \overline{\lambda}_i\geq 0$,
\begin{equation}\label{defFGex2}
r_i\frac{p+1}{q+1}+s_i\leq p\quad\mbox{and}\quad \overline{r}_i+\overline{s}_i\frac{p+1}{q+1}\leq q.
\end{equation}
We note that the requirement that $F, G$ be of class $C^1$ imposes $r_i, s_i, \overline{r}_i, \overline{s}_i\in \{0\}\cup [1,\infty)$.
However, in case some of these numbers belong to $(0,1)$, Theorem~\ref{59} still applies if
$F, G$ only coincide with the expressions in (\ref{defFGex1}) for $u$ and $v$ sufficiently large.
We stress that $F, G$ in (\ref{defFGex1}) are {\it not} mere perturbations of $v^p, u^q$. Indeed, when we have equality in (\ref{defFGex2}),
the additional terms are critical in the sense of scaling. 
\smallskip

(ii) The result of Theorem~\ref{59} is also valid for system (\ref{1}) with
\begin{equation}\label{defFGex3}
F(u,\,v)=v^p\bigl[1+\lambda \sin^2\bigl(k\log(1+v)\bigr)\bigr]\quad\mbox{and}\quad 
G(u,\,v)=u^q\bigl[1+\overline\lambda \sin^2(\overline k \log(1+u)\bigr)\bigr]
\end{equation}
where
\begin{equation}\label{defFGex4}
p, q>1  \quad \lambda, \overline\lambda>0, \quad 0<k<\frac{(p-1)\sqrt{1+\lambda}}{\lambda}
\quad 0<\overline k<\frac{(q-1)\sqrt{1+\overline\lambda}}{\overline\lambda}.\end{equation}
Note that Theorem~\ref{59} thus allows nonlinearities $F, G$ with oscillations of arbitrarily large amplitude
around $v^p, u^q$  (since $\lambda, \overline\lambda$ can be arbitrarily large in (\ref{defFGex4})).
\end{Ex}

Finally, in the case of monotone in time solutions, we extend to
system (\ref{1})  the lower pointwise estimates from \cite{souplet}
on the final blow-up profiles.
\begin{thm}\label{69}
Let $\Omega= B(0,\,R),$ $p,\,q\ge 1$ and $\delta>0$. 
Assume (\ref{initialdata})--(\ref{monotone}), (\ref{controle})--(\ref{controle1b})
and let the solution $(u,\,v)$ of (\ref{1}) satisfy $T^\ast<\infty$. Assume in addition that $u_t, v_t\ge 0$.
Then there exist constants $\eps_0, \eps_1>0$, such that
 \begin{eqnarray*}
& &|x|^{2\alpha}u(T^{\ast},\,x)\geq \eps_0,\qquad 0<|x|<\eps_1
\end{eqnarray*}
and
\begin{eqnarray*}
& &|x|^{2\beta}v(T^{\ast},\,x)\geq \eps_0,\qquad 0<|x|<\eps_1,
\end{eqnarray*}
where $\alpha$ and $\beta$ are given by \eqref{62}.

\end{thm}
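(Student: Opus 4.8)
The plan is to use the time-monotonicity to realize the final profile as a genuine \emph{singular steady state}, and then to fix the rate of its singularity by combining the lower blow-up rate estimate with the a priori \emph{upper} profile bounds that come out of the single-point blow-up analysis.

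\textbf{Step 1: the final profile as a singular steady state.} Since $u_t,v_t\ge0$, the limits $U(x):=\lim_{t\to T^\ast}u(t,x)$, $V(x):=\lim_{t\to T^\ast}v(t,x)$ exist in $(0,\infty]$, are radially nonincreasing, and $U(0)+V(0)=\infty$ because $T^\ast<\infty$. The asserted bounds are trivial at radii where $U$ or $V$ is infinite, so one may assume $U,V<\infty$ on $(0,R)$; then $u(t,\cdot)\le U(\rho_0)$, $v(t,\cdot)\le V(\rho_0)$ on $\{|x|\ge\rho_0\}$, interior parabolic estimates apply away from $0$, and monotone convergence gives $(u(t),v(t))\to(U,V)$ in $C^2_{\mathrm{loc}}(\overline\Omega\setminus\{0\})$, so $(U,V)$ is a classical positive radial solution of $-\delta\Delta U=F(U,V)$, $-\Delta V=G(U,V)$ in $\Omega\setminus\{0\}$ with $U=V=0$ on $\partial\Omega$. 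The singularity is non-removable: were $U$ bounded near $0$, it would be a bounded superharmonic function on a punctured ball (as $F\ge c_1V^p\ge0$), hence extend continuously across $0$, and Dini's theorem would give $u(t,\cdot)\to U$ uniformly near $0$, contradicting $u(t,0)\to\infty$; if instead $V(0)=\infty$, argue symmetrically for $V$ first and then deduce $U(\rho)\to\infty$ from $-\delta\Delta U\ge c_1V^p\to\infty$. Thus $U(\rho),V(\rho)\to\infty$ as $\rho\to0$.

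\textbf{Step 2: the two ingredients.} \emph{Lower blow-up rate.} Testing the equations at $x=0$, where $\Delta u(t,0),\Delta v(t,0)\le0$ by radial monotonicity, gives $M_u'\le c_2(M_v^p+M_u^r+1)$, $M_v'\le c_2(M_u^q+M_v^s+1)$ for $M_u(t)=u(t,0)$, $M_v(t)=v(t,0)$; since $M_u,M_v\to\infty$ and, by (\ref{controle1b}), the exponents match scaling ($\alpha r=\beta p=\alpha+1$, $\alpha q=\beta+1$, $\beta p-\alpha=1$), ODE comparison yields $M_u(t)\ge c_0(T^\ast-t)^{-\alpha}$, $M_v(t)\ge c_0(T^\ast-t)^{-\beta}$. \emph{Upper profile bounds.} The gradient-functional machinery behind Theorems~\ref{59a}--\ref{59} — an inequality $J\le0$ for $J=u_\rho+c(\rho)\mathcal F(u,v)$, which integrates in $\rho$ — yields $u(t,\rho)\le C\rho^{-2\alpha}$, $v(t,\rho)\le C\rho^{-2\beta}$ uniformly in $t<T^\ast$; I take these as available under the present hypotheses, so $U(\rho)\le C\rho^{-2\alpha}$, $V(\rho)\le C\rho^{-2\beta}$. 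Writing $U=\rho^{-2\alpha}\phi(z)$, $V=\rho^{-2\beta}\psi(z)$ with $z=-\log\rho$, these combine to show that $\phi,\psi\ge0$ are bounded and solve an autonomous second-order system whose right-hand sides are $\le-\delta^{-1}c_1\psi^p$ and $\le-c_1\phi^q$ respectively (again by (\ref{controle1b}), to match powers), with the monotonicity constraints $\phi'+2\alpha\phi\ge0$, $\psi'+2\beta\psi\ge0$.

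\textbf{Step 3 (core) and main obstacle.} Since $u(T^\ast,x)\ge u(t,x)$, it suffices to prove $\liminf_{z\to\infty}\phi>0$, $\liminf_{z\to\infty}\psi>0$. A standard argument (using $\phi,\psi\ge0$, the sign of the right-hand sides, and the monotonicity constraints) shows that the bounded trajectory $(\phi,\psi,\phi',\psi')$ either stays bounded away from the hyperplanes $\{\phi=0\}$, $\{\psi=0\}$ — whence it is attracted to the unique positive equilibrium and we are done — or else $\phi(z),\psi(z)\to0$, i.e.\ $\rho^{2\alpha}U(\rho)\to0$ and $\rho^{2\beta}V(\rho)\to0$ as $\rho\to0$. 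It thus remains to rule out the latter, which is where the lower blow-up rate enters, through the elementary consequence of $u_t\ge0$ obtained by integrating $(\rho^{n-1}u_\rho)_\rho\ge-\delta^{-1}\rho^{n-1}F(u,v)$ twice:
\[
u(t,\rho_0)\ \ge\ M_u(t)\ -\ C\!\int_0^{\rho_0}\!\sigma\bigl(v(t,\sigma)^p+u(t,\sigma)^r+1\bigr)\,d\sigma .
\]
Inserting the upper profile bounds (and splitting the integral at the radii where the two bounds for $u,v$ cross), this must be made to yield $u(t,c\sqrt{T^\ast-t})\gtrsim(T^\ast-t)^{-\alpha}$, hence $U(\rho)\gtrsim\rho^{-2\alpha}$ along $\rho\sim\sqrt{T^\ast-t}$, contradicting $\rho^{2\alpha}U\to0$. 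In \cite{souplet} the error term above is absorbed into $\tfrac12M_u(t)$ merely by taking $T^\ast-t=\Lambda\rho_0^2$ with $\Lambda$ large, since the type~I estimate (\ref{7}) makes it $O((T^\ast-t)^{-\alpha-1})$; \textbf{without (\ref{7}) the control of this error term is the main obstacle}, and has to be carried out using only the one-sided bounds (\ref{controle})--(\ref{controle1b}), the cooperativity (\ref{monotone}), the upper profile bounds, and the radial and time monotonicities — and it is precisely here that allowing arbitrary $\delta>0$ makes the argument considerably more delicate than in \cite{souplet}. The estimate for $v$ follows symmetrically with $(p,\alpha)$ and $(q,\beta)$ interchanged, and one concludes after adjusting $\eps_0,\eps_1$.
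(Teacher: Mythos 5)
Your proposal is not a complete proof: it has two genuine gaps, and the second one you flag yourself. First, in Step 2 you ``take as available'' the uniform upper profile bounds $u(t,\rho)\le C\rho^{-2\alpha}$, $v(t,\rho)\le C\rho^{-2\beta}$. These are not established anywhere in the paper, and they are not obviously obtainable under the hypotheses of Theorem~\ref{69}, which are strictly weaker than those of Theorem~\ref{59}: condition (\ref{controle2}) is not assumed, and only $p,q\ge1$ is required. Moreover, the $J\le0$ machinery of Section~2 uses the localized cutoff $c(\rho)=\sin^2(\cdot)$ supported in an annulus $[\rho_1,\rho_2]$ away from the origin; integrating $-u_\rho\ge\eps c(\rho)u^\gamma$ over that annulus yields boundedness of $u(t,\rho_2)$, not a quantitative rate $\rho^{-2\alpha}$ down to $\rho=0$. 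Second, your Step 3 is explicitly left open: you state that controlling the error term $\int_0^{\rho_0}\sigma(v^p+u^r+1)\,d\sigma$ without (\ref{7}) ``is the main obstacle'' and ``has to be carried out'' --- but you do not carry it out. In addition, the dichotomy you invoke for the trajectory $(\phi,\psi,\phi',\psi')$ (either bounded away from $\{\phi=0\}\cup\{\psi=0\}$ and attracted to ``the unique positive equilibrium'', or tending to zero) is unjustified for a system of differential \emph{inequalities} with general $F,G$: there need be no unique equilibrium and no such classification; the paper explicitly avoids this kind of classification precisely because it is unavailable in the general setting.

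For comparison, the paper's actual proof is much shorter and bypasses all of this. It first proves Proposition~\ref{69b}, a comparison $\sup_{Q_t}u^{(q+1)/(p+1)}\le C\sup_{Q_t}v$ (and symmetrically) by a rescaling/compactness contradiction argument, which at $x=0$ gives $v^p(t,0)\le Cu^r(t,0)$ and $u^q(t,0)\le Cv^s(t,0)$. Then, using only $v_t\ge0$ and $u_\rho,v_\rho\le0$, it shows that the Friedman--McLeod-type functional $\frac12 v_\rho^2+c_2v(u^q+v^s+1)$ is nonincreasing in $\rho$, whence $\|v_\rho(t)\|_\infty\le Cv^{1+1/(2\beta)}(t,0)$; the standard intersection argument (integrate from $0$ to $\rho$ and choose $t=t(\rho)$ so that $v(t,0)\sim\rho^{-2\beta}$, then use $v_t\ge0$) yields $v(T^\ast,|x|)\ge\eps_0|x|^{-2\beta}$. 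No blow-up rate estimate, no upper profile bound, and no dynamical-systems analysis of the limit profile is needed. If you want to salvage your approach, the missing ingredients are precisely a proof of the upper profile bounds under the weaker hypotheses and a rigorous treatment of the limit system; the paper's gradient-functional route avoids both.
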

\setcounter{re}{1}
\begin{res}\rm{
\item [$(i)$] The results of Theorems \ref{59} and  \ref{69} remain true for the Cauchy
problem (that is, (\ref{1}) with $R=\infty$ and $\partial\Omega=\emptyset$) provided $u_0,$ $v_0$ are
not both constant. These follow from
simple modifications
of the proofs.
\item [$(ii)$] Concerning Theorem \ref{69}, we note that the existence of a positive, radially symmetric, radially
nonincreasing and classical solution of (\ref{1}) such that $T^\ast<\infty$ and $u_t,
v_t\geq0$, can be obtained for initial data $(\lambda u_0,\,\lambda
v_0)$ with $\lambda>0$ large enough, whenever $u_0, v_0$ satisfy~(\ref{initialdata}) and
\begin{equation*}
\left\{
  \begin{array}{ll}
    \mbox{$u_0,\,v_0\in C^2(\Omega)\cap C(\overline\Omega),\quad  u_0=v_0=0$ on $\partial\Omega$}, & \hbox{ } \\
    \noalign{\vskip 1mm}
    \delta\Delta u_0+F(u_0,\,v_0)\geq0, \quad 
    \Delta v_0+G(u_0,\,v_0)\geq0\quad\mbox{ in $\Omega$}. & \hbox{ }
  \end{array}
\right.
\end{equation*}
See \cite{tayachi}.}
\end{res}

\subsection{Outline of proof}
 As in \cite{giga, souplet} (and cf. \cite{friedman, chen}),  the basic idea for proving single-point blow-up
 is to consider auxiliary functions $J,\,\overline{J}$, either of the form (cf.~\cite{giga}):
\begin{equation}\label{choiceJgiga}
J(t,\,\rho)=u_\rho+\eps c(\rho)u^\gamma,\quad
\overline{J}(t,\,\rho)=v_\rho+\eps \overline c(\rho) v^{\overline{\gamma}},
\end{equation}
or (cf.~\cite{souplet}):
\begin{equation}\label{choiceJsouplet}
J(t,\,\rho)=u_\rho+\eps c(\rho)v^\gamma,\quad
\overline{J}(t,\,\rho)=v_\rho+\eps \overline c(\rho) u^{\overline{\gamma}},
\end{equation}
with suitable constants $\gamma, \overline{\gamma}>1$, $\eps>0$ and functions $c(\rho), \overline c(\rho)$. The couple $(J,\,\overline{J})$ satisfies a system of parabolic inequalities
to which one aims at applying the maximum principle, so as to deduce that $J, \overline{J}\leq 0$.
By integrating these inequalities in space, one then obtains
upper bounds on $u$ and $v$ which guarantee single-point blowup at the origin.

However, in the case of systems, such a procedure turns out to require good comparison properties between $u$ and $v$.
Due to the global comparison properties employed in \cite{giga},
the result there for system~(\ref{1b}) imposed the severe restriction $p=q$
(as well as $\delta=1$, because this comparison was shown by applying the maximum principle 
to a linear combination of $u$ and $v$).
For type~I blowup, radially decreasing solutions of~(\ref{1b}) with $\delta=1$ and any $p,q>1$, 
this was overcome in \cite{souplet} by applying a different strategy.
Instead of looking for comparison properties valid everywhere,
one assumed for contradiction that (type~I) single-point blow-up fails and then
established sharp asymptotic estimates near blowup points.
Namely, it was shown that, if $\rho_0>0$ is a blow-up point, then
\begin{equation}\label{estimA0B0}
\lim_{t\to T^\ast}(T^\ast-t)^\alpha u(t,\,\rho)=A_0,\quad \lim_{t\to T^\ast}(T^\ast-t)^\beta u(t,\,\rho)=B_0
\end{equation}
uniformly on compact subsets of $[0,\rho_0)$,
for some uniquely determined constants $A_0, B_0>0$, hence in particular the comparison property
$$\lim_{t\to T^\ast} \Bigl[\frac{u^{p+1}}{v^{q+1}}\Bigr](t,\,\rho)=A_0^{p+1}B_0^{-(q+1)}.$$
These estimates turned out to be sufficient to handle the system satisfied 
by suitable functions of the form $J,\, \overline{J}$ in~(\ref{choiceJsouplet}).
As for estimate (\ref{estimA0B0}), its proof in \cite{souplet} was long and technical, using similarity variables, 
delayed smoothing effects for rescaled solutions, monotonicity arguments and a precise classification
of entire solutions of a related ODE system.

Although we here follow the same basic strategy as in \cite{souplet}, we have been able to make
the method much more flexible, leading to the improvements mentioned above, owing to a number of new ideas,
which we now describe.

(i) An important observation, improving on \cite{souplet}, is that the proof that $J,\, \overline{J}\le 0$ can be reduced to a weaker property than (\ref{estimA0B0}), namely:
\begin{equation}\label{estimC1C2}
\left\{
  \begin{array}{ll}
C_1\le (T^\ast-t)^\alpha u(t,\,\rho)\le C_2\\
\noalign{\vskip 1mm}
C_1\le (T^\ast-t)^\beta v(t,\,\rho)\le C_2\\
      \end{array}
\right.
 \quad\hbox{ in $[T^\ast/2,T^\ast)\times [\rho_1,\,\rho_2]$,}
\end{equation}
for some $0<\rho_1<\rho_2<\rho_0$ and some (unrestricted) constants $C_1, C_2>0$.
Defining $J,\, \overline{J}$ by~(\ref{choiceJgiga}) instead of (\ref{choiceJsouplet}), and localizing the function $c(\rho)$,
this can be achieved by {\it choosing $\gamma,\bar\gamma>1$ suitably close to $1$}
(see Section~2).
  
(ii) Even though the global type~I estimate (\ref{7}) is
unknown in general or may fail, the following local type~I estimate, away from
the origin, can be proved for radially decreasing solutions of the general system~(\ref{1}):
\begin{equation}\label{OutlineUpper}
u(t,\,\rho)\leq C\,\rho^{-n}\,(T^\ast-t)^{-\alpha} \quad\mbox{and}\quad v(t,\,\rho)\leq C\,\rho^{-n}\,(T^\ast-t)^{-\beta}.
\end{equation}
See Proposition~\ref{65}. This is a rather easy consequence of Kaplan's eigenfunction method. This yields in particular
the upper part of the bounds in (\ref{estimC1C2}).

(iii) As for the more delicate lower bounds in (\ref{estimC1C2}), they are proved in three steps. 
The first step (Proposition~\ref{15})  is to establish a nondegeneracy property
which guarantees that $\rho_0\in (0,R)$ is not a blowup point whenever
\begin{equation}\label{OutlineLowerStep1}
(T^\ast-t)^\alpha u(t,\,\rho) \le \eta \quad\hbox{\bf and}\quad (T^\ast-t)^\beta v(t,\,\rho)\le\eta
\end{equation}
at some time $t$ and some $\rho\in(0,\rho_0)$ with $\eta>0$ sufficiently small.
As in \cite{souplet}, the idea is to work in similarity variables and to use delayed smoothing effects,
adapting arguments from~\cite{HVihp, aundrossi}. 
However, a new difficulty arises due to the lack of global type~I upper estimate on $(u,v)$, 
hence of global bound on the rescaled solution.
This is overcome, after truncating the domain, by carefully comparing with a modified solution.
The latter is obtained by
a suitable reflection and supersolution procedure, taking advantage of the local upper bound in (\ref{OutlineUpper})
(see step~1 of the proof of Proposition~\ref{15}).
After passing to similarity variables, the modified solution is now uniformly bounded,
but at the expense of additional terms, generated by the reflection procedure, which appear in the PDE's.
However, these terms can be localized exponentially far away in space for large time,
and thus taken care of in the smoothing effect arguments.


(iv) As a second step in the proof of the lower bounds in (\ref{estimC1C2}),
we prove (see Section~5)  that solutions rescaled around a blow-up point behave, in a suitable sense,
like a continuous distribution solution of the following system of ordinary differential inequalities (ODI):
 \begin{equation}\label{systODI}
 \left\{
  \begin{array}{ll}
    \phi'+\alpha \phi\ge c_1\psi^p, & \hbox{ } \\
    \psi'+\beta \psi\ge c_1\phi^q, & \hbox{ }
      \end{array}
\right.
\end{equation}
on $(-\infty,\infty)$. 
This is proved by a further use of similarity variables, along with the space monotonicity.
Moreover, we single out a simple but crucial property of local interpendence of components 
for such solutions of (\ref{systODI}); namely, $\phi(0)=0$ if and only if $\psi(0)=0$.

(v) Then, as a last step (Section~6), we show that, if {\it one} of the lower bounds in (\ref{estimC1C2}) is violated,
then, owing to point (iv), we have convergence of rescaled solutions to a solution of (\ref{systODI})
such that $\phi(0)=0$ {\it and} $\psi(0)=0$. Restated in terms in $(u,v)$, this leads to
the degeneracy condition (\ref{OutlineLowerStep1}) at some time $t$. But,
in view of point (iii),
this contradicts $\rho_0$ being a blowup point.
\smallskip

We note that, in \cite{souplet}, the study of the particular system~(\ref{1b}) led to the
system of equalities 
\begin{equation}\label{27a}
 \left\{
  \begin{array}{ll}
    \phi'+\alpha \phi= c_1\psi^p, & \hbox{ } \\
    \psi'+\beta \psi= c_1\phi^q, & \hbox{ }
      \end{array}
\right.
\end{equation}
instead of (\ref{systODI}), and a complete classification of entire solutions of~(\ref{27a}) was obtained,
which enabled one to deduce the more precise behavior (\ref{estimA0B0}) at the left of an alleged nonzero blowup point.
We stress that, thanks to the new possibility of arguing through the weaker estimates (\ref{estimC1C2}),
we can now avoid such a classification
(which is not available for the general system~(\ref{systODI})).

\smallskip

The organization of the rest of this paper is as follows. In Section
2, we prove Theorem~\ref{59} (hence Theorem~\ref{59a})
assuming the local upper and lower type~I estimates (\ref{estimC1C2}) near blow-up points.
Sections 3-6 are next devoted to proving these estimates.
In Section~3, we establish upper blowup estimates away from the origin (Proposition \ref{65}). 
In Section~4 we prove the key nondegeneracy property Proposition~\ref{15}.
In Section~5 we show the ODI behavior for rescaled solutions and
the local interpendence of components for the ODI system.
In Section~6 we then prove the lower bounds in (\ref{estimC1C2}) by using a contradiction argument 
and the results of Sections 3-5.
Finally, in Section~7, we establish the
pointwise lower bounds on the blow-up profiles, i.e., Theorem~\ref{69},
and we verify the assertions in Examples~\ref{59bEx}.


\section{Proof of Theorem~\ref{59}
assuming local upper and lower type~I estimates}

\setcounter{equation}{0}

The local upper and lower type~I estimates, in case of existence of nonzero blow-up points,
are formulated in the following proposition.

\begin{pro}\label{30}
Let $\Omega= B(0,\,R),$ $p,\,q>1$, $\delta >0$.
Assume (\ref{initialdata})--(\ref{monotone}) and (\ref{controle})--(\ref{controle1b})
 and let the solution $(u,\,v)$ of (\ref{1}) satisfy $T^\ast<\infty$.
Assume that there exists $\rho_0\in(0,\,R)$ such that
$$\underset{t\rightarrow
T^\ast}{\limsup}\,\big{(}u(t,\,\rho_0)+v(t,\,\rho_0)\big{)}=\infty$$
and let $[\rho_1,\,\rho_2]$ be a compact subinterval of $(0,\,\rho_0).$ Then, there exist 
constants $C_1,\,C_2>0$ (possibly depending on the solution $(u,v)$ and
on $\rho_0, \rho_1, \rho_2$), such that
\begin{equation}\label{310}
C_1\le (T^\ast-t)^\alpha u(t,\,\rho)\le C_2\quad\mbox{on}\quad[T^\ast/2,\,T^\ast)\times[\rho_1,\,\rho_2]
\end{equation}
and
\begin{equation}\label{311}
C_1\le (T^\ast-t)^\beta v(t,\,\rho)\le C_2\quad\mbox{on}\quad[T^\ast/2,\,T^\ast)\times[\rho_1,\,\rho_2].
\end{equation}
In particular, there exist $C_1', C_2'>0$ such that
\begin{equation}\label{31}
C'_1\leq\frac{u^{q+1}(t,\,\rho)}{v^{p+1}(t,\,\rho)}\leq C'_2\quad\mbox{on}\quad[T^\ast/2,\,T^\ast)\times[\rho_1,\,\rho_2].
\end{equation}
\end{pro}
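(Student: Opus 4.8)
The plan is to prove the two-sided estimates (\ref{310})--(\ref{311}) by treating the upper and lower halves separately; the upper ones are comparatively routine, while the lower ones are the heart of the matter. The inequality (\ref{31}) then follows immediately, since the algebraic identity $\alpha(q+1)=\beta(p+1)$ makes the powers of $T^\ast-t$ cancel in $u^{q+1}/v^{p+1}$; one may take $C_1'=C_1^{q+1}C_2^{-(p+1)}$ and $C_2'=C_2^{q+1}C_1^{-(p+1)}$.

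\textit{Upper bounds.} First I would establish the pointwise blow-up estimates away from the origin, namely $u(t,\rho)\le C\rho^{-n}(T^\ast-t)^{-\alpha}$ and $v(t,\rho)\le C\rho^{-n}(T^\ast-t)^{-\beta}$ (this is Proposition~\ref{65}, cf.\ (\ref{OutlineUpper})), by Kaplan's eigenfunction method. Given a ball $B'=B(\rho,r)$ with $\overline{B'}\subset\Omega\setminus\{0\}$, let $\varphi\ge 0$ be its first Dirichlet eigenfunction, normalized by $\int_{B'}\varphi=1$, with eigenvalue $\lambda_1\asymp r^{-2}$, and set $a(t)=\int_{B'}u\varphi$, $b(t)=\int_{B'}v\varphi$. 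Using $\varphi=0$ on $\partial B'$ together with $u,v\ge 0$ (so that the boundary terms have the right sign), the lower bounds $F\ge c_1 v^p$ and $G\ge c_1 u^q$ from (\ref{controle})--(\ref{controle1}), and Jensen's inequality, one gets $a'\ge -\delta\lambda_1 a+c_1 b^p$ and $b'\ge -\lambda_1 b+c_1 a^q$. Comparison with explicit blowing-up solutions of the model ODE system $a'=c_1 b^p$, $b'=c_1 a^q$ (whose blow-up time from data $(a_0,b_0)$ is of order $\min\{a_0^{-1/\alpha},b_0^{-1/\beta}\}$, in view of the conserved quantity $(p+1)a^{q+1}-(q+1)b^{p+1}$ and the exponent $r$ in (\ref{controle1b})) forces $a(t_0)\le C(T^\ast-t_0)^{-\alpha}$ and $b(t_0)\le C(T^\ast-t_0)^{-\beta}$; otherwise $(a,b)$, and hence $(u,v)$, would blow up before $T^\ast$. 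The pointwise bounds then follow from the radial monotonicity (\ref{monot}) and the scaling $r\asymp\rho$, and restricting to $\rho\in[\rho_1,\rho_2]$ yields the upper halves of (\ref{310})--(\ref{311}), with $C_2$ depending on $\rho_1$. (The hypothesis on $\rho_0$ is not used here.)

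\textit{Lower bounds.} By the radial monotonicity (\ref{monot}), it suffices to bound $(T^\ast-t)^\alpha u(t,\rho_2)$ and $(T^\ast-t)^\beta v(t,\rho_2)$ from below for $t$ close to $T^\ast$, the remaining compact part of $[T^\ast/2,T^\ast)\times[\rho_1,\rho_2]$ being covered by continuity and strict positivity of $(u,v)$ for $t>0$. I would argue by contradiction, assuming (the failure of the $v$-bound being symmetric) that $(T^\ast-t_k)^\alpha u(t_k,\rho_2)\to 0$ along some $t_k\uparrow T^\ast$, and run a three-step mechanism. \textit{(a) Nondegeneracy.} Prove (this is Proposition~\ref{15}) that $\rho_0$ cannot be a blow-up point once $(T^\ast-t)^\alpha u(t,\rho)\le\eta$ and $(T^\ast-t)^\beta v(t,\rho)\le\eta$ both hold at some time $t$ and some $\rho\in(0,\rho_0)$, with $\eta>0$ small. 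One passes to similarity variables centered at the origin and exploits delayed smoothing effects for the rescaled system, in the spirit of \cite{HVihp,aundrossi}; the missing global type~I bound is bypassed by truncating the domain and comparing $(u,v)$ with a modified solution obtained via reflection and a supersolution construction which, thanks to the local upper bound (\ref{OutlineUpper}), is globally bounded after rescaling, at the price of extra PDE terms that are exponentially localized far from the origin for large similarity time and hence harmless. \textit{(b) ODI limit.} Show that rescalings of $(u,v)$ localized near the radius $\rho_2$ converge (using the upper bounds just obtained for compactness, and the space monotonicity (\ref{monot}) to absorb or control the spatial-derivative terms, the exponent identities $\alpha+1=\beta p$ and $\beta+1=\alpha q$ making the powers of $T^\ast-t_k$ match) to a pair $(\phi,\psi)$, continuous and nonnegative on $\mathbb{R}$, that solves the ODI system (\ref{systODI}) in the distributional sense; the crucial structural fact, proved by a short comparison within (\ref{systODI}), is that any such solution satisfies $\phi(0)=0\iff\psi(0)=0$. \textit{(c) Conclusion.} The normalization gives $\phi(0)=0$, hence $\psi(0)=0$ by step (b), i.e.\ $(T^\ast-t_k)^\beta v(t_k,\rho_2)\to 0$. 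Thus for $k$ large both $(T^\ast-t_k)^\alpha u(t_k,\rho_2)\le\eta$ and $(T^\ast-t_k)^\beta v(t_k,\rho_2)\le\eta$ at $\rho_2\in(0,\rho_0)$, contradicting step (a).

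The main obstacle lies in ingredients (a) and (b): proving the nondegeneracy estimate in the absence of a global type~I bound (the reflection and supersolution comparison performed in similarity variables), and extracting the clean distributional ODI system (\ref{systODI}), together with its interdependence property, from the rescaled PDEs, a reduction that is delicate precisely because no equidiffusivity is assumed ($\delta\ne 1$ is allowed). These are the points where the new ideas of the paper are needed.
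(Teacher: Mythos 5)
Your proposal is correct in outline and follows essentially the same route as the paper: upper bounds via Kaplan's eigenfunction method (Proposition~\ref{65}), and lower bounds by contradiction, combining the nondegeneracy criterion (Proposition~\ref{15}) with the convergence of rescaled solutions to the ODI system and the interdependence property $\phi(0)=0\iff\psi(0)=0$ (Proposition~\ref{30lem}). One small slip in step (c): $\psi(0)=0$ yields smallness of $(T^\ast-t_k)^\beta v$ at the shifted point $\rho_2+\theta_0\sqrt{T^\ast-t_k}$, not at $\rho_2$ itself, but this point still lies in $(0,\rho_0)$ for large $k$ (and smallness of $u$ transfers there by radial monotonicity), which is exactly how the paper invokes its nondegeneracy criterion, so the argument stands.
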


As already explained in Section~1.2, the proof of Proposition~\ref{30} 
will be developed in Sections~3-6, and we shall now prove Theorem~\ref{59} assuming Proposition~\ref{30}.

We introduce the auxiliary $J,\, \overline{J}$ functions defined by
\begin{equation}\label{47}
J(t,\,\rho)=u_\rho+\eps c(\rho)u^\gamma,\quad
\overline{J}(t,\,\rho)=v_\rho+\eps \overline c(\rho)v^{\overline{\gamma}},
\end{equation}
with
\begin{equation}\label{48}
c(\rho)=\sin^2\left(
\frac{\pi(\rho-\rho_1)}{\rho_2-\rho_1}\right),\quad
\overline c(\rho)=\kappa\,c(\rho),\quad \rho_1\leq \rho\leq \rho_2,  
\end{equation}
where $\gamma$, $\overline{\gamma}>1$ and $\eps$, $\kappa$,
$\rho_2>\rho_1>0$ are to be fixed. We note that 
$J$, $\overline{J}$ $\in C((0,\,T^\ast)\times[0,\,R])\cap
W^{1,2;k}_{loc}((0,\,T^{\ast})\times[0,\,R))$, for all $1<k<\infty$,
 by parabolic $L^p$-regularity.

\begin{lem}\label{49}
 Under the hypotheses of Theorem~\ref{59},
assume that there exists $\rho_0\in(0,\,R)$ such that
$$\underset{t\rightarrow
T^\ast}{\limsup}\,\big{(}u(t,\,\rho_0)+v(t,\,\rho_0)\big{)}=\infty$$
and let $\rho_1=\rho_0/4$ and $\rho_2=\rho_0/2$.
Then there exist $\gamma$,
$\overline{\gamma}>1$, $\kappa>0$ and $T_1 \in(0,\,T^\ast)$, such that,
for any $\eps\in(0,\,1]$,
 the functions $J$ and $\overline{J}$ defined in (\ref{47})--(\ref{48})
satisfy
\begin{equation}\label{50}
    \hspace{-0,5cm}\left\{
      \begin{array}{lll}
           \hspace{-0,2cm}\hfill J_t-\delta J_{\rho\rho}-\delta\displaystyle\frac{n-1}{\rho}J_\rho+\delta\frac{n-1}{\rho^2}J&\leq F_v(u,\,v)\overline{J}
 +\Big{[}F_u(u,\,v)-2\eps\delta\gamma c'u^{\gamma-1}\Big{]}J, & \hbox{ }\\
      \hspace{-0,2cm}\hfill \overline{J}_t-\overline{J}_{\rho\rho}-\displaystyle\frac{n-1}{\rho}\overline{J}_\rho+\frac{n-1}{\rho^2}\overline{J}
        &\leq G_u(u,\,v)J+\Big{[}G_v(u,\,v)-2\eps\overline{\gamma} \,\overline c'v^{\overline{\gamma}-1}\Big{]}\overline{J}, &\hbox{ }
      \end{array}
    \right.
\end{equation}
for a.e. $(t,x)\in[T_1,\,T^\ast)\times(\rho_1,\, \rho_2 ).$
\end{lem}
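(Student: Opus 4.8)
The plan is to derive the parabolic system \eqref{50} by a direct computation: differentiate the PDEs \eqref{1} with respect to $\rho$ to obtain equations for $u_\rho$ and $v_\rho$, then combine with the chain-rule identities for $\eps c u^\gamma$ and $\eps\overline c v^{\overline\gamma}$. First I would record that, for a radial function $w=w(t,\rho)$, $(\Delta w)_\rho = w_{\rho\rho\rho}+\frac{n-1}{\rho}w_{\rho\rho}-\frac{n-1}{\rho^2}w_\rho = (w_\rho)_{\rho\rho}+\frac{n-1}{\rho}(w_\rho)_\rho-\frac{n-1}{\rho^2}w_\rho$, which explains the appearance of the zeroth-order term $+\delta\frac{n-1}{\rho^2}$ (resp. $+\frac{n-1}{\rho^2}$) on the left of \eqref{50}. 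Differentiating the $u$-equation in $\rho$ gives $(u_\rho)_t-\delta(u_\rho)_{\rho\rho}-\delta\frac{n-1}{\rho}(u_\rho)_\rho+\delta\frac{n-1}{\rho^2}u_\rho = F_u u_\rho+F_v v_\rho$, and symmetrically for $v_\rho$. Writing $u_\rho=J-\eps c u^\gamma$, $v_\rho=\overline J-\eps\overline c v^{\overline\gamma}$ on the right-hand side will produce the terms $F_u J+F_v\overline J$ plus correction terms $-\eps F_u c u^\gamma-\eps F_v\overline c v^{\overline\gamma}$.

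Next I would compute the parabolic operator applied to the perturbation $\eps c(\rho) u^\gamma$. Here $\partial_t(\eps c u^\gamma)=\eps\gamma c u^{\gamma-1}u_t$, and the spatial part expands, using $u_t-\delta\Delta u=F$, into $\eps\gamma c u^{\gamma-1}F$ together with terms involving $c',c''$, $u_\rho$, $u_{\rho}^2$ and the factor $\gamma(\gamma-1)$. The crucial algebraic point is that the leading interior term is $\eps\gamma c u^{\gamma-1}\bigl(u_t-\delta\Delta u\bigr)=\eps\gamma c u^{\gamma-1}F$; when added to $-\eps F_u c u^\gamma$ from the previous paragraph this yields $\eps c u^{\gamma-1}\bigl(\gamma F-u F_u\bigr)$, and the first inequality in assumption \eqref{controle2} (with $\gamma=1+\mu$, which forces $\gamma$ close to $1$) makes this quantity $\le 0$ on the set where $C_1\le u^{q+1}/v^{p+1}\le C_2$ — this is exactly where Proposition~\ref{30}, via \eqref{31}, enters, since it guarantees $(u,v)$ lies in $D$ on $[T^\ast/2,T^\ast)\times[\rho_1,\rho_2]$ once $\rho_1=\rho_0/4$, $\rho_2=\rho_0/2$ are fixed and we shrink to times $t\ge T_1$ (ensuring also $u,v\ge A$ there, using that $\rho_0$ is a blow-up point together with a lower bound near it). The remaining correction terms $-\eps F_v\overline c v^{\overline\gamma}$ must be absorbed: here I would use the cooperativity $F_v\ge 0$ and the cross-term $\kappa_1 v F_v$ appearing on the right of \eqref{controle2}, choosing $\kappa=\overline c/c$ so that $-\eps F_v\overline c v^{\overline\gamma}$ combines with a portion of $F_v\overline J$; more precisely, writing $\overline J=v_\rho+\eps\overline c v^{\overline\gamma}\le\eps\overline c v^{\overline\gamma}$ is the wrong sign, so instead one keeps $F_v\overline J$ as is and checks that the genuine leftover, of the form $\eps c u^{\gamma-1}\bigl((1+\mu)F-uF_u-\kappa_1 v F_v\bigr)$ after also using $\overline c v^{\overline\gamma}$ bounded by a constant times $c u^{\gamma-1} v$ on $D$ (valid because $u^{q+1}\asymp v^{p+1}$ forces $u^{\gamma-1}v\asymp u^{\gamma-1}u^{(q+1)/(p+1)}$, and one matches exponents by the relation between $\gamma,\overline\gamma$), is $\le 0$.

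The bookkeeping for the $c',c''$ terms is routine: the $c''$ and $\frac{n-1}{\rho}c'$ contributions are bounded by $C\eps u^\gamma$ and, since $u\to\infty$ on $[\rho_1,\rho_2]$ as $t\to T^\ast$ (again by Proposition~\ref{30}, the lower bound \eqref{310}), they are dominated by the strictly negative principal term $\eps c u^{\gamma-1}(\gamma F-uF_u-\dots)\le -c_0\eps c u^{\gamma-1+p'}$ for large $t$; this is precisely what fixes $T_1$. The only term deliberately kept on the right of \eqref{50} is $-2\eps\delta\gamma c' u^{\gamma-1}J$, which comes from the cross term $-2\delta\gamma c u^{\gamma-1}u_{\rho\rho}$ rewritten via $u_{\rho\rho}=(u_\rho)_\rho$ and $u_\rho=J-\eps c u^\gamma$; one keeps the $J$-proportional part as a coefficient and absorbs the $-\eps c u^\gamma$ part into the negative principal term as above (here one also needs $\gamma(\gamma-1)$ and the $u_\rho^2$ term, both handled by $u\to\infty$ and the sign structure).

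\textbf{Main obstacle.} The delicate point is not any single estimate but the simultaneous choice of the four parameters $\gamma,\overline\gamma,\kappa$ and $T_1$: one must pick $\gamma=1+\mu$, $\overline\gamma=1+\mu$ (with the $\mu$ from \eqref{controle2} applied to the constants $C_1',C_2'$ of \eqref{31}), verify that the exponent matching needed to absorb the cross term $\overline c v^{\overline\gamma}$ into $c u^{\gamma-1}\cdot(\text{something}\le\kappa_1 vF_v)$ is compatible with $\kappa_1\kappa_2<1$ (this is where the product condition is used, ensuring the coupled system \eqref{50} can later be closed by the maximum principle), and only then use blow-up at $\rho_0$ plus the lower bound \eqref{310}--\eqref{311} to choose $T_1$ so that all error terms are swamped. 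I expect the symmetric treatment of $\overline J$ via the second inequality in \eqref{controle2} and the cooperativity $G_u\ge 0$ to be entirely parallel, so I would prove the $J$-inequality in detail and indicate that the $\overline J$-inequality follows by exchanging the roles of $(u,F,\delta,p,\alpha)$ and $(v,G,1,q,\beta)$.
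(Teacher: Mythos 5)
Your overall strategy coincides with the paper's: compute the parabolic operator on $J=u_\rho+\eps c\,u^\gamma$ using the PDE, keep the $J$- and $\overline J$-proportional parts as coefficients, and show the leftover remainder is nonpositive for $t\ge T_1$ by combining the two-sided bounds of Proposition~\ref{30} (which place $(u,v)$ in the region $D$ of \eqref{controle2} and let $F\ge c_1v^p$, $G\ge c_1u^q$ swallow the $c',c''$ error terms), assumption \eqref{controle2}, and a choice of $\kappa\in(\kappa_1,1/\kappa_2)$ made possible by $\kappa_1\kappa_2<1$. All of these ingredients appear in your sketch, and the symmetric treatment of $\overline J$ is indeed as you describe.

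There is, however, one concrete step that fails as written: in your final paragraph you commit to $\gamma=\overline\gamma=1+\mu$. The absorption of the cross term requires a \emph{two-sided} bound on the ratio $v^{\overline\gamma-1}/u^{\gamma-1}$ on $[T_1,T^\ast)\times(\rho_1,\rho_2)$, so that $\kappa\,v^{\overline\gamma-1}/u^{\gamma-1}\ge\kappa_1$ and $\kappa^{-1}u^{\gamma-1}/v^{\overline\gamma-1}\ge\kappa_2$ hold simultaneously. By \eqref{310}--\eqref{311} one has $u^{\gamma-1}\asymp(T^\ast-t)^{-\alpha(\gamma-1)}$ and $v^{\overline\gamma-1}\asymp(T^\ast-t)^{-\beta(\overline\gamma-1)}$, so the time factors cancel only if $\alpha(\gamma-1)=\beta(\overline\gamma-1)$, i.e. $\overline\gamma-1=\frac{p+1}{q+1}(\gamma-1)$ as in \eqref{78}; with $\overline\gamma=\gamma$ and $p\neq q$ the ratio tends to $0$ or $\infty$ as $t\to T^\ast$ and one of the two inequalities is violated. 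Relatedly, $\gamma$ cannot simply be set equal to $1+\mu$ for the $\mu$ furnished by \eqref{controle2}: the constants bounding $u^{\gamma-1}/v^{\overline\gamma-1}$ in \eqref{781} only approach $1$ as $\gamma\to1$, so one must take $\gamma>1$ sufficiently close to $1$ (and note that \eqref{controle2} with parameter $\mu$ implies the same with any smaller $\mu'>0$, so $\gamma\le1+\mu$ is not a real constraint). A cosmetic slip: the coefficient $-2\eps\delta\gamma c'u^{\gamma-1}$ of $J$ comes from the cross term $-2\delta\eps c'(u^\gamma)_\rho$ in expanding $(c\,u^\gamma)_{\rho\rho}$, not from $u_{\rho\rho}$.
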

\begin{proof} 
{\bf Step 1.} {\it Computation of a parabolic operator on $J$ and $\bar J$.}
\smallskip

Let $H= u^\gamma$. By differentiation of (\ref{47}), we have
\begin{eqnarray*}
 J_t-\delta J_{\rho\rho}\hspace{-0,6cm}& &=(u_\rho)_t +\eps c H_t-\delta(u_{\rho\rho})_\rho-\delta\eps c''H-2\delta\eps c' H_\rho-\delta\eps c H_{\rho\rho}\\
 \hspace{-0,6cm}& &=(u_t-\delta u_{\rho\rho})_\rho+\eps\Big{(}c\big{(}H_t-\delta H_{\rho\rho}\big{)}-2\delta c'H_\rho-\delta c''H\Big{)}.
\end{eqnarray*}
 By the first equation in (\ref{1}), we get
\begin{eqnarray*}
 (u_t-\delta u_{\rho\rho})_\rho=\left(\delta\frac{n-1}{\rho}u_\rho+F(u,\,v)\right)_\rho=\delta\frac{n-1}{\rho}u_{\rho\rho}-\delta\frac{n-1}{\rho^2}u_\rho+\,F_u u_\rho+F_v v_\rho
\end{eqnarray*}
 and
\begin{eqnarray*}
H_t-\delta H_{\rho\rho}\hspace{-0,6cm}& &=\gamma u^{\gamma-1}u_t-\delta\gamma(\gamma-1)u^{\gamma-2} u^2 _\rho-\delta\gamma u^{\gamma-1}u_{\rho\rho}\\
\hspace{-0,6cm}& &\leq\gamma u^{\gamma-1}\big{(}u_t-\delta u_{\rho\rho}\big{)}
 =\gamma u^{\gamma-1}\left(\delta\frac{n-1}{\rho}u_\rho +\,F \right).
\end{eqnarray*}
Here and in the sequel, we omit the arguments $u,v$ when no confusion may arise.
 Using this, along with
$u_\rho=J-\eps c u^\gamma$ and
$v_\rho=\overline{J}-\eps \overline c v^{\overline{\gamma}}$, we obtain
\begin{eqnarray*}
J_t-\delta J_{\rho\rho}\hspace{-0,6cm}& &\leq\delta\frac{n-1}{\rho}\left(J-\eps cu^\gamma\right)_\rho-\delta\frac{n-1}{\rho^2}\left(J-\eps cu^\gamma\right)\\
 \hspace{-0,6cm}& &\hspace{0,3cm}+\,F_u \left(J-\eps c u^\gamma\right)+F_v \left(\overline{J}-\eps \overline c v^{\overline{\gamma}}\right)\\
 \hspace{-0,6cm}& &\hspace{0,3cm}+\eps u^{\gamma-1}\left[\gamma c\left(\delta\frac{n-1}{\rho}u_\rho +F \right)-2 \gamma\delta c'u_\rho-\delta c''u\right]\\
 \hspace{-0,6cm}& &=\delta\frac{n-1}{\rho}J_\rho-\delta\eps\frac{n-1}{\rho}c'u^\gamma-\delta\eps c\frac{n-1}{\rho}\gamma u^{\gamma-1}u_\rho-\delta\frac{n-1}{\rho^2}J\\
 \hspace{-0,6cm}& &\hspace{0,3cm}+\,\delta \eps \frac{n-1}{\rho^2}c u^\gamma +F_u \left(J-\eps c u^\gamma\right)+F_v \left(\overline{J}-\eps \overline c v^{\overline{\gamma}}\right)\\
 \hspace{-0,6cm}& &\hspace{0,3cm}+\,\eps u^{\gamma-1}\left[\gamma c\left(\delta\frac{n-1}{\rho}u_\rho+F \right)-2 \delta\gamma c'\left(J-\eps c u^{\gamma}\right)-\delta c''u\right].
\end{eqnarray*}
Consequently,
\begin{equation}
 J_t-\delta J_{\rho\rho}-\delta\frac{n-1}{\rho}J_\rho+\delta\frac{n-1}{\rho^2}J\leq F_v \overline{J}
 +\Big{[}F_u -2\eps\delta\gamma c'u^{\gamma-1}\Big{]}J+\eps H_1,\quad \label{52}
\end{equation}
with
$$
 H_1:= - c u^\gamma F_u -\overline c v^{\overline{\gamma}}F_v
 +\,u^{\gamma-1}\left[\gamma cF +2\delta\eps\gamma c'c u^\gamma+\delta u\left(\frac{n-1}{\rho}\Big{(}\frac{c}{\rho}-c'\Big{)}-c''\right)\right].
$$
 For convenience, we set
$$\xi(\rho)=\frac{n-1}{\rho}\Big{(}\frac{1}{\rho}-\frac{c'}{c}\Big{)}-\frac{c''}{c},
\qquad \rho_1<\rho<\rho_2$$
and, on $(0,T^\ast)\times (\rho_1,\rho_2)$,
\begin{equation}
\widetilde{H}_1:=\frac{H_{1}}{c\,u^{\gamma-1}}=-uF_u-\kappa\frac{v^{\overline{\gamma}-1}}{u^{\gamma-1}}vF_v \\
+\gamma F +2\delta\,\eps \gamma c'u^\gamma+ \delta\xi(\rho)u.
\label{53}
\end{equation}
 Note that, up to now, our calculations made use of (\ref{1})
through the first PDE only.
Thus, by replacing $\delta$ with $1$ and exchanging the roles of $u,$ 
$F, \gamma, c$ and $v$, $G, \overline{\gamma}, \overline c$, we get
\begin{multline}
\overline{J}_t-\overline{J}_{\rho\rho}-\frac{n-1}{\rho}\overline{J}_\rho+\frac{n-1}{\rho^2}\overline{J}\leq G_u J+\Big{[}G_v -2\eps\overline{\gamma}\, \overline c'v^{\overline{\gamma}-1}\Big{]}\overline{J}+\eps H_2,
\end{multline}
with
\begin{equation}
\widetilde{H}_{2}:=\frac{H_{2}}{\overline c\,v^{\overline{\gamma}-1}}:=-
vG_v-\frac{1}{\kappa}\frac{u^{\gamma-1}}{v^{\overline{\gamma}-1}}uG_u
+\overline{\gamma}G+2\,\eps \overline{\gamma} \,\overline c'v^{\overline{\gamma}}
+\xi(\rho)v.
\label{55}
\end{equation}

Next setting $\ell=\rho_2-\rho_1=\rho_0/4$, we have
$$-\frac{c'}{c}
 =-\frac{2\pi}{\ell}\cot\Bigl(\frac{\pi(\rho-\rho_1)}{\ell}\Bigr)
\quad\hbox{ and }\quad
 -\frac{c''}{c}=-\frac{2\pi^{2}}{\ell^2}\cot^2\Bigl(\frac{\pi(\rho-\rho_1)}{\ell}\Bigr)+\frac{2\pi^2}{\ell^2}\\
$$
 hence,
\begin{eqnarray*}
\xi(\rho)
=\frac{n-1}{\rho^2}+\frac{2\pi^2}{\ell^2}
- \frac{2\pi}{\ell}\left[\frac{n-1}{\rho}+\frac{\pi}{\ell}
\cot\Bigl(\frac{\pi(\rho-\rho_1)}{\ell}\Bigr)\right]\cot\Bigl(\frac{\pi(\rho-\rho_1)}{\ell}\Bigr).
\end{eqnarray*}
 It follows that
\begin{equation*}
 \xi(\rho)
\ \underset{\rho\rightarrow \rho_1^+}{\longrightarrow} -\infty
\quad\hbox{ and }\quad
\xi(\rho)
\ \underset{\rho\rightarrow \rho_2^-}{\longrightarrow} -\infty.
\end{equation*}
Since  $\xi$
 is continuous on $(\rho_1,\,\rho_2)$, then there exists $C_3=C_3(n,\,\rho_0)>0$ such that
\begin{eqnarray}\label{58}
\xi(\rho)\leq
C_3,\quad\hbox{for all}\,\,\rho\in(\rho_1,\,\rho_2).
\end{eqnarray}
By (\ref{53}), (\ref{55}) and (\ref{58}), we obtain,
for some $C_4=C_4(\delta,\,\rho_0)>0$,
\begin{align}\label{55b}
\widetilde{H}_1\leq -uF_u-\kappa\frac{v^{\overline{\gamma}-1}}{u^{\gamma-1}}vF_v 
+\gamma F+C_4\delta\gamma u^\gamma+ \delta C_3 u
\end{align}
and
\begin{align}\label{55c}
\widetilde{H}_2\leq-vG_v-\frac{1}{\kappa}\frac{u^{\gamma-1}}{v^{\overline{\gamma}-1}}uG_u 
+\overline{\gamma} G
 +C_4\overline\gamma v^{\overline\gamma}+C_3 v.
\end{align}

{\bf Step 2.} {\it Estimation of the remainder terms $\widetilde{H}_{1}, \widetilde{H}_{2}$
with help of the local lower and upper type~I estimates.}

\smallskip

Assume that $\gamma$ satisfies
\begin{eqnarray}\label{56}
& &1<\gamma<p\frac{q+1}{p+1}
\end{eqnarray}
and set
\begin{eqnarray}\label{78}
 & &\overline{\gamma}=1+\frac{p+1}{q+1}(\gamma-1)
\end{eqnarray}
which, in turn, guarantees
\begin{eqnarray}\label{561}
& &1<\overline{\gamma}<q\frac{p+1}{q+1}.
\end{eqnarray}
Let the constants $C_1,\,C_2>0$ be given by Proposition~\ref{30}. 
By (\ref{310})-(\ref{311}), (\ref{78}) and (\ref{62}), we then have
\begin{equation}\label{781}
\begin{array}{ll}
&\Bigl(C_1C_2^{-\frac{p+1}{q+1}}\Bigr)^{\gamma-1} =\displaystyle\frac{ C_1^{\gamma-1}}{C_2^{\bar\gamma-1}}\leq\frac{u^{\gamma-1}}{v^{\overline{\gamma}-1}}\leq 
  \frac{ C_2^{\gamma-1}}{C_1^{\bar\gamma-1}}
  =\Bigl(C_2C_1^{-\frac{p+1}{q+1}}\Bigr)^{\gamma-1}\\
  & \qquad\qquad\qquad\qquad\qquad\qquad\qquad\qquad\qquad \mbox{on}\:[T^\ast/2,\,T^\ast)\times(\rho_1,\,\rho_2).
\end{array}
\end{equation}
Next, by (\ref{310})-(\ref{31}) and assumption (\ref{controle2})
(with $C'_1, C'_2$ in place of $C_1, C_2$), there exist $\kappa_1, \kappa_2, \mu>0$ 
with $\kappa_1\kappa_2<1$ and $T_0\in (T^\ast/2,T^\ast)$, such that 
\begin{eqnarray}\label{uFueta1}
uF_u+\kappa_1vF_v\ge (1+2\mu)F 
\quad\mbox{on}\:[T_0,\,T^\ast)\times(\rho_1,\,\rho_2)
\end{eqnarray}
and
\begin{eqnarray}\label{uFueta2}
vG_v+\kappa_2uG_u\ge (1+2\mu)G
\quad\mbox{on}\:[T_0,\,T^\ast)\times(\rho_1,\,\rho_2).
\end{eqnarray}
Choose $\kappa$ in (\ref{48}) such that $\kappa_1<\kappa<1/\kappa_2$.
Then taking $\gamma>1$ close enough to~$1$, we deduce from (\ref{781}) that
\begin{eqnarray}\label{KK1K2}
\kappa\frac{v^{\overline{\gamma}-1}}{u^{\gamma-1}}\ge \kappa_1
\quad \mbox{and}\quad \frac{1}{\kappa}\frac{u^{\gamma-1}}{v^{\overline{\gamma}-1}}\ge \kappa_2
\quad\mbox{on}\:[T_0,\,T^\ast)\times(\rho_1,\,\rho_2),
\end{eqnarray}
and we may also assume that 
\begin{eqnarray}\label{gammaeta}
\gamma\le 1+\mu,\quad \bar\gamma\le 1+\mu
\end{eqnarray}
and that (\ref{56}), (\ref{561}) are satisfied.
On the other hand, since $F\ge c_1v^p$ and $G\ge c_1u^q$, it follows from (\ref{31}), (\ref{56}) and (\ref{561})
that there exists $T_1\in (T_0,T^*)$ such that
\begin{eqnarray}\label{etaF}
C_4\delta \gamma u^\gamma+C_3\delta u\le Cv^{\frac{p+1}{q+1}\gamma}\le \mu F
\quad\mbox{on}\:[T_1,\,T^\ast)\times(\rho_1,\,\rho_2)
\end{eqnarray}
and
\begin{eqnarray}\label{etaG}
C_4 \bar\gamma u^{\bar\gamma}+C_3 v\le Cu^{\frac{q+1}{p+1}\bar\gamma}\le \mu G
\quad\mbox{on}\:[T_1,\,T^\ast)\times(\rho_1,\,\rho_2).
\end{eqnarray}

Combining (\ref{55b}), (\ref{55c}) with (\ref{78})-(\ref{etaG}) and using $F_v, G_u\ge 0$, we deduce that 
$$\widetilde{H}_1\leq -uF_u-\kappa_1vF_v +(1+2\mu) F\le 0
\quad\mbox{on}\:[T_1,\,T^\ast)\times(\rho_1,\,\rho_2)$$
and
$$\widetilde{H}_2\leq -vG_v-\kappa_2uG_u +(1+2\mu) G\le 0
\quad\mbox{on}\:[T_1,\,T^\ast)\times(\rho_1,\,\rho_2)$$
and the Lemma follows from (\ref{52})--(\ref{55}).
\end{proof}

With Proposition~\ref{30} and Lemma \ref{49} at hand, we can now conclude the proof of Theorem~\ref{59}.
\smallskip

\begin{proof}[Proof of Theorem~\ref{59}.] Let $(u,\,v)$ be a solution of
system (\ref{1}) satisfying the hypotheses of Theorem~\ref{59} and assume for contradiction that there exists
$\rho_0\in(0,\,R)$ such that
\begin{eqnarray}\label{60}
\underset{t\rightarrow
T^\ast}{\limsup}\,(u(t,\,\rho_0)+v(t,\,\rho_0))=\infty.
\end{eqnarray}
 Also, since $(u,\,v)\not\equiv (0,\,0)$, it is easy to see that $u,\,v>0$
in $(0,\,T^\ast)\times [0,\,R)$, hence $u_\rho(t,\,\cdot)\not\equiv 0$
and $v_\rho(t,\,\cdot)\not\equiv 0$ for each $t\in (0,\,T^\ast)$.
Next, we have
$u_t-\delta u_{\rho\rho}-\delta\frac{n-1}{\rho}u_\rho=f(t,\,\rho)$
on $(0,\,T^\ast)\times(0,\,R),$ with $f(t,\,\rho)=F(u,\,v)$. Since, $u_\rho,\, v_\rho\leq0$ and $F_v\geq0$,
a strong maximum principle
(which can be seen from straightforward modifications of the proof of \cite[Lemma 52.18, p. 519]{pavol}) then guarantees
\begin{equation}\label{urho1}
u_\rho<0\quad\hbox{ on $(0,\,T^\ast)\times(0,\,R)$,}
\end{equation}
and similarly
\begin{equation}\label{urho2}
v_\rho<0\quad\hbox{ on $(0,\,T^\ast)\times(0,\,R)$.}
\end{equation}

Set $\rho_1=\rho_0/4,$ $\rho_2=\rho_0/2$ and let $J,$ $\overline{J}$, $T_1$ be given by  Lemma \ref{49}.
 Since $c(\rho_1)=c(\rho_2)=0$, we have
$J,$ $\overline{J}\leq0$ on
$\bigl((T_1,\,T^\ast)\times\{\rho_1\}\bigr)\cup\bigl((T_1,\,T^\ast)\times\{\rho_2\}\bigr)$.
Taking $\eps> 0$ sufficiently small and using (\ref{urho1}), (\ref{urho2}),
we see that $J,$ $\overline{J}\leq0$ on
$\{T_1\}\times[\rho_1,\,\rho_2]$.
Then, owing to assumption (\ref{monotone}), we may use the maximum principle (as in, e.g., \cite{souplet}), 
to obtain $J,$ $\overline{J}\leq0$ on
$(T_1,\,T^\ast)\times[\rho_1,\,\rho_2].$
Consequently,
\begin{eqnarray*}
-u_\rho\hspace{-0,6cm}& &\geq\eps c(\rho)\, u^\gamma
\quad\hbox{ on $(T_1,\,T^\ast)\times[\rho_1,\,\rho_2].$}
\end{eqnarray*}
 By integration, we obtain
\begin{eqnarray*}
 & &u^{1-{\gamma}}(t,\,\rho_2)\geq (\gamma-1)\eps
 \int_{\rho_1}^{\rho_2}\sin^2\left( \frac{\pi(\rho-\rho_1)}{\rho_2-\rho_1}\right)d\rho>0\quad\hbox{for all } T_1\leq t<T^\ast.
\end{eqnarray*}
It follows that $u(t,\,\rho_2)$ is bounded for $ T_1\leq t<T^\ast$,
and similarly $v(t,\,\rho_2)$ is bounded for $ T_1\leq t<T^\ast$.
Since $u_\rho,$ $v_\rho\leq0$, this leads to a contradiction
with (\ref{60}) and proves the theorem.
\end{proof}

\section{Upper type~I estimates away from the origin}

\setcounter{equation}{0} 

\begin{pro}\label{65}
 Let $\Omega= B(0,\,R),$ $p,\,q>1$, $\delta >0$.
 Assume that (\ref{initialdata})--(\ref{monotone}) are satisfied and that, for some~$c_1>0$,
\begin{equation}\label{28a}
F(u,v)\ge c_1v^p,\quad G(u,v)\ge c_1u^q,\quad\hbox{for all $u,v\ge 0$.}
\end{equation}
Let the solution $(u,\,v)$ of (\ref{1}) satisfy $T^\ast<\infty$.
Then, there exists a constant $M_0>0$
(depending only on $n,\,p,\,q,\,\delta,\,c_1,\,R,\,T^\ast)$ such that
\begin{equation}\label{28}
 u(t,\,\rho)\leq M_0\,\rho^{-n}\,(T^\ast-t)^{-\alpha} \quad\mbox{and}\quad v(t,\,\rho)\leq M_0\,\rho^{-n}\,(T^\ast-t)^{-\beta},
\end{equation}
for  all $t\in[0,\,T^\ast)$ and $0<\rho\leq R.$
\end{pro}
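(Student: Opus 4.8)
The plan is to prove Proposition~\ref{65} by \emph{Kaplan's eigenfunction method} applied on a suitable subdomain away from the origin, combined with the monotonicity \eqref{monot}.

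Here is the strategy in more detail. Since $u,v$ are radially nonincreasing, for any fixed $\rho\in(0,R]$ the values $u(t,\rho)$ and $v(t,\rho)$ are controlled from below by averages of $u(t,\cdot)$ and $v(t,\cdot)$ over the ball $B(0,\rho)$: indeed $u(t,\rho)\le u(t,\sigma)$ for all $\sigma\le\rho$, so $u(t,\rho)\le |B(0,\rho)|^{-1}\int_{B(0,\rho)}u(t,x)\,dx \le c_n\rho^{-n}\int_{B(0,\rho)}u(t,x)\,dx$, and likewise for $v$. Hence it suffices to bound the $L^1$-norms $\int_{B(0,\rho)}u(t,x)\,dx$ and $\int_{B(0,\rho)}v(t,x)\,dx$ by $C(T^\ast-t)^{-\alpha}$ and $C(T^\ast-t)^{-\beta}$ respectively, uniformly in $\rho\in(0,R]$; it is in fact enough to do this for $\rho=R$, i.e. to bound the full spatial $L^1$ norms, since the integrals over $B(0,\rho)$ are only smaller. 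So I introduce
\begin{equation*}
y(t)=\int_\Omega u(t,x)\varphi(x)\,dx,\qquad z(t)=\int_\Omega v(t,x)\varphi(x)\,dx,
\end{equation*}
where $\varphi>0$ is the first Dirichlet eigenfunction of $-\Delta$ on $\Omega=B(0,R)$ with eigenvalue $\lambda_1>0$, normalized by $\int_\Omega\varphi=1$. Multiplying the two PDEs by $\varphi$, integrating by parts (the boundary terms have a favorable sign because $\partial_\nu\varphi<0$ on $\partial\Omega$ and $u,v\ge 0$), and using \eqref{28a} together with Jensen's inequality, I obtain the differential inequality system
\begin{equation*}
y'\ge -\delta\lambda_1 y+c_1 z^p,\qquad z'\ge -\lambda_1 z+c_1 y^q,
\end{equation*}
valid on $(0,T^\ast)$.

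The core of the argument is then the following ODE fact: if $(y,z)$ are positive, satisfy the above cooperative differential inequality system on $[0,T^\ast)$, and $T^\ast<\infty$, then necessarily $y(t)\le C(T^\ast-t)^{-\alpha}$ and $z(t)\le C(T^\ast-t)^{-\beta}$ with $C$ depending only on $\lambda_1,\delta,c_1,p,q,T^\ast$. The mechanism is a contradiction/continuation argument: the exponents $\alpha=\frac{p+1}{pq-1}$, $\beta=\frac{q+1}{pq-1}$ are precisely the blow-up rates of the pure ODE system $y'=c_1z^p$, $z'=c_1y^q$, so if $y(t_0)$ or $z(t_0)$ were too large relative to $(T^\ast-t_0)^{-\alpha}$, $(T^\ast-t_0)^{-\beta}$ at some $t_0$, one can compare $(y,z)$ from below with the solution of the corresponding autonomous system started at $t_0$ (the linear terms $-\delta\lambda_1 y$, $-\lambda_1 z$ only help once $y,z$ are large, or can be absorbed), forcing blow-up before $T^\ast$, a contradiction. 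Concretely I would run this as follows: set $Y(t)=(T^\ast-t)^\alpha y(t)$ and $Z(t)=(T^\ast-t)^\beta z(t)$, suppose $\max(Y,Z)$ is not bounded by some large constant $K$ on $[0,T^\ast)$, pick the first time the max reaches $K$, and show that the differential inequalities then propagate the largeness and drive $Y$ or $Z$ to $+\infty$ in finite time before $T^\ast$; choosing $K=K(\lambda_1,\delta,c_1,p,q,T^\ast)$ large enough makes the linear terms negligible in this regime. Finally, undoing the eigenfunction reduction: $\int_{B(0,\rho)}u(t,x)\,dx\le (\min_{B(0,\rho)}\varphi)^{-1}y(t)\le C(T^\ast-t)^{-\alpha}$ is not quite uniform as $\rho\to 0$, so instead I use $u(t,\rho)\le c_n\rho^{-n}\int_{B(0,\rho)}u\,dx \le c_n\rho^{-n}\int_\Omega u\,dx$, and bound $\int_\Omega u\,dx$ directly by another eigenfunction-type test (or simply note $\varphi\ge c\,\mathrm{dist}(x,\partial\Omega)$ and that $u$ is decreasing, so $\int_\Omega u\le C y(t)$ up to the decreasing structure — more cleanly, repeat the computation above with $\varphi$ replaced by a fixed smooth positive weight, or just observe $\int_\Omega u\,dx \le C\int_\Omega u\varphi\,dx$ fails near $\partial\Omega$ but the radial monotonicity gives $\int_{\Omega}u\,dx\le C\int_{B(0,R/2)}u\,dx\le C\,y(t)/\min_{B(0,R/2)}\varphi$). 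This yields \eqref{28}.

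The main obstacle is getting the ODE rate estimate $y\lesssim(T^\ast-t)^{-\alpha}$, $z\lesssim(T^\ast-t)^{-\beta}$ \emph{from above} out of \emph{lower} differential inequalities — the point being that lower bounds on $y',z'$ normally only force blow-up, not a rate. The resolution is that once blow-up at time exactly $T^\ast$ is known (which it is, since $T^\ast<\infty$ and $\|u(t)\|_\infty+\|v(t)\|_\infty\to\infty$, and $y,z$ control these norms from below via monotonicity up to the $\rho^{-n}$ factor — actually one uses that $y,z\to\infty$ as $t\to T^\ast$, which follows because if $y,z$ stayed bounded then by the lower inequalities and boundedness one gets a contradiction with the solution ceasing to exist... more carefully: $\|u(t)\|_\infty\ge u(t,R/2)\ge c\,y(t)$ so $y(t)\to\infty$ or $z(t)\to\infty$, and the cooperative structure forces both), the self-improving nature of the autonomous comparison pins the rate: a solution bounded below by an autonomous blow-up solution that itself would blow up strictly before $T^\ast$ contradicts survival up to $T^\ast$. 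Making this quantitative — i.e. extracting the explicit constant $C$ and handling the interplay of the two linear dissipative terms with possibly $\delta\ne 1$ — is the technical heart, but it is a self-contained ODE lemma and, given \eqref{28a} and the monotonicity already recorded in \eqref{monot}, everything else is routine integration by parts and Jensen.
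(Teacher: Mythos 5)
Your proposal follows essentially the same route as the paper: Kaplan's eigenfunction method with the first Dirichlet eigenfunction, Jensen's inequality to get the coupled ODI system $y'\ge c_1z^p-\delta\lambda_1 y$, $z'\ge c_1y^q-\lambda_1 z$, the upper rate bounds $y\le C(T^\ast-t)^{-\alpha}$, $z\le C(T^\ast-t)^{-\beta}$ for this system, and finally the radial monotonicity to convert the weighted $L^1$ bounds into the pointwise bounds with the $\rho^{-n}$ factor. The only difference is that the paper absorbs the linear terms via the substitution $Y=e^{\delta\lambda_1 t}y$, $Z=e^{\lambda_1 t}z$ and then simply cites a known ODE lemma for the rate estimate, whereas you sketch a (correct in spirit) comparison argument for that lemma yourself.
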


The argument, which is based on Kaplan's eigenfunction method,
 is well known for scalar equations (see e.g. \cite{matanomerle}
and  \cite[Propositions~4.4, 4.6 and Corollary 4.5, pp. 895-896]{MullerWeissler})
and can be easily adapted to systems.

\begin{proof}
We denote by $\lambda_{1}$ the
first eigenvalue of $-\Delta$ in $H_{0}^{1}(B(0,\,R))$ and
$\varphi_{1}$ the corresponding eigenfunction such that
$\varphi_{1}>0$ and $\int_{B(0,\,R)}\varphi_{1}(x)dx=1$.
Multiplying (\ref{1}) by $\varphi_1$, using (\ref{28a}) and integrating by parts, we obtain, on $(0,T^\ast)$,
\begin{eqnarray*}
 \frac{d}{dt}\int_{B(0,\,R)}u(t,\,x)\varphi_1(x) dx\hspace{-0,6cm}& &\ge c_1\int_{B(0,\,R)}v^p(t,\,x)\varphi_1(x)dx 
 -\delta\lambda_1\int_{B(0,\,R)}u(t,\,x)\varphi_1(x)dx,\\
 \frac{d}{dt}\int_{B(0,\,R)}v(t,\,x)\varphi_1(x) dx\hspace{-0,6cm}& &\ge c_1\int_{B(0,\,R)}u^q(t,\,x)\varphi_1(x)dx 
 -\lambda_1\int_{B(0,\,R)}v(t,\,x)\varphi_1(x)dx.
  \end{eqnarray*}
Let $y(t)=\int_{B(0,\,R)}u(t,\,x)\varphi_1(x)dx$ and
$z(t)=\int_{B(0,\,R)}v(t,\,x)\varphi_1(x)dx$.
By Jensen's inequality, we deduce that
$$y'(t)\geq c_1 z^p(t)-\delta\lambda_1 y(t),\qquad z'(t)\geq c_1 y^q(t)-\lambda_1 z(t).$$
We put $Y(t)=e^{\delta\lambda_1 t}y(t)$ and $Z(t)=e^{\lambda_1
t}z(t).$ Then, there exists $C>0$ such that
$$Y'(t)\geq C Z^p(t),\qquad Z'(t)\geq C Y^q(t)\qquad\hbox{on $(0,T^\ast)$.}$$
Here and in the rest of the proof, $C$ denotes a
positive constant depending only on $T^\ast,\,\delta,\,p,\,q,\,n,\, R$ and which may vary from line to line.
By \cite[Lemma 32.10, p. 284]{souplet}, there exists $C$ such that
$$Y(t)\leq C (T^\ast-t)^{-\alpha},\qquad Z(t)\leq C (T^\ast-t)^{-\beta}\qquad\hbox{ on $[0,T^\ast)$,}$$
where $\alpha,\,\beta$ are given by (\ref{62}). Therefore,
$$y(t)\leq C (T^\ast-t)^{-\alpha},\qquad z(t)\leq C (T^\ast-t)^{-\beta}\qquad\hbox{ on $[0,T^\ast)$.}$$
For $0<\rho\le R/2$, since $u,\,v$ are radially symmetric and radially nonincreasing, we deduce that
\begin{eqnarray*}
  & &\rho^n\, u(t,\,\rho)\leq
  C \int_{B(0,\,R/2)}u(t,|x|)dx\leq C\int_{B(0,\,R/2)}u(t,|x|)\varphi_1(x)dx\leq C (T^\ast-t)^{-\alpha},\\
  & &\rho^n\, v(t,\,\rho)\leq
  C \int_{B(0,\,R/2)}v(t,|x|)dx\leq C\int_{B(0,\,R/2)}
  v(t,|x|)\varphi_1(x)dx\leq C (T^\ast-t)^{-\beta}.
\end{eqnarray*}
The case when $R/2<\rho<R$ then follows from the radial nonincreasing property. 
This completes the proof.
 \end{proof}

\section{A non-degeneracy criterion for blow-up points}
\setcounter{equation}{0} 

The main objective of this subsection is the following result,
which gives a sufficient, local smallness condition, at any given time sufficiently close to $T^\ast$,
for excluding blow-up at a given point different from the origin.

\begin{pro}\label{15}
Let $\Omega= B(0,\,R),$ $p,\,q>1$, $\delta >0$.
Assume (\ref{initialdata})--(\ref{monotone}), (\ref{controle})--(\ref{controle1b})
 and let the solution $(u,\,v)$ of (\ref{1}) satisfy $T^\ast<\infty$.
Let $d_0,\,d_1$ satisfy $0<d_1<d_0<R$.
There exist $\eta,\, \tau_0>0$ such that if, for some
$t_1\in[T^\ast-\tau_0,\,T^\ast)$, we have
\begin{eqnarray}\label{17}
(T^\ast-t_1)^\alpha u(t_1,\,d_1)\leq \eta
\quad\hbox{ and}\quad
(T^\ast-t_1)^\beta v(t_1,\,d_1)\leq \eta,
\end{eqnarray}
then $d_0$ is not a blow-up point of
$(u,\,v)$, i.e. $(u,\,v)$ is uniformly bounded in the neighborhood
of $(T^\ast,\,d_0).$
Here, the numbers $\eta,\,\tau_0$ depend only on $p,\,q,\,r,\,s,\,\delta,\,c_1,\,c_2,\,d_0,\,d_1,\,n,\,R,\,T^\ast$.
\end{pro}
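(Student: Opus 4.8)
The plan is to work in self-similar variables centered at a point at distance $d_1$ from the origin, first reducing matters to a \emph{globally bounded} rescaled problem via a reflection/supersolution device (this is forced upon us by the absence of a global type~I bound), and then to run a delayed-smoothing argument exploiting the spectral gap of the rescaled linear operator.

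\textbf{Step 1: the modified solution.} Set $\rho_*=d_1/2$. By Proposition~\ref{65}, on the annulus $\{\rho_*\le|x|\le R\}$ we have the local type~I bounds $u\le C(T^\ast-t)^{-\alpha}$, $v\le C(T^\ast-t)^{-\beta}$, together with the corresponding interior gradient estimates $|\nabla u|\le C(T^\ast-t)^{-\alpha-1/2}$, $|\nabla v|\le C(T^\ast-t)^{-\beta-1/2}$. I would extend $(u,v)|_{\{\rho\ge\rho_*\}}$ to a radial couple $(\tilde u,\tilde v)$ on $B(0,R)$ by even reflection across $\{|x|=\rho_*\}$, i.e. $\tilde u(t,\rho)=u(t,2\rho_*-\rho)$ for $\rho<\rho_*$ (and likewise $\tilde v$), adding a supersolution and/or a cutoff near $x=0$ to kill the singularity created there by the reflection and to keep $(\tilde u,\tilde v)$ an admissible comparison object. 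Then $(\tilde u,\tilde v)$ coincides with $(u,v)$ on $\{\rho\ge\rho_*\}$, is \emph{globally} type~I bounded on $[0,T^\ast)\times B(0,R)$, and — because reflection is not an isometry for $\Delta$ — solves (\ref{1}) up to extra first-order terms $\mathcal E_1,\mathcal E_2$ supported in $\{\rho<\rho_*\}$ and of size $C(T^\ast-t)^{-\alpha-1/2}$, $C(T^\ast-t)^{-\beta-1/2}$.

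\textbf{Step 2: similarity variables.} Fix $x_0$ with $|x_0|=d_1$ and put $w(y,s)=(T^\ast-t)^\alpha\tilde u(t,x_0+\sqrt{T^\ast-t}\,y)$, $z(y,s)=(T^\ast-t)^\beta\tilde v(t,x_0+\sqrt{T^\ast-t}\,y)$, $s=-\log(T^\ast-t)$. Since $\beta p=\alpha+1=\alpha r$ and $\alpha q=\beta+1=\beta s$ — which is exactly why the critical exponents (\ref{controle1b}) are what the argument needs — the couple $(w,z)$ is \emph{globally bounded} on $\mathbb R^n\times[s_0,\infty)$ and satisfies
$$w_s=\delta\Delta w-\tfrac12\,y\cdot\nabla w-\alpha w+\Phi,\qquad z_s=\Delta z-\tfrac12\,y\cdot\nabla z-\beta z+\Psi,$$
with $c_1 z^p\le\Phi\le c_2(z^p+w^r+e^{-(\alpha+1)s})$ and $c_1 w^q\le\Psi\le c_2(w^q+z^s+e^{-(\beta+1)s})$, plus the rescaled reflection errors, which are $O(e^{-s/2})$ and supported in $\{|y|\ge e^{s/2}d_1/2\}$ — exponentially far away, as announced in Section~1.2 (the outer boundary $\{|x|=R\}$ and the point $x=0$ also sit at $|y|\to\infty$). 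The point $x_0$ corresponds to $y=0$, so $w(0,s)=(T^\ast-t)^\alpha u(t,d_1)$, $z(0,s)=(T^\ast-t)^\beta v(t,d_1)$.

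\textbf{Step 3: smallness input, delayed smoothing, and conclusion.} With $s_1=-\log(T^\ast-t_1)$, hypothesis~(\ref{17}) reads $w(0,s_1),\,z(0,s_1)\le\eta$. A uniform parabolic Hölder estimate for the zoomed couple $\bigl((T^\ast-t_1)^\alpha u,(T^\ast-t_1)^\beta v\bigr)(t_1+(T^\ast-t_1)\tau,\cdot)$ on $(-1,0]\times B_2$ — which is bounded by Proposition~\ref{65} and has bounded right-hand side thanks to the criticality of $r,s$ — upgrades this to $w(\cdot,s_1),\,z(\cdot,s_1)\le 2\eta$ on a ball $B_{\delta_0}(0)$, $\delta_0=\delta_0(\eta)>0$, while elsewhere only the global bound $w,z\le C$ is available. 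I would then invoke the delayed-smoothing mechanism of Herrero--Velázquez type: the semigroup of $\delta\Delta-\frac12 y\cdot\nabla-\alpha$ (resp. with $-\beta$) acts on $L^\infty$ with norm $\le e^{-\alpha s}$ (resp. $e^{-\beta s}$), and the Mehler-type kernel assigns to $\{|y'|\ge\delta_0\}$, evaluated at $y=0$, a mass which is both $O(e^{-c\delta_0^2})$ and dominated by the $e^{-\alpha\cdot}$ decay; hence the contributions to $w(0,s),z(0,s)$ from the merely-bounded far field, from the exponentially far errors, and from the nonlinearity while $(w,z)$ is still only $O(C)$, are all controlled. After a delay $S_\eta$ one reaches $w(0,s),z(0,s)\le\eta$ for all $s\ge s_1+S_\eta$, and since then $\Phi\le c_2(\eta^p+\eta^r+e^{-(\alpha+1)s})$ (and similarly $\Psi$) is superlinearly small and beaten by the $e^{-\alpha s},e^{-\beta s}$ decay, a continuation argument keeps $w(0,s),z(0,s)$ bounded (in fact decaying) for all $s$. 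Undoing the change of variables gives $\sup_{[t_1,T^\ast)}\bigl(u(t,d_1)+v(t,d_1)\bigr)<\infty$, hence (adding the compact piece $[0,t_1]$) a finite bound for all $t<T^\ast$; by the radial monotonicity $u(t,\rho)\le u(t,d_1)$, $v(t,\rho)\le v(t,d_1)$ for $\rho\ge d_1$, so $(u,v)$ is uniformly bounded near $(T^\ast,d_0)$ and $d_0$ is not a blow-up point. Tracking constants, one first fixes $\eta$ small (depending on $p,q,r,s,\delta,c_1,c_2,\alpha,\beta$) for the nonlinear bootstrap, then $\tau_0$ small (depending on $\eta$ and on $d_0,d_1,n,R,T^\ast$) so that $s_1$ is large enough for the kernel and error estimates.

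\textbf{Main obstacle.} Conceptually the most delicate part is making the delayed-smoothing step rigorous in this non-type~I setting: one must simultaneously handle the ball $B_{\delta_0(\eta)}$ of initial smallness shrinking with $\eta$, control the influence on $y=0$ of the merely-bounded far field and of the reflection errors using only Gaussian kernel decay together with the exponential spatial separation, and close the bootstrap in the presence of the genuinely critical nonlinearities $w^r,z^s$ (not lower-order perturbations of $z^p,w^q$). The modified-solution construction of Step~1 is technically fussy — even reflection creates a convex kink along $\{|x|=\rho_*\}$ and a singularity at $x=0$, both of which must be absorbed into a supersolution or a cutoff — but it is otherwise routine.
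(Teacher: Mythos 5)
Your overall strategy (modify the solution to restore a global type~I bound, pass to similarity variables, run a Herrero--Vel\'azquez delayed-smoothing bootstrap) is the same as the paper's, but the way you set up Steps 1--3 leaves a genuine gap that the mechanism you invoke cannot close. You reflect at $\rho_\ast=d_1/2$ and center the similarity variables at $d_1$, so your modified solution coincides with the true $(u,v)$ on the whole interval $[d_1/2,d_1)$. On that interval hypothesis (\ref{17}) gives no information: only the type~I bound $u\le C(T^\ast-t)^{-\alpha}$ of Proposition~\ref{65} is available there. In the rescaled variables this is the region $\{-e^{s/2}d_1/2\le y<0\}$, which sits immediately adjacent to the origin, and your claim that the Mehler-type kernel assigns to $\{|y'|\ge\delta_0\}$ a mass $O(e^{-c\delta_0^2})$ is false for \emph{small} $\delta_0$: for $s\gtrsim1$ the kernel of $e^{s\mathcal L_\delta}$ evaluated at $y=0$ is essentially the invariant Gaussian $K_\delta$, so $\{|y'|\ge\delta_0\}$ carries mass $1-O(\delta_0)\approx1$. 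Hence the contribution of the merely-bounded left region to $w(0,s_1+s)$ is $O(C)$, not $O(\eta)$, and the bootstrap cannot even be initialized. (Note also that on the right half-line no H\"older estimate is needed: radial monotonicity gives $w(y,s_1)\le w(0,s_1)\le\eta$ for all $y\ge0$ directly; the problem is entirely on the left.)

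The paper resolves exactly this difficulty by making the truncation point coincide with the hypothesis point $d_1$ and by shifting the center of the similarity variables strictly to its right. Concretely: $(u,v)$ is extended by zero to the left of $d_1$; a dominating pair $(\overline u,\overline v)$ is built on $[d_1,\infty)$ with boundary values $M(T^\ast-t)^{-\alpha}$, $M(T^\ast-t)^{-\beta}$ and then extended by \emph{odd} reflection about $d_1$; and the similarity variables are centered at $d=(d_0+d_1)/2$. With this choice, (\ref{17}) plus monotonicity gives $\widetilde w(\sigma_1,\cdot),\widetilde z(\sigma_1,\cdot)\le\eta$ on \emph{all} of $\mathbb{R}$, and the only uncontrolled terms are indicator functions supported in $\{\theta<-\ell e^{\sigma/2}\}$ with $\ell=d-d_1>0$, whose Gaussian-weighted mass is $O\bigl(\exp(-c\ell^2e^{\sigma})\bigr)$ --- genuinely negligible, unlike your $\{|y'|\ge\delta_0\}$. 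The bootstrap is then run in the weighted $L^1_{K}\to L^m_{K}$ framework of Lemma~\ref{66} (with an additional interpolation step to absorb the critical term $\widetilde w^r$, which your pointwise scheme also does not address), and the conclusion is drawn at $d$, hence at $d_0>d$ by monotonicity. Your argument could be repaired along these lines, but not with the reflection at $d_1/2$ and the center at $d_1$.
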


 As in \cite{souplet}, the proof uses similarity variables and delayed smoothing effects.
However, as explained in Section~1.2, a new difficulty arises, caused by the
absence of global type~I information on the blow-up rate. For this reason, we
consider only radial and radially decreasing solutions (whereas the
analogous criterion in \cite{souplet} was established for any
solution). In this more delicate situation, the current formulation,
slightly different from that in \cite{souplet}, turns out to be more
convenient. Namely, instead of expressing the local non-blow-up
criterion itself with the weighted $L^1$ norm of rescaled solution,
it is expressed
in terms of pointwise smallness on $((T^\ast-t)^\alpha
u,(T^\ast-t)^\beta v)$ at a point $d_1<d_0$ and at some time close to $T^*$.

\subsection{Similarity variables and delayed smoothing effects}
In view of the proof of Proposition~\ref{15}
we introduce the well-known similarity variables (cf.~\cite{kohn1}).
More precisely, for any given $d\in\mathbb{R}$, we define the
(one-dimensional) similarity variables around $(T^\ast,\,d)$,
associated with $(t,\,\rho)\in (0,\,T^\ast)\times\mathbb{R}$, by:
\begin{equation}\label{defsimilvar}
\sigma=-\log(T^\ast-t)\in [\hat\sigma,\,\infty),\qquad
\theta=\frac{\rho-d}{\sqrt{T^\ast-t}}=e^{\sigma/2}(\rho-d) \in \mathbb{R},
\end{equation}
where $\hat\sigma=-\log T^\ast$.
For given $\delta>0$, let $U$ be a (classical) solution of
$$U_t-\delta U_{\rho\rho}=H(t,\,\rho),\quad 0<t<T^*,\ \rho\in \mathbb{R}$$
(where the smooth functions $H$ will be specified later).
Then
$$V=V_d(\sigma,\,\theta)=(T^\ast-t)^{\alpha}U(t,\,y)
= e^{-\alpha\sigma}U\bigl(T^\ast-e^{-\sigma},\,d+\theta e^{-\sigma/2}\bigr)$$
is a solution of
\begin{equation}\label{eqsimilV}
V_\sigma-\mathcal{L_{\delta}}V+\alpha V=
e^{-(\alpha+1)\sigma}H\bigl(T^\ast-e^{-\sigma},\,d+\theta e^{-\sigma/2}\bigr),
 \quad \sigma>\hat\sigma,\ \theta\in \mathbb{R},
\end{equation}
where
\begin{eqnarray*}
 &&\mathcal{L_{\delta}}=\delta\partial^2_\theta -\frac{\theta }{2}\partial_\theta =\delta K_{\delta}^{-1}\partial_\theta (K_{\delta}\partial_\theta ),\qquad
 K_{\delta}(\theta )=(4\pi\delta)^{-1/2}e^{\frac{-\theta^2}{4\delta}}.
\end{eqnarray*} 
We denote by
$(T_{\delta}(\sigma))_{\sigma\geq0}$ the semigroup associated with
$\mathcal{L}_{\delta}$. More precisely, for each $\phi\in
L^\infty(\mathbb{R}),$ we set
$T_{\delta}(\sigma)\phi:=w(\sigma,\,.)$, where $w$ is the unique
solution of
\begin{equation}\label{43}
\left\{
  \begin{array}{ll}
    w_\sigma=\mathcal{L}_{\delta}w, & \hbox{ } \theta \in\mathbb{R},\,\,\sigma>0,\\
    w(0,\,\theta )=\phi(\theta ), & \hbox{ }\theta \in\mathbb{R}.
  \end{array}
\right.
\end{equation}
For any $\phi\in L^\infty(\mathbb{R}),$ we put
\begin{equation*}
    \|\phi\|_{L_{K_{\delta}}^m}=\left(\int_{\mathbb{R}}|\phi(\theta )|^mK_{\delta}(\theta )d\theta \right)^{1/m},\quad 1\leq m<\infty.
\end{equation*}
 Let $1\leq k<m<\infty$ and $\delta>0$, then, by Jensen's inequality,
 \begin{eqnarray}\label{9}
        & &\|\phi\|_{L_{K_{\delta}}^k}\leq 
        \|\phi\|_{L_{K_{\delta}}^m},\quad 1\leq
        k<m<\infty.
     \end{eqnarray}
The semigroups $(T_{\delta}(\sigma))_{\sigma\geq0}$ have the following properties,
which will be useful when dealing with system (\ref{1}) with unequal diffusivities:

\begin{lem}\label{66}
\begin{enumerate}
  \item (Contraction) For any $1\leq m<\infty$, we have
  \begin{equation}\label{11}
\|T_{\delta}(\sigma)\phi\|_{L_{K_{\delta}}^m}\leq\|\phi\|_{L_{K_{\delta}}^m},
 \quad\; \mbox{for all}\; \delta>0,\; \sigma\geq0,\,\phi\in L^\infty(\mathbb{R}).
\end{equation}
Moreover, for all $0<\delta\le \lambda<\infty$, we have 
  \begin{equation}\label{45}
 \|T_\delta(\sigma)\phi\|_{L_{K_\lambda}^m}\leq\Bigl(\frac{\lambda}{\delta}\Bigr)^{1/2}\|\phi\|_{L_{K_\lambda}^m},
 \quad\; \mbox{for all}\; \sigma\geq0,\,\phi\in L^\infty(\mathbb{R}).
 \end{equation}
  \item (Delayed regularizing effect) For any $1\leq k<m<\infty,$ there exist $\hat C, \sigma^\ast>0$ such that
 \begin{equation}\label{12}
\|T_{\delta}(\sigma)\phi\|_{L_{K_{\delta}}^m}\leq \hat C\|\phi\|_{L_{K_{\delta}}^k},
 \quad\; \mbox{for all}\;  \delta>0,\ \sigma\geq \sigma^\ast,\,\phi\in L^\infty(\mathbb{R}).
\end{equation}
Moreover, for all $0<\delta\le \lambda<\infty$, we have 
  \begin{equation}
 \|T_\delta(\sigma)\phi\|_{L_{K_\lambda}^m}\leq \hat C\Bigl(\frac{\lambda}{\delta}\Bigr)^{1/2}\|\phi\|_{L_{K_\lambda}^k},
 \quad\; \mbox{for all}\;\sigma\geq \sigma^\ast,\,\phi\in L^\infty(\mathbb{R}).
 \end{equation}
\end{enumerate}
\end{lem}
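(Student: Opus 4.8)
The plan is to prove Lemma~\ref{66} by reducing everything to the case $\delta=1$ via a simple scaling, and then exploiting the explicit Gaussian (Mehler) kernel representation of the semigroup $T_1(\sigma)$. First I would observe that the substitution $\phi_\delta(\theta)=\phi(\sqrt\delta\,\theta)$ conjugates $\mathcal L_\delta$ to $\mathcal L_1$: if $w$ solves $w_\sigma=\mathcal L_\delta w$ with $w(0,\cdot)=\phi$, then $\tilde w(\sigma,\theta):=w(\sigma,\sqrt\delta\,\theta)$ solves $\tilde w_\sigma=\mathcal L_1\tilde w$ with $\tilde w(0,\cdot)=\phi_\delta$. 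Since $K_\delta(\sqrt\delta\,\theta)\,\sqrt\delta\,d\theta=K_1(\theta)\,d\theta$, the change of variables gives $\|T_\delta(\sigma)\phi\|_{L^m_{K_\delta}}=\|T_1(\sigma)\phi_\delta\|_{L^m_{K_1}}$ and $\|\phi\|_{L^m_{K_\delta}}=\|\phi_\delta\|_{L^m_{K_1}}$. Hence (\ref{11}) and (\ref{12}) for general $\delta$ follow at once from the corresponding statements with $\delta=1$, with the \emph{same} constants $\hat C,\sigma^\ast$.

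Next I would establish the two statements for $T_1(\sigma)$. The contraction property (\ref{11}) with $m=1$ is immediate since $K_1$ is a probability density preserved by the adjoint flow (equivalently, $T_1(\sigma)$ maps $1$ to $1$ and is positivity preserving, so $\int |T_1(\sigma)\phi|K_1\le \int T_1(\sigma)|\phi|\,K_1=\int|\phi|K_1$); for general $m\ge1$ one applies Jensen's inequality to the probability kernel representing $T_1(\sigma)$, exactly as in \cite[Chapter~44]{souplet} or \cite{kohn1}. For the delayed regularizing effect (\ref{12}), I would use the explicit Mehler-type formula for the Ornstein--Uhlenbeck-like semigroup: $T_1(\sigma)\phi(\theta)=\int_{\mathbb R}G_\sigma(\theta,\eta)\phi(\eta)\,d\eta$ with a Gaussian kernel whose variance is bounded below for $\sigma\ge\sigma^\ast$ (say $\sigma^\ast=1$), which yields the hypercontractivity-type bound $\|T_1(\sigma)\phi\|_{L^m_{K_1}}\le \hat C\|\phi\|_{L^k_{K_1}}$ for $\sigma\ge\sigma^\ast$; this is the standard delayed smoothing effect and is proved, e.g., in \cite[Chapter~44]{souplet}. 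I would just cite this and record the constants.

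Finally I would prove the mixed-weight estimates (\ref{45}) and its $L^k\to L^m$ analogue, which are the genuinely new ingredient needed for unequal diffusivities. Here the point is to compare the weight $K_\lambda$ with $K_\delta$ when $0<\delta\le\lambda$. Since $K_\lambda(\theta)/K_\delta(\theta)=(\delta/\lambda)^{1/2}e^{\theta^2(1/(4\delta)-1/(4\lambda))}\le(\delta/\lambda)^{1/2}\cdot(K_\delta\text{-integrable factor})$ is \emph{not} bounded, a crude pointwise comparison fails; instead I would run the scaling argument again but track the weights carefully. Writing $\psi=T_\delta(\sigma)\phi$ and using $\psi(\theta)=\tilde\psi(\sigma,\theta/\sqrt\delta)$ with $\tilde\psi=T_1(\sigma)\phi_\delta$, one gets $\|\psi\|^m_{L^m_{K_\lambda}}=\int|\tilde\psi(\sigma,\theta/\sqrt\delta)|^mK_\lambda(\theta)\,d\theta=\sqrt\delta\int|\tilde\psi(\sigma,\eta)|^mK_\lambda(\sqrt\delta\,\eta)\,d\eta$, and $K_\lambda(\sqrt\delta\,\eta)=\sqrt{\delta/\lambda}\,(4\pi\delta)^{-1/2}e^{-\delta\eta^2/(4\lambda)}\le\sqrt{\delta/\lambda}\,K_1(\eta)\cdot(4\pi\delta)^{-1/2}(4\pi)^{1/2}$ since $\delta\le\lambda$ forces $\delta\eta^2/(4\lambda)\le\eta^2/4$; collecting constants gives $\|\psi\|^m_{L^m_{K_\lambda}}\le(\lambda/\delta)^{m/2}\|\tilde\psi\|^m_{L^m_{K_1}}$ after a symmetric computation on the right-hand side $\|\phi\|_{L^m_{K_\lambda}}$, whence (\ref{45}); and combining this with (\ref{12}) at $\delta=1$ applied to $\phi_\delta$ yields the $L^k\to L^m$ version. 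The main obstacle is precisely this bookkeeping of Gaussian normalization constants across the two scalings — one must be careful that the exponent $(\lambda/\delta)^{1/2}$, and not a larger power, comes out — but it is routine once the conjugation identity $T_\delta(\sigma)\phi=(T_1(\sigma)\phi_\delta)(\cdot/\sqrt\delta)$ is in hand.
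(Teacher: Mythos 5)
Your treatment of (\ref{11}) and (\ref{12}) is exactly the paper's: conjugate $\mathcal L_\delta$ to $\mathcal L_1$ via $\phi\mapsto\phi(\sqrt\delta\,\cdot)$, note that this is an isometry between $L^m_{K_\delta}$ and $L^m_{K_1}$, and quote the known $\delta=1$ contraction and delayed smoothing results. That part is fine.

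The proof of the mixed-weight estimate (\ref{45}), however, contains a step that fails. After the substitution you need to bound $K_\lambda(\sqrt\delta\,\eta)=(\delta/\lambda)^{1/2}(4\pi\delta)^{-1/2}e^{-\delta\eta^2/(4\lambda)}$ by a multiple of $K_1(\eta)\propto e^{-\eta^2/4}$. But for $\delta\le\lambda$ one has $\delta\eta^2/(4\lambda)\le\eta^2/4$, hence $e^{-\delta\eta^2/(4\lambda)}\ge e^{-\eta^2/4}$: the inequality goes the \emph{wrong} way, and the ratio $K_\lambda(\sqrt\delta\,\eta)/K_1(\eta)\propto e^{\eta^2(\lambda-\delta)/(4\lambda)}$ is unbounded whenever $\delta<\lambda$. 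This is the same obstruction you correctly flagged at the start of that paragraph (the weights $K_\lambda$ and $K_\delta$ are not comparable), and it reappears intact after the scaling; no bookkeeping of normalization constants can repair it. The paper's proof of (\ref{45}) avoids comparing weights altogether: it returns to the original (non-self-similar) variables and uses the pointwise Gauss--Weierstrass kernel domination
\begin{equation*}
(4\pi\delta t)^{-1/2}e^{-|y-z|^2/(4\delta t)}\le\Bigl(\frac{\lambda}{\delta}\Bigr)^{1/2}(4\pi\lambda t)^{-1/2}e^{-|y-z|^2/(4\lambda t)},\qquad 0<\delta\le\lambda,
\end{equation*}
(here the exponentials compare in the \emph{right} direction), which gives the pointwise bound $|T_\delta(\sigma)\phi|\le(\lambda/\delta)^{1/2}\,T_\lambda(\sigma)|\phi|$; taking the $L^m_{K_\lambda}$ norm and applying the contraction (\ref{11}) for the semigroup $T_\lambda$ in its \emph{own} natural weight $K_\lambda$ yields (\ref{45}), and combining with (\ref{12}) for $T_\lambda$ gives the $L^k\to L^m$ version. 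You should replace your weight-comparison computation by this pointwise semigroup domination.
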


\begin{proof}
We put $\overline{w}(\sigma,\,\theta)=
(T_{\delta}(\sigma)\phi)(\sqrt{\delta}\,\theta).$ Then, by
(\ref{43}), it follows that $\overline{w}$ is the solution of
\begin{equation*}
\left\{
  \begin{array}{ll}
    \overline{w}_\sigma=\mathcal{L}_{1}\overline{w}, & \hbox{ } \theta \in\mathbb{R},\,\,\sigma>0,\\
    \overline{w}(0,\,\theta)=\phi(\sqrt{\delta}\,\theta), & \hbox{ }\theta \in\mathbb{R}.
  \end{array}
\right.
\end{equation*}
 Then \begin{eqnarray}\label{44}
 & &\overline{w}(\sigma,\,\theta)=\bigl [T_1(\sigma)\phi(\sqrt{\delta}\,.)\bigr](\theta).
 \end{eqnarray}
By (\ref{44}) and \cite[Lemma 3.1(i), p.176]{souplet}, we obtain
 \begin{eqnarray*}
 \|T_{\delta}(\sigma)\phi\|_{L_{K_{\delta}}^m}\hspace{-0,6cm}& &=\|(T_{\delta}(\sigma)\phi)(\sqrt{\delta}\,.)\|_{L_{K_{1}}^m}
=\|T_{1}(\sigma)\phi(\sqrt{\delta}\,.)\|_{L_{K_{1}}^m}\\
 \hspace{-0,6cm}& &\leq\|\phi(\sqrt{\delta}\,.)\|_{L_{K_{1}}^m}
 =\|\phi\|_{L_{K_{\delta}}^m},\quad \hbox{for all}\; \sigma\geq0.
 \end{eqnarray*}
Let next $0<\delta\le \lambda<\infty$. Denote by
$(S_{\delta}(t))_{t\geq0}$ the semigroup associated with $\delta \partial^2_y$ in $\mathbb{R}$
 and let the functions $u(t,\,y)$ and $w(\sigma,\,\theta)$ be related by the
 following backward self-similar transformation (with $ T^*=1$, $d=0$):
$$
\sigma=-\log(1-t)\in [0,\,\infty),\qquad
\theta=\frac{y}{\sqrt{1-t}}=e^{\sigma/2}y \in \mathbb{R},\qquad
w(\sigma,\,\theta)=u(t,\,y).
$$
We have, for all $\sigma\geq0$,
 \begin{eqnarray*}
 \bigl|\bigl[T_\delta(\sigma)\phi \bigr](\theta)\bigr|= \bigl| \bigl[S_{\delta}(t)u_0 \bigr](y)\bigr|\hspace{-0,6cm}& &=\left|(4\pi \delta t)^{-1/2}\int_{\mathbb{R}} e^{\frac{-|y-z|^2}{4 \delta t}}u_0(z)dz\right|\\
 \hspace{-0,6cm}& &\leq\Bigl(\frac{\lambda}{\delta}\Bigr)^{1/2}(4\pi\lambda t)^{-1/2}\int_{\mathbb{R}} e^{\frac{-|y-z|^2}{4\lambda t}}\left|u_0(z)\right|dz\\
 \hspace{-0,6cm}& &=\Bigl(\frac{\lambda}{\delta}\Bigr)^{1/2} \bigl[S_{\lambda}(t)\bigl|u_0 \bigr| \bigr](y)
 =\Bigl(\frac{\lambda}{\delta}\Bigr)^{1/2}\bigl[T_{\lambda}(\sigma)\bigl|\phi\bigr| \bigr](\theta).
  \end{eqnarray*}
Inequality (\ref{45}) then follows from (\ref{11}).
 
To prove assertion $(2)$, we recall that, by e.g.~\cite[Lemma 3.1(ii), p.176]{souplet}, 
for any $1\leq k<m<\infty$, there exist $\hat C, \sigma^\ast>0$ such that
$$
\|T_{1}(\sigma)\phi\|_{L_{K_{1}}^m}\leq \hat C\|\phi\|_{L_{K_{1}}^k},
 \quad\; \mbox{for all}\; \sigma\geq \sigma^\ast,\,\phi\in L^\infty(\mathbb{R}).
$$
We may then argue similarly as for assertion $(1)$.
\end{proof}

\subsection{Proof of Proposition~\ref{15}}
The proof is long and technical. We split it in several steps. 
Assume $p\ge q$ without loss of generality, hence $\alpha\ge\beta$.

\smallskip
{\bf Step 1.} {\it Definition of suitably modifed solutions.} As
mentioned before we lack a global type~I blow-up estimate. However,
we have a local type~I blow-up estimate, away from the origin.
Indeed, by (\ref{28}) in Proposition \ref{65}, we know that
\begin{eqnarray}\label{16}
(T^\ast-t)^\alpha u(t,\,y)\leq N_0,\quad(T^\ast-t)^\beta v(t,\,y)\leq N_0,\qquad 0\le t<T^\ast,
\ d_1\le y<R,
\end{eqnarray}
with $N_0=M_0\,d_1^{-n}$. We shall thus truncate the radial domain and
consider suitably controlled extensions of the solution to the real
line. We first define the following extensions $\widetilde u, \widetilde v\ge 0$ of $u, v$ by setting:
\begin{equation}\label{defutilde}
\widetilde u(t,\,y):=
\begin{cases}
u(t,\,y), & y\in [d_1,\,R],\\
\noalign{\vskip 1mm}
0, & y\in \mathbb{R}\setminus [d_1,\,R],
\end{cases}
\qquad\hbox{ for any $t\in [0,\,T^\ast)$,}
\end{equation}
and $\widetilde v(t,\,y)$ similarly.

Next, let $M\ge N_0$ to be chosen below.
For given $t_0\in [0,\,T^\ast)$, let
$(\overline{u},\,\overline{v})=(\overline{u}(t_0;\cdot,\cdot),$ $\,\overline{v}(t_0;\cdot,\cdot))$
be the solution of the following auxiliary problem:
\begin{equation}\label{defbaruv}
\left\{
  \begin{array}{ll}
    \overline{u}_{t}-\delta\overline{u}_{yy}=F(\widetilde u,\,\widetilde v),
    &\,t_0<t<T^\ast,\ y \geq d_1,\\
    \overline{v}_{t}-\overline{v}_{yy}=G(\widetilde u,\,\widetilde v),
    &\,t_0<t<T^\ast,\ y \geq d_1,\\
     \overline{u}(t,\,d_1)=M(T^\ast-t)^{-\alpha}, &t_0<t<T^\ast,\\
     \overline{v}(t,\,d_1)=M(T^\ast-t)^{-\beta}, &t_0<t<T^\ast,\\
   \overline{u}(t_0,\,y)=\widetilde u(t_0,\,y), &y  \geq d_1,\\
    \overline{v}(t_0,\,y)=\widetilde v(t_0,\,y), &y  \geq d_1.
\end{array}
\right.
\end{equation}
It is clear that $\overline u, \,\overline v \ge 0$ exist on $[t_0,\,T^\ast)\times [d_1,\,\infty)$.
Also, using (\ref{16}) and $M\ge N_0$, we deduce from the maximum principle that
\begin{equation}\label{comptildebar}
\widetilde u\le \overline{u},\ \ \widetilde v\le \overline{v}
\quad\hbox{ on $[t_0,\,T^\ast)\times [d_1,\,\infty)$.}
\end{equation}
Now choosing 
\begin{equation}\label{choiceMM0}
M=\max\Bigl(N_0,\ c_2\alpha^{-1}(N_0^p+N_0^r+{T^\ast}^{\alpha+1}),
\ c_2\beta^{-1}(N_0^q+N_0^s+{T^\ast}^{\beta+1})\Bigr),
\end{equation}
where $c_2$ is from (\ref{controle})--(\ref{controle1}), and using (\ref{controle})--(\ref{controle1b}), (\ref{16}), (\ref{defutilde}), (\ref{choiceMM0}), we have
$$F(\widetilde u,\widetilde v)
\le c_2(\widetilde{v}^p+\widetilde{u}^r+1)\le c_2\bigl((N_0^p+N_0^r)(T^\ast-t)^{-\alpha-1}+1\bigr)
\le \alpha M(T^\ast-t)^{-\alpha-1}$$
and
$$G(\widetilde u,\widetilde v)
\le c_2(\widetilde{u}^q+\widetilde{v}^s+1)\le c_2\bigl((N_0^q+N_0^s)(T^\ast-t)^{-\beta-1}+1\bigr)
\le \beta M(T^\ast-t)^{-\beta-1}.$$
We may thus use $M(T^\ast-t)^{-\alpha}$ (resp., $M(T^\ast-t)^{-\beta}$)
as a supersolution of the inhomogeneous, linear heat equation in (\ref{defbaruv}),
verified by $\overline u$ (resp. $\overline v$) on $[t_0,\,T^\ast)\times [d_1,\,\infty)$,
and infer from the maximum principle that
\begin{equation}\label{estimbar2M1}
0\leq \overline u\le M(T^\ast-t)^{-\alpha}, \quad
 0\leq \overline v\le M(T^\ast-t)^{-\beta} \quad\hbox{ on
$[t_0,\,T^\ast)\times [d_1,\,\infty).$}
\end{equation}

We next extend $(\overline{u},\,\overline{v})$ by odd reflection
for $y<d_1$, i.e., we set:
 \begin{eqnarray*}
 & &\overline u(t,\,d_1-y) = 2M(T^\ast-t)^{-\alpha}-\overline u(t,\,d_1+y),\qquad t_0\le t<T^\ast,\ y>0,\\
 & &\overline v(t,\,d_1-y) = 2M(T^\ast-t)^{-\beta}-\overline v(t,\,d_1+y),\qquad t_0\le t<T^\ast,\ y>0.
  \end{eqnarray*}
 From (\ref{estimbar2M1}), along with (\ref{comptildebar}) and (\ref{defutilde}), we have
\begin{equation}\label{estimbar2M}
0\le \overline u\le 2M(T^\ast-t)^{-\alpha}, \quad
0\le \overline v\le 2M(T^\ast-t)^{-\beta}
\quad\hbox{ on $[t_0,\,T^\ast)\times\mathbb{R}$}
\end{equation}
and 
\begin{equation}\label{comptildebar2}
\widetilde u\le \overline u, \quad
\widetilde v\le \overline v
\quad\hbox{ on $[t_0,\,T^\ast)\times\mathbb{R}.$}
\end{equation}
It is easy to see that the functions
$\overline u,\,  \overline v\in C^{1,2}((t_0,\,T^\ast)\times \mathbb{R})$
and that we have
\begin{equation*}
\left\{
  \begin{array}{ll}
    \overline{u}_{t}-\delta\overline{u}_{yy}=F_1(t,\,y),
    &\,t_0<t<T^\ast,\ y\in\mathbb{R},\\
    \overline{v}_{t}-\overline{v}_{yy}=G_1(t,\,y),
    &\,t_0<t<T^\ast,\ y\in\mathbb{R},\\
\end{array}
\right.
\end{equation*}
where
\begin{equation}\label{defF1}
F_1(t,\,y):=
\begin{cases}
2\alpha M(T^\ast-t)^{-\alpha-1}-F(\widetilde u,\,\widetilde v)(t,\,2d_1-y), & y<d_1,\\
\noalign{\vskip 1mm}
F(\widetilde u,\,\widetilde v)(t,\,y), & y\ge d_1,
\end{cases}
\end{equation}
\begin{equation}\label{defG1}
G_1(t,\,y):=
\begin{cases}
2\beta M(T^\ast-t)^{-\beta-1}-G(\widetilde u,\,\widetilde v)(t,\,2d_1-y), & y<d_1,\\
\noalign{\vskip 1mm}
G(\widetilde u,\,\widetilde v)(t,\,y), & y\ge d_1.
\end{cases}
\end{equation}
\smallskip
{\bf Step 2.} {\it Self-similar rescaling of modifed solutions.}
We now fix $d\in (d_1,\,d_0)$
(say, $d=(d_0+d_1)/2$) and pass to self-similar variables $(\sigma,\,\theta)$
around $(T^\ast,\,d)$, cf.~(\ref{defsimilvar}).
In these variables, we first define the rescaled solution $(\widetilde w,\,\widetilde z)=(\widetilde w_d, \,\widetilde z_d)$,
associated with the extended solution $(\widetilde u,\,\widetilde v)$,
namely,
\begin{equation}\label{deftildewz}
\left\{
  \begin{array}{llll}
    \widetilde w(\sigma,\,\theta)&=&(T^\ast-t)^{\alpha}\widetilde u(t,\,y),
    & \quad \hat\sigma\le \sigma<\infty,\ \theta\in \mathbb{R}, \\
    \widetilde z(\sigma,\,\theta)&=&(T^\ast-t)^{\beta}\widetilde v(t,\,y),
    & \quad \hat\sigma\le \sigma<\infty,\ \theta\in \mathbb{R}, \\
   \end{array}
\right.
\end{equation}
where $\hat\sigma=-\log T^\ast$.
For given $t_0\in [0,\,T^\ast)$ (cf.~Step~1), we also define
 $(\overline{w},\,\overline{z})=(\overline{w}_d(t_0;\cdot,\,\cdot),\,\overline{z}_d(t_0;\cdot,\,\cdot))$,
 associated with the modifed solution $(\overline{u}(t_0;\cdot,\,\cdot),\overline{v}(t_0;\cdot,\,\cdot))$, given by
\begin{equation}\label{defbarwz}
\left\{
  \begin{array}{llll}
\overline w(\sigma,\,\theta)&=&(T^\ast-t)^{\alpha}\overline u(t,\,y),
    & \quad \sigma_0\le \sigma<\infty,\ \theta\in \mathbb{R}, \\
\overline z(\sigma,\,\theta)&=&(T^\ast-t)^{\beta}\overline v(t,\,y),
    & \quad \sigma_0\le \sigma<\infty,\ \theta\in \mathbb{R}, \\
   \end{array}
\right.
\end{equation}
where $\sigma_0=-\log(T^\ast-t_0)$ $\ge \hat \sigma$.
At this point, we stress that $(\overline w,\,\overline z)$
depends on the choice of $\sigma_0$ (or $t_0$), whereas $(\widetilde w,\, \widetilde z)$ does not.
Actually, in Step 3, the $(\overline w,\,\overline z)$ will be used as auxiliary functions
in order to establish suitable estimates on $(\widetilde w, \,\widetilde z)$ itself.

Set $\ell=d-d_1>0$. Owing to (\ref{estimbar2M}), (\ref{comptildebar2}), we have
 \begin{equation}\label{bound-2M}
 \widetilde w\le \overline w\le 2M, \quad
\widetilde z\le \overline z\le 2M
\quad\hbox{ on $[\sigma_0,\,\infty)\times \mathbb{R}$}
 \end{equation}
 and, for all $\sigma\ge\hat\sigma$,
 \begin{equation}\label{monot-tilde}
 \theta\mapsto  \widetilde w(\sigma,\theta) \ \hbox{ and }\ \theta\mapsto  \widetilde z(\sigma,\theta)
 \ \hbox{ are nonincreasing for $\theta\in [-\ell e^{\sigma/2},\infty)$,}
  \end{equation}
due to (\ref{monot}).
Then, using (\ref{eqsimilV}), (\ref{defF1}), (\ref{defG1}),
 $\alpha+1=p\beta,$ $\beta+1=q\alpha$, $\alpha(r-1)-1=0$ and $\beta(s-1)-1=0$, we see that
$(\overline{w},\,\overline{z})$  is a solution of
\begin{equation}\label{13}
\hspace{-0,2cm}\left\{
  \begin{array}{llll}
 \overline{w}_\sigma-\mathcal{L}_{\delta}\overline{w}+\alpha \overline{w}&=&
 F_2(\sigma,\,\theta),
     & \quad \sigma_0  <  \sigma<\infty,\ \theta\in \mathbb{R}, \\
 \overline{z}_\sigma-\mathcal{L}_{1}\overline{z}+\beta \overline{z}&=& G_2(\sigma,\,\theta),
    & \quad \sigma_0 <\sigma<\infty,\ \theta\in \mathbb{R}, \\
  \end{array}
\right.
\end{equation}
where
\begin{multline}\label{defF2}
F_2(\sigma,\,\theta)=
e^{-(\alpha+1)\sigma}F_1\bigl(T^\ast-e^{-\sigma},\,d+\theta e^{-\sigma/2}\bigr) \\
\le  c_2\left(\widetilde z^p(\sigma)+\widetilde w^r(\sigma)+ e^{-(\alpha+1)\sigma} \right)+2\alpha
M\chi_{\{\theta<-\ell e^{\sigma/2}\}}
\end{multline}
and
\begin{multline}\label{defG2}
 G_2(\sigma,\,\theta)=
e^{-(\beta+1)\sigma}G_1\bigl(T^\ast-e^{-\sigma},\,d+\theta e^{-\sigma/2}\bigr)\\
\le  c_2\left(\widetilde w^q(\sigma)+ \widetilde z^s(\sigma)+e^{-(\beta+1)\sigma} \right)+2\beta
M\chi_{\{\theta<-\ell e^{\sigma/2}\}}
\end{multline}
Also, using the last two conditions in (\ref{defbaruv}),
 along with (\ref{deftildewz}), (\ref{defbarwz})
and (\ref{bound-2M}), we see that
\begin{equation}\label{split-barwz}
\overline w(\sigma_0)
\le\widetilde w(\sigma_0)+2M\chi_{\{\theta<-\ell e^{\sigma_0/2}\}}
\quad\hbox{and}\quad
\overline z(\sigma_0)
\le\widetilde z(\sigma_0)+2M\chi_{\{\theta<-\ell e^{\sigma_0/2}\}}.
\end{equation}

In the next steps, we shall estimate $(\widetilde w, \,\widetilde z)$
by using semigroup and delayed smoothing arguments.
As compared with the situation in \cite{souplet}, we have here additional terms which
come from the reflection procedure.
However, thanks to the self-similar change of variables,
whose center $d$ is shifted to the right of the reflection point $d_1$,
the contribution of these terms, as $\sigma\to\infty$, will be localized exponentially far away
at $-\infty$ in space and thus can be made arbitrarily small for $\tau_0$ small.
Also, the need to handle two semigroups, due to the different diffusivities, as well as added nonlinear terms,
cause some technical complications, which require for instance an additional interpolation argument.

\smallskip
 {\bf Step 3.} {\it First semigroup estimates for $(\widetilde w,\, \widetilde z)$}.
We claim that, for all $\sigma_0\ge \hat \sigma$ and $\sigma>0$, we have
\begin{multline}
\widetilde{w}(\sigma_0+\sigma)\leq
   e^{-\alpha \sigma}T_{\delta}(\sigma)
   \Bigl[\widetilde w(\sigma_0)+2M\chi_{\{\theta<-\ell e^{\sigma_0/2}\}}\Bigr]\\
+ c_2\displaystyle\int_0^\sigma
e^{-\alpha(\sigma-\tau)}T_{\delta}(\sigma-\tau)\left(\widetilde{z}^p(\sigma_0+\tau)+\widetilde w^r(\sigma_0+\tau)+e^{-(\alpha+1)(\sigma_0+\tau)}\right)d\tau\\
+2\alpha M\displaystyle \int_0^\sigma
e^{-\alpha(\sigma-\tau)}T_{\delta}(\sigma-\tau)
\chi_{\{\theta<-\ell e^{(\sigma_0+\tau)/2}\}}d\tau\quad\label{estim-wtilde}
  \end{multline}
  and
\begin{multline}
 \widetilde{z}(\sigma_0+\sigma)\leq
   e^{-\beta \sigma}T_{1}(\sigma)\Bigl[\widetilde z(\sigma_0)+2M\chi_{\{\theta<-\ell e^{\sigma_0/2}\}}\Bigr]\\
+c_2\displaystyle\int_0^\sigma
e^{-\beta(\sigma-\tau)}T_{1}(\sigma-\tau)\left(\widetilde{w}^q(\sigma_0+\tau)+\widetilde z^s(\sigma_0+\tau)+e^{-(\beta+1)(\sigma_0+\tau)}\right)d\tau\\
+2\beta M\displaystyle \int_0^\sigma e^{-\beta(\sigma-\tau)}T_{1}(\sigma-\tau)\chi_{\{\theta<-\ell e^{(\sigma_0+\tau)/2}\}}d\tau,\quad\label{estim-ztilde}
\end{multline}
  and that, moreover,
   \begin{multline}
\widetilde w(\sigma_0+\sigma)+\widetilde z(\sigma_0+\sigma)\leq e^{M_1\sigma}S(\sigma)\Bigl[\widetilde w(\sigma_0)+\widetilde z(\sigma_0)+4M\chi_{\{\theta<-\ell e^{\sigma_0/2}\}}\Bigr] \\
+c_2\displaystyle\int_0^\sigma e^{M_1(\sigma-\tau)}S(\sigma-\tau)\left[e^{-(\alpha+1)(\sigma_0+\tau)}+e^{-(\beta+1)(\sigma_0+\tau)}\right]d\tau\\
+2\alpha M \displaystyle\int_0^\sigma e^{M_1(\sigma-\tau)}S(\sigma-\tau) \chi_{\{\theta<-\ell e^{(\sigma_0+\tau)/2}\}}d\tau,\quad \label{estim-H}
 \end{multline}
 where
 $$(S(\sigma))_{\sigma\geq0}=(T_{\delta}(\sigma)+T_{1}(\sigma))_{\sigma\geq0}$$
 and
 $$M_1=c_2\max\bigl((2M)^{p-1},\, (2M)^{q-1},\, (2M)^{r-1},\, (2M)^{s-1}\bigr).$$
(Note that, as announced, estimates
(\ref{estim-wtilde})-(\ref{estim-H}) do not involve $(\overline w(t_0;\cdot,\cdot),\, \overline z(t_0;\cdot,\cdot))$ anymore.)

\smallskip
Let us first verify (\ref{estim-wtilde})-(\ref{estim-ztilde}). We fix $\sigma_0\ge \hat \sigma$ and consider
$(\overline{w},\,\overline{z})=\bigl(\overline{w}_d(t_0;\cdot,\,\cdot),\,$ $\overline{z}_d(t_0;\cdot,\,\cdot)\bigr)$,
defined in (\ref{defbarwz}) with $\sigma_0=-\log(T^\ast-t_0)$.
 we use (\ref{13}) and the variation of constants formula to write
$$
\overline{w}(\sigma_0+\sigma)=e^{-\alpha \sigma}T_{\delta}(\sigma)\overline w(\sigma_0)
 +\,\int_0^\sigma
e^{-\alpha(\sigma-\tau)}T_{\delta}(\sigma-\tau)F_2(\sigma_0+\tau,\,\cdot)d\tau
$$
for all $\sigma>0$, hence, by (\ref{defF2}),
\begin{multline}
\overline{w}(\sigma_0+\sigma)\leq
   e^{-\alpha \sigma}T_{\delta}(\sigma)\overline{w}(\sigma_0)
   +2\alpha M\displaystyle \int_0^\sigma e^{-\alpha(\sigma-\tau)}T_{\delta}(\sigma-\tau)
\chi_{\{\theta<-\ell e^{(\sigma_0+\tau)/2}\}}d\tau\\
+c_2\displaystyle \int_0^\sigma
e^{-\alpha(\sigma-\tau)}T_{\delta}(\sigma-\tau)\left(\widetilde{z}^p(\sigma_0+\tau)+\widetilde w^r(\sigma_0+\tau)+e^{-(\alpha+1)(\sigma_0+\tau)}\right)d\tau.
\quad\label{estim-wbar}
  \end{multline}
Similarly, by exchanging the roles of
$\overline{w}$, $\widetilde{w}$, $p,$ $r,$ $\alpha,$ and $\overline{z},$ $\widetilde{z},$ $q$, $s,$ $\beta$,
we obtain
\begin{multline}
\overline{z}(\sigma_0+\sigma)\leq
   e^{-\beta \sigma}T_{1}(\sigma)\overline{z}(\sigma_0)
   +2\beta M\displaystyle \int_0^\sigma e^{-\beta(\sigma-\tau)}T_{1}(\sigma-\tau)
\chi_{\{\theta<-\ell e^{(\sigma_0+\tau)/2}\}}d\tau\\
+c_2\displaystyle \int_0^\sigma
e^{-\beta(\sigma-\tau)}T_{1}(\sigma-\tau)\left(\widetilde{w}^q(\sigma_0+\tau)+\widetilde z^s(\sigma_0+\tau)+e^{-(\beta+1)(\sigma_0+\tau)}\right)d\tau.
\quad\label{estim-zbar}
  \end{multline}
Inequalities (\ref{estim-wtilde})-(\ref{estim-ztilde}) then follow from  (\ref{estim-wbar}),  (\ref{estim-zbar}), (\ref{bound-2M}) and (\ref{split-barwz}).

To verify (\ref{estim-H}), we set $H:=\overline{w}+\overline{z}$.
Adding up (\ref{estim-wbar}) and  (\ref{estim-zbar}), and recalling $\alpha\ge\beta$, we easily get
\begin{equation}\label{ineqhatH1}
H(\sigma_0+\sigma)\leq S(\sigma)H(\sigma_0)+\int_0^\sigma S(\sigma-\tau)
\bigl[M_1 H(\sigma_0+\tau)+D(\tau)\bigr]\, d\tau,\quad \sigma\ge 0,
\end{equation}
where
$$
D(\tau,\cdot)= c_2\bigl[ e^{-(\alpha+1)(\sigma_0+\tau)}+e^{-(\beta+1)(\sigma_0+\tau)}\bigr]
+2\alpha M\chi_{\{\theta<-\ell e^{(\sigma_0+\tau)/2}\}},\quad \tau\ge 0.
$$
Set
\begin{equation}\label{ineqhatH2}
\widehat H(\sigma_0+\sigma):=
e^{M_1\sigma}S(\sigma)H(\sigma_0)+ \int_0^\sigma e^{M_1(\sigma-\tau)}S(\sigma-\tau)D(\tau)d\tau,
\quad \sigma\ge 0.
\end{equation}
By direct computation, using the semigroup properties of $(S(\sigma))_{\sigma\ge 0}$ and Fubini's theorem, we see that
\begin{equation}\label{ineqhatH3}
 \widehat H(\sigma_0+\sigma)=S(\sigma)H(\sigma_0)+\int_0^\sigma S(\sigma-\tau)
\bigl[M_1  \widehat H(\sigma_0+\tau)+D(\tau)\bigr]\, d\tau,\quad \sigma>0.
\end{equation}
Combining (\ref{ineqhatH1}), (\ref{ineqhatH3}) and using the positivity-preserving property of $(S(\sigma))_{\sigma\geq0}$, we obtain
\begin{equation}\label{ineqhatH4}
[H-\widehat H]_+(\sigma_0+\sigma)\le M_1\int_0^\sigma S(\sigma-\tau)[H-\widehat H]_+(\sigma_0+\tau)\, d\tau,\quad \sigma>0.
\end{equation}
Letting now $\bar\delta=\max(\delta,1)$ and $K=K_{\bar\delta}$, 
we deduce from (\ref{45}) in Lemma \ref{66} that
 \begin{equation}
 \label{51} 
\|S(\sigma)\phi\|_{L_{K}^k}\leq
\widetilde C\|\phi\|_{L_{K}^k},\quad \sigma\geq 0,\,\phi\in
L^\infty(\mathbb{R}), \, 1\le k<\infty,
 \end{equation}
with $\widetilde C=\widetilde C(\delta)\ge 1$. Therefore, it follows from (\ref{ineqhatH4}) that
$$
\bigl\|[H-\widehat H]_+(\sigma_0+\sigma)\bigr\|_{L_{K}^1}\le
\widetilde C M_1\int_0^\sigma \bigl\|[H-\widehat H]_+(\sigma_0+\tau)\bigr\|_{L_{K}^1}\, d\tau,\quad \sigma>0,
$$
and we infer from Gronwall's Lemma that $H(\sigma_0+\sigma)\le\widehat H(\sigma_0+\sigma)$
for all $\sigma\ge 0$.
Inequality (\ref{estim-H}) then follows from \eqref{bound-2M} and (\ref{split-barwz}).

\smallskip
{\bf Step  4.} {\it Small time estimate of rescaled solutions.} At this point, we set, as before,
$\bar\delta=\max(\delta,1)$ and $K=K_{\bar\delta}$, and we fix
\begin{equation}\label{condmlarge}
m>\max\Bigl[p,\,q,\,s,\,r,\,1+r(r-1)(\alpha-\beta)\Bigr]
\end{equation}
and let $\sigma^\ast$ be given by Lemma~\ref{66}(2), with $k=1$.
We note that, by Lemma \ref{66}, we have
 \begin{equation}\label{57}
\|S(\sigma)\phi\|_{L_{K}^m}\leq
\widetilde C_0 \|\phi\|_{L_{K}^1},\quad \sigma\geq \sigma^\ast,\,\phi\in L^\infty(\mathbb{R}),
\end{equation}
with $\widetilde C_0=\widetilde C_0(p,\,q,\,s,\,r,\delta)\ge 1$.
Also, by (\ref{51}), we have
\begin{eqnarray}
\hspace{-1.4cm}\|S(\sigma)\chi_{\{\theta<-A\}}\|_{L_{K}^k}
\hspace{-0.5cm}& &\le\widetilde C \|\chi_{\{\theta<-A\}}\|_{L_{K}^k} =
\widetilde C\left((4\pi\overline\delta)^{-1/2}\int_{-\infty}^{-A} \exp\Bigl({\frac{-\theta^2}{4\overline\delta}}\Bigr)\,d\theta\right)^{1/k}
 \notag \\
\hspace{-0.5cm}& &\le C_0\exp(-(8k\overline\delta)^{-1}A^2),
\quad\hbox{ for all $A>0$ and $1\le k\le m$,} \label{estim-indicator}
\end{eqnarray}
with $C_0=C_0(p,\,q,\,s,\,r,\delta)\ge 1$.

Let $\eta>0$.
We claim that there exists $\tau_1\in(0,\,T^\ast)$, depending only on $\eta$ and and on the parameters
\begin{equation}\label{listparam}
p,\,q,\,r,\,s,\,\delta,\,c_1,\,c_2,\,d_0,\,d_1,\,n,\,R,\,T^\ast,
\end{equation}
such that:
    \begin{eqnarray}
\hbox{ For any $t_1\in[T^\ast-\tau_1,\,T^\ast)$ satisfying (\ref{17}) and $\sigma_1=-\log(T^*-t_1)$, we have}
\notag\\
\|\widetilde{w}(\sigma_1+\sigma)\|_{L_{K}^1}+\|\widetilde{z}(\sigma_1+\sigma)\|_{L_{K}^1}\leq \widetilde C_1 \eta,
\qquad 0<\sigma\leq\sigma^\ast,
   \label{21}
    \end{eqnarray}
with $\widetilde C_1=3\widetilde C e^{M_1\sigma^{\ast}}>0.$

To prove the claim, we choose $\sigma_0=\sigma_1$ in (\ref{estim-H}).
Observe that, by assumption (\ref{17}) and owing to (\ref{monot}),
we have $\widetilde w(\sigma_1,\, \cdot),\, \widetilde z(\sigma_1,\, \cdot)\le\eta$
 on $\mathbb{R}$, hence
\begin{equation}\label{cond2eta}
\|\widetilde{w}(\sigma_1)\|_{L_{K}^1}
+\|\widetilde {z}(\sigma_1)\|_{L_{K}^1}\le 2\eta.
\end{equation}
 Using (\ref{estim-H}), (\ref{51}), (\ref{estim-indicator}), (\ref{cond2eta}),
$e^{\sigma_1}=(T^\ast-t_1)^{-1}\ge \tau_1^{-1}$ and assuming $\tau_1<1$,
we deduce that, for $0\le \sigma\le\sigma^\ast$,
\begin{eqnarray*}
&&\hspace{-1.5cm}\|\widetilde{w}(\sigma_1+\sigma)\|_{L_{K}^1}+\|\widetilde {z}(\sigma_1+\sigma)\|_{L_{K}^1}\\
\noalign{\vskip 1mm}
 &\le& 2\widetilde C e^{M_1\sigma^{\ast}}\eta +4\widetilde C C_0Me^{M_1\sigma^{\ast}} \exp(- (8\bar\delta\tau_1)^{-1}\ell ^2)\\
 \noalign{\vskip 1mm}
&&+2 C \widetilde C\sigma^\ast e^{M_1\sigma^{\ast}}\tau_1^{\beta+1}+2\alpha C_0 \widetilde C\sigma^{\ast} Me^{M_1\sigma^{\ast}
\exp\bigl(- (8\bar\delta\tau_1)^{-1}\ell^2\bigr)}\\
\noalign{\vskip 1mm}
&\le&  2\widetilde C e^{M_1\sigma^{\ast}}\bigr[\eta+c_2\sigma^\ast\tau_1^{\beta+1}
+C_0 M (\alpha\sigma^\ast+2)\exp(-(8\bar\delta\tau_1)^{-1}\ell^2)\bigr].
\end{eqnarray*}
For $\tau_1\in(0,\,T^\ast)$ sufficiently small, depending only on $\eta$ and on the parameters
in~(\ref{listparam}), we finally get (\ref{21}) with
$\widetilde C_1=3\widetilde Ce^{M_1\sigma^{\ast}}$.

    \smallskip
{\bf Step 5.} {\it Large time estimate of rescaled solutions.}
We claim that there exist $\eta>0$ and $\tau_0\in(0,\,\tau_1(\eta)]$,
depending only on the parameters in~(\ref{listparam}), such that:
     \begin{eqnarray}
\hbox{ for any $t_1\in[T^\ast-\tau_0,\,T^\ast)$ satisfying (\ref{17}), we have
$\mathcal{A}_{\eta,\,t_1}=(0,\,\infty)$,}
    \end{eqnarray}
where $\sigma_1=-\log(T^*-t_1)$ and
$$\mathcal{A}_{\eta,t_1}=\Bigl\{\sigma>0\,;\ 
e^{\alpha\tau}\|\widetilde{w}(\sigma_1+\sigma^\ast+\tau)\|_{L_{K}^1}+e^{\beta\tau}\|\widetilde{z}(\sigma_1+\sigma^\ast+\tau)\|_{L_{K}^1}\leq
2\widetilde C\widetilde C_1\eta,\ \ \tau\in[0,\,\sigma]\Bigr\}.$$

First observe that $\mathcal{A}_{\eta,\,t_1}\neq\emptyset$, due to
(\ref{21}) and the continuity of the function $\sigma\mapsto
e^{\alpha
\sigma}\|\widetilde{w}(\sigma_1+\sigma^\ast+\sigma)\|_{L_{K}^1}+e^{\beta
\sigma}\|\widetilde{z}(\sigma_1+\sigma^\ast+\sigma)\|_{L_{K}^1}$.
We denote  
$$\overline T=\sup \mathcal{A}_{\eta,\,t_1} \in (0,\,\infty].$$
Assume for contradiction that $\overline T<\infty$.
Then by (\ref{21}), recalling that $\alpha\ge\beta$, we have
\begin{eqnarray}\label{22}
  \hspace{-2cm}& &\|\widetilde{w}(\sigma_1+\sigma^\ast+\sigma)\|_{L_{K}^1}+\| \widetilde{z}(\sigma_1+\sigma^\ast+\sigma)\|_{L_{K}^1}\leq 2 \widetilde{C} \widetilde C_1\eta e^{-\beta
  \sigma},\quad -\sigma^\ast\leq \sigma\leq \overline T.
\end{eqnarray}
For $0\leq\tau\leq \overline T$,
we apply (\ref{estim-H}) with $\sigma_0=\sigma_1+\tau$ and $\sigma=\sigma^\ast$.
Using (\ref{51}), (\ref{57}),  (\ref{estim-indicator}), (\ref{cond2eta}), (\ref{22}),
$e^{\sigma_1}=(T^\ast-t_1)^{-1}\ge \tau_0^{-1}$ and assuming $\tau_0<1$, we get
\begin{eqnarray*}
&&\|\widetilde{w}(\sigma_1+\sigma^\ast+\tau)\|_{L_{K}^m}
+\|\widetilde{z}(\sigma_1+\sigma^\ast+\tau)\|_{L_{K}^m}\\
\noalign{\vskip 1mm}
&\leq& 2\widetilde C_0 e^{M_1\sigma^\ast}
\left(\|\widetilde{w}(\sigma_1+\tau)\|_{L_{K}^1}
+\|\widetilde{z}(\sigma_1+\tau)\|_{L_{K}^1}\right)
+ 8C_0Me^{M_1\sigma^\ast}\exp\bigl(-( 8\bar\delta \tau_0 m)^{-1}\ell^2e^{\tau}\bigr) \\
\noalign{\vskip 1mm}
&&+4 c_2  \widetilde C \sigma^\ast e^{M_1\sigma^{\ast}}\tau_0^{\beta+1}e^{-(\beta+1)\tau}
+4\alpha M \widetilde C C_0  \sigma^\ast
e^{M_1\sigma^\ast}\exp\bigl(-( 8\bar\delta \tau_0 m)^{-1}\ell^2e^{\tau}\bigr)\\
\noalign{\vskip 1mm}
&\leq&
 4\widetilde C_1\widetilde C_0\widetilde C e^{M_1\sigma^\ast} \eta e^{- \beta(\tau-\sigma^\ast)}+4 c_2  \widetilde C \sigma^\ast e^{M_1\sigma^{\ast}}\tau_0^{\beta+1}e^{-(\beta+1)\tau}\\
 \noalign{\vskip 1mm}
&&+4C_0  (2+\alpha\widetilde C\sigma^\ast)Me^{M_1\sigma^\ast}\exp\bigl(-( 8\bar\delta \tau_0 m)^{-1}\ell^2e^{\tau}\bigr).
     \end{eqnarray*}
Put $\widetilde C_2=5\widetilde C_1\widetilde C_0\widetilde C e^{(M_1+\beta)\sigma^\ast}$.
For $\tau_0\in (0,\tau_1(\eta)]$ sufficiently small, depending only on $\eta$ and on the parameters in~(\ref{listparam}), 
it follows that
\begin{eqnarray}\label{23}
       & &\|\widetilde{w}(\sigma_1+\sigma^\ast+\tau)\|_{L_{K}^m}+\|\widetilde{z}(\sigma_1+\sigma^\ast+\tau)\|_{L_{K}^m} \leq
       \widetilde C_2 \eta e^{- \beta\tau},\ \ 0\le\tau\leq \overline T.
     \end{eqnarray}
     
Next let $0<\sigma\leq \overline T$. Now using (\ref{estim-wtilde}) with $\sigma_0=\sigma_1+\sigma^\ast$, (\ref{51}),  (\ref{estim-indicator}), $T_\delta(\sigma)\le S(\sigma)$ and $e^{\sigma_1}\ge \tau_0^{-1}$, we obtain
\begin{eqnarray*}
e^{\alpha\sigma}\|\widetilde{w}(\sigma_1+\sigma^\ast+\sigma)\|_{L_{K}^1}
&\leq&\|T_{\delta}(\sigma)\widetilde{w}(\sigma_1+\sigma^\ast)\|_{L_{K}^1}
 +2M\|T_{\delta}(\sigma)\chi_{\{\theta<-\ell e^{(\sigma_1+\sigma^\ast)/2}\}}\|_{L_{K}^1} \\
&&+c_2\int_0^\sigma e^{\alpha\tau}\|T_{\delta}(\sigma-\tau)\widetilde{z}^p(\sigma_1+\sigma^\ast+\tau)\|_{L_{K}^1} d\tau\\
&&+c_2\int_0^\sigma e^{\alpha\tau}\|T_{\delta}(\sigma-\tau)\widetilde{w}^r(\sigma_1+\sigma^\ast+\tau)\|_{L_{K}^1} d\tau\\
&&+c_2\int_0^\sigma e^{\alpha\tau}\|T_{\delta}(\sigma-\tau)e^{-(\alpha+1)(\sigma_1+\sigma^\ast+\tau)}\|_{L_{K}^1} d\tau\\
&&+2\alpha M\displaystyle \int_0^\sigma e^{\alpha\tau}\|T_{\delta}(\sigma-\tau)
\chi_{\{\theta<-\ell e^{(\sigma_1+\sigma^\ast+\tau)/2}\}}\|_{L_{K}^1}d\tau
     \end{eqnarray*}
     hence,
 \begin{eqnarray*}
e^{\alpha\sigma}\|\widetilde{w}(\sigma_1+\sigma^\ast+\sigma)\|_{L_{K}^1}
& \leq& \widetilde C \|\widetilde{w}(\sigma_1+\sigma^\ast)\|_{L_{K}^1}
+2C_0 M\exp(-(8\overline\delta\tau_0)^{-1}\ell ^2)\\
&&+c_2 \widetilde C \int_0^\sigma e^{\alpha\tau}\|\widetilde{z}^p(\sigma_1+\sigma^\ast+\tau)\|_{L_{K}^1} d\tau\\
&&+c_2 \widetilde C \int_0^\sigma  e^{\alpha\tau}\|\widetilde{w}^r(\sigma_1+\sigma^\ast+\tau)\|_{L_{K}^1} d\tau\\  
&&+c_2 \widetilde C  \tau_0^{\alpha+1}+2\alpha C_0 M \displaystyle\int_0^\sigma e^{\alpha\tau}
\exp\bigl(-(8\overline\delta\tau_0)^{-1}\ell^2e^{\tau}\bigr)\,d\tau.
     \end{eqnarray*}
By taking $\tau_0$ possibly smaller (dependence as above), we may ensure that
$$c_2  \widetilde C \tau_0^{\alpha+1}+2C_0 M\exp(- (8\overline\delta\tau_0)^{-1}\ell ^2)
+2\alpha C_0 M \displaystyle\int_0^\infty e^{\alpha\tau}
\exp\bigl(-(8\overline\delta\tau_0)^{-1}\ell^2e^{\tau}\bigr)\,
d\tau\le \eta^2,$$
hence,
\begin{equation}\label{estim-ealpha}
      \begin{array}{lll}
      &e^{\alpha\sigma}\|\widetilde{w}(\sigma_1+\sigma^\ast+\sigma)\|_{L_{K}^1}
\leq\widetilde C\|\widetilde{w}(\sigma_1+\sigma^\ast)\|_{L_{K}^1}
+\eta^2 \\
                   \noalign{\vskip 2mm}
&\quad+c_2 \widetilde C\displaystyle \int_0^\sigma e^{\alpha\tau}\|\widetilde{z}(\sigma_1+\sigma^\ast+\tau)\|_{L_{K}^p}^p d\tau+c_2 \widetilde C \displaystyle \int_0^\sigma
 e^{\alpha\tau}\|\widetilde{w}(\sigma_1+\sigma^\ast+\tau)\|_{L_{K}^r}^r d\tau.
         \end{array}
  \end{equation}
 To estimate the last integral, setting $\nu=(m-r)/(m-1)\in (0,\,1)$ and interpolating between
(\ref{23}) and the fact that $\tau\in\mathcal{A}_{\eta,\,t_1}$,
we write
$$
      \begin{array}{ll}
\|\widetilde{w}(\sigma_1+\sigma^\ast+\tau)\|_{L_{K}^r}
&\le \|\widetilde{w}(\sigma_1+\sigma^\ast+\tau)\|_{L_{K}^1}^\nu
\|\widetilde{w}(\sigma_1+\sigma^\ast+\tau)\|_{L_{K}^m}^{1-\nu} \\
                   \noalign{\vskip 2mm}
&\le(2\widetilde C\widetilde C_1\eta e^{-\alpha\tau})^\nu(\widetilde C_2 \eta e^{-\beta\tau})^{1-\nu}=\widetilde C_3 \eta e^{-(\alpha\nu+\beta(1-\nu))\tau},
         \end{array}
         $$
with $\widetilde C_3=(2\widetilde C\widetilde C_1)^\nu \widetilde C_2^{1-\nu}$.
Using this, along with (\ref{9}) and (\ref{23}), we obtain
$$
      \begin{array}{lll}
      &e^{\alpha\sigma}\|\widetilde{w}(\sigma_1+\sigma^\ast+\sigma)\|_{L_{K}^1}\leq
 \widetilde C\|\widetilde{w}(\sigma_1+\sigma^\ast)\|_{L_{K}^1}+\eta^2\\
                   \noalign{\vskip 2mm}
&\qquad\qquad\qquad +c_2 \widetilde C (\widetilde C_2\eta)^p \displaystyle\int_0^\sigma e^{\alpha\tau}e^{-\beta p\tau} d\tau
+c_2 \widetilde C (\widetilde C_3\eta)^r \displaystyle\int_0^\sigma e^{\alpha\tau}e^{-(\alpha\nu+\beta(1-\nu))r\tau} d\tau.
         \end{array}
$$
Since $\alpha-\beta p=-1$, $\alpha=\alpha r-1$ and $\nu_1:=1-(\alpha-\beta)(1-\nu)r>0$,
owing to (\ref{condmlarge}), we deduce that
\begin{equation}\label{24}
e^{\alpha\sigma}\|\widetilde{w}(\sigma_1+\sigma^\ast+\sigma)\|_{L_{K}^1}\leq
\widetilde C\|\widetilde{w}(\sigma_1+\sigma^\ast)\|_{L_{K}^1}+\eta^2
+c_2 \widetilde C \widetilde C_2^p\eta^p+ \frac{c_2 \widetilde C \widetilde C_3^r\eta^r}{\nu_1}.
\end{equation}

Similarly as (\ref{estim-ealpha}), by using (\ref{estim-ztilde}) instead of (\ref{estim-wtilde}), we get
$$
      \begin{array}{lll}
      &e^{\beta\sigma}\|\widetilde{z}(\sigma_1+\sigma^\ast+\sigma)\|_{L_{K}^1}
\leq\widetilde C\|\widetilde{z}(\sigma_1+\sigma^\ast)\|_{L_{K}^1}
+\eta^2 \\
                   \noalign{\vskip 2mm}
&\quad+c_2 \widetilde C\displaystyle \int_0^\sigma e^{\beta\tau}\|\widetilde{w}(\sigma_1+\sigma^\ast+\tau)\|_{L_{K}^q}^q d\tau+c_2 \widetilde C \displaystyle \int_0^\sigma
 e^{\beta\tau}\|\widetilde{z}(\sigma_1+\sigma^\ast+\tau)\|_{L_{K}^s}^s d\tau.
         \end{array}
$$
Therefore, by (\ref{23}),
$$
      \begin{array}{lll}
      &e^{\beta\sigma}\|\widetilde{z}(\sigma_1+\sigma^\ast+\sigma)\|_{L_{K}^1}\leq
 \widetilde C\|\widetilde{z}(\sigma_1+\sigma^\ast)\|_{L_{K}^1}+\eta^2\\
                   \noalign{\vskip 2mm}
&\qquad\qquad\qquad +c_2 \widetilde C (\widetilde C_2\eta)^q \displaystyle\int_0^\sigma e^{\beta\tau}e^{-\beta q\tau} d\tau
+c_2 \widetilde C (\widetilde C_2\eta)^s \displaystyle\int_0^\sigma e^{\beta\tau}e^{-\beta s\tau} d\tau.
         \end{array}
$$
This time, the above interpolation is not necessary.
Indeed, using $\beta=\beta s-1$, we directly get
\begin{equation}\label{25}
e^{\beta\sigma}\|\widetilde{z}(\sigma_1+\sigma^\ast+\sigma)\|_{L_{K}^1}\leq
\widetilde C\|\widetilde{z}(\sigma_1+\sigma^\ast)\|_{L_{K}^1}+\eta^2
+\frac{c_2 \widetilde C \widetilde C_2^q\eta^q}{\beta(q-1)}+c_2 \widetilde C \widetilde C_2^{s}\eta^{s}.
\end{equation}
 
 Finally, for $\sigma=\overline T$ in (\ref{24}) and (\ref{25}), by definition of $\overline T$ and by using (\ref{21}) with $\sigma=\sigma^\ast$, we obtain
\begin{eqnarray*}
   \hspace{1,5cm}2\widetilde C\widetilde C_1\eta
      \hspace{-0,6cm} & &=e^{\alpha \overline T}\|\widetilde{w}(\sigma_1+\sigma^\ast+\overline T)\|_{L_{K}^1}
   +e^{\beta \overline T}\|\widetilde{z}(\sigma_1+\sigma^\ast+\overline T)\|_{L_{K}^1}\\
   \hspace{1,5cm}\hspace{-0,6cm} & &
   \leq \widetilde C \|\widetilde{w}(\sigma_1+\sigma^\ast)\|_{L_{K}^1}
   +\widetilde C\|\widetilde{z}(\sigma_1+\sigma^\ast)\|_{L_{K}^1}\\
   \hspace{1,5cm}\hspace{-0,6cm} & &\hspace{0,3cm}+2\eta^2
+c_2 \widetilde C \widetilde C_2^p\eta^p+ \frac{c_2 \widetilde C \widetilde C_3^r\eta^r}{ \nu_1}
+\frac{c_2 \widetilde C \widetilde C_2^q\eta^q}{\beta(q-1)}+c_2 \widetilde C \widetilde C_2^s\eta^s\\
  \hspace{1,5cm}\hspace{-0,6cm} & &
\leq \widetilde C \widetilde C_1\eta+C_4[\eta^2+\eta^p+\eta^r+\eta^q+\eta^s],
\end{eqnarray*}
hence
$\widetilde C\widetilde C_1\le C_4(\eta+\eta^{p-1}+\eta^{r-1}+\eta^{q-1}+\eta^{s-1})$,
where $C_4>0$ depends only on the parameters in~(\ref{listparam}).
Since $p,q,r,s>1$, choosing $\eta>0$ sufficiently small (which now fixes $\tau_0$), we reach a contradiction.
Consequently, $\overline T=\infty$
 and the claim is proved.

    \smallskip
{\bf Step 6.} {\it Conclusion.} Let $\eta,\,\tau_0$ be as in Step~5
and let $t_1\in[T^\ast-\tau_0,\,T^\ast)$ satisfy (\ref{17}).
It follows from  the definition of $\mathcal{A}_{\eta,\,t_1}$ that
\begin{equation}\label{Lambda0}
\Lambda_0=\sup_{\sigma\ge \sigma_1+\sigma^\ast}
\Bigl(e^{\alpha\sigma}\|\widetilde{w}(\sigma)\|_{L_{K}^1}
+e^{\beta\sigma}\|\widetilde{z}(\sigma)\|_{L_{K}^1}\Bigr)<\infty.
\end{equation}
Set $L:=\int_{-1}^0 K(\theta)\,d\theta>0$.  For all $t\in[\hat T^\ast-\ell^{-2},\,T^\ast)$, 
recalling (\ref{defsimilvar}), we have $\ell e^{\sigma/2}\ge 1$, hence
\begin{equation}\label{CompTildeMonot}
\widetilde{w}(\sigma,\,0)\le L^{-1}\int_{-1}^0\widetilde{w}(\sigma,\,\theta)K(\theta)\,d\theta,
\qquad
\widetilde{z}(\sigma,\,0)\le L^{-1}\int_{-1}^0\widetilde{z}(\sigma,\,\theta)K(\theta)\,d\theta,
\end{equation}
owing to (\ref{monot-tilde}).
Let then $\hat t_1=T^\ast-\min\bigl(\ell^{-2},e^{-(\sigma_1+\sigma^\ast)}\bigr)$. 
It follows from (\ref{defutilde}), (\ref{deftildewz}), (\ref{Lambda0}), (\ref{CompTildeMonot}) that, 
for all $t\in[\hat t_1,\,T^\ast)$, 
\begin{eqnarray*}
u(t,\,d)+v(t,\,d)
&=&e^{\alpha \sigma}\widetilde{w}(\sigma,\,0)+e^{\beta \sigma}\widetilde{z}(\sigma,\,0) \\
&\leq& 2L^{-1}\Bigl(e^{\alpha\sigma}\|\widetilde{w}(\sigma)\|_{L_{K}^1}
+e^{\beta\sigma}\|\widetilde{z}(\sigma)\|_{L_{K}^1}\Bigr) \le 2L^{-1}\Lambda_0.
\end{eqnarray*}
Using (\ref{monot}), we conclude that $d_0>d$ is not a blow-up point.
\qed


\section{Convergence of rescaled solutions to solutions of a system
of ordinary differential inequalities}
\setcounter{equation}{0} 

For given $\rho_1\in (0,R)$, we again switch to similarity variables around $(T^\ast,\,\rho_1)$,
already used in the previous section. Namely, we set:
 \begin{equation}\label{defsimilvar1}
\sigma=-\log(T^\ast-t),\qquad
\theta=\frac{\rho-\rho_1}{\sqrt{T^\ast-t}}=e^{\sigma/2}(\rho-\rho_1),
\end{equation}
and consider the rescaled solution $(W,Z)=(W_{\rho_1},Z_{\rho_1})$ 
associated with $(u,v)$:
 \begin{equation}\label{defsimilvar2}
 W(\sigma,\,\theta)=(T^\ast-t)^{\alpha}u(t,\,\rho),
\qquad
Z(\sigma,\,\theta)=(T^\ast-t)^{\beta}v(t,\,\rho),
\end{equation}
defined for $\sigma\in [\hat\sigma,\,\infty)$ with $\hat\sigma=-\log T^\ast$ and 
$\theta\in (-\rho_1 e^{\sigma/2},(R-\rho_1)e^{\sigma/2})$.

The goal of this section is to show that any such rescaled solution $(W,Z)$ behaves, in a suitable sense
as $\sigma\to\infty$ and $\theta\to\infty$,
like a (distribution) solution of the following system of ordinary differential inequalities:
\begin{equation}\label{27b}
\left\{
  \begin{array}{lll}
     &\phi'+\alpha \phi\ge c_1\psi^p,\\
          \noalign{\vskip 1mm}
    &\psi'+\beta \psi \ge c_1\phi^q
      \end{array}
\right.
\end{equation}
on the whole real line $(-\infty,\infty)$
(however, we shall eventually only use the fact that $(\phi,\psi)$ solves (\ref{27b}) on some bounded open interval).
Moreover, we single out a simple but crucial property of local interpendence of components 
for solutions of (\ref{27b}).

\begin{pro}\label{30lem}
Let $\Omega= B(0,\,R),$ $p,\,q>1$, $\delta >0$.
Assume (\ref{classical})--(\ref{monotone}), (\ref{controle})--(\ref{controle1b})
and let the solution $(u,\,v)$ of (\ref{1}) satisfy $T^\ast<\infty$.
Let $\rho_1\in (0,R)$ and let $(W,Z)$ be defined by (\ref{defsimilvar1})-(\ref{defsimilvar2}). 

(i) Then, for all sequence $\sigma_j\to\infty$, there exists a subsequence (not relabeled) such that, for each $\sigma\in\mathbb{R}$, 
 \begin{equation}\label{defphipsi}
 \phi(\sigma)=\lim_{\theta\to\infty}\Bigl(\lim_{j\to\infty}W(\sigma+\sigma_j,\theta)\Bigr),\qquad 
\psi(\sigma)=\lim_{\theta\to\infty}\Bigl(\lim_{j\to\infty}Z(\sigma+\sigma_j,\theta)\Bigr)
\end{equation}
exist and are finite, where the limits in $j$ are uniform for $(\sigma,\theta)$ 
in bounded subsets of $\mathbb{R}\times\mathbb{R}$, and the limits in $\theta$ are monotone nonincreasing.

(ii) The functions $\phi,\psi$ defined in (\ref{defphipsi}) belong to $BC(\mathbb{R})$ and $(\phi,\psi)$ is a nonnegative solution in $\mathcal{D}'(\mathbb{R})$ of system (\ref{27b}).

(iii) Let $I\subset \mathbb{R}$ be an open interval containing $0$.
For any nonnegative functions $\phi, \psi\in C(I)$ satisfying (\ref{27b}) in $\mathcal{D}'(I)$,
we have $\phi(0)=0$ if and only if $\psi(0)=0$.
\end{pro}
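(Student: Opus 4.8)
\emph{Proof proposal for Proposition~\ref{30lem}.} The plan is to treat the three parts essentially independently. Part (iii) is short and relies only on continuity together with the distributional inequalities (\ref{27b}). Parts (i) and (ii) are obtained together, by a compactness analysis of the rescaled solution $(W,Z)$ in similarity variables whose only a priori input is the \emph{local} type~I bound (\ref{28}) of Proposition~\ref{65} (recall that no global type~I estimate is available), followed by a passage to the limit at $\theta=+\infty$.

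\emph{Parts (i)--(ii): compactness and the limiting PDE.} In the variables (\ref{defsimilvar1})--(\ref{defsimilvar2}), with $\rho=\rho_1+\theta e^{-\sigma/2}$, the pair $(W,Z)$ solves, on the expanding interval $\theta\in(-\rho_1 e^{\sigma/2},(R-\rho_1)e^{\sigma/2})$,
\[
W_\sigma-\mathcal{L}_{\delta}W+\alpha W=\delta\tfrac{n-1}{\rho}e^{-\sigma/2}W_\theta+e^{-(\alpha+1)\sigma}F\bigl(e^{\alpha\sigma}W,e^{\beta\sigma}Z\bigr),
\]
and an analogous equation holds for $Z$ (with $\mathcal{L}_1$, $\beta$, coefficient $\tfrac{n-1}{\rho}e^{-\sigma/2}$, and $e^{-(\beta+1)\sigma}G$). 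By (\ref{controle})--(\ref{controle1b}) and the relations $\alpha+1=p\beta$, $\alpha(r-1)=1$, $\beta+1=q\alpha$, $\beta(s-1)=1$, the inhomogeneous term lies between $c_1Z^p$ and $c_2(Z^p+W^r+e^{-(\alpha+1)\sigma})$ (resp.\ between $c_1W^q$ and $c_2(W^q+Z^s+e^{-(\beta+1)\sigma})$). Given $\sigma_j\to\infty$, set $W^{(j)}(\sigma,\theta)=W(\sigma+\sigma_j,\theta)$, $Z^{(j)}=Z(\sigma+\sigma_j,\cdot)$. On any box $[-A,A]^2$, for $j$ large $\rho$ stays in a fixed compact subinterval of $(0,R)$, so (\ref{28}) gives $0\le W^{(j)},Z^{(j)}\le C$ there with $C=C(n,\rho_1,M_0)$; moreover the drift $\tfrac\theta2$ is bounded while the geometric coefficient $\delta\tfrac{n-1}{\rho}e^{-(\sigma+\sigma_j)/2}\to0$. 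Standard interior parabolic $L^p$-estimates then yield uniform local bounds, hence (Arzel\`a--Ascoli and a diagonal extraction over an exhaustion of $\mathbb{R}^2$) a subsequence along which $W^{(j)}\to\Phi$, $Z^{(j)}\to\Psi$ in $C_{loc}(\mathbb{R}^2)$, together with convergence of the first- and second-order derivatives in $L^p_{loc}$. By (\ref{monot}), each $\Phi(\sigma,\cdot)$, $\Psi(\sigma,\cdot)$ is nonincreasing on all of $\mathbb{R}$, and since $0\le\Phi,\Psi\le M_0\rho_1^{-n}$ the limits in (\ref{defphipsi}) exist and are finite; this proves (i). Letting $j\to\infty$ in the differential inequalities (the geometric term drops out, being a vanishing factor times the $L^p_{loc}$-bounded $W^{(j)}_\theta$) gives, a.e.\ and in $\mathcal{D}'(\mathbb{R}^2)$,
\[
c_1\Psi^p\le\Phi_\sigma-\mathcal{L}_{\delta}\Phi+\alpha\Phi\le c_2(\Psi^p+\Phi^r),\qquad c_1\Phi^q\le\Psi_\sigma-\mathcal{L}_{1}\Psi+\beta\Psi\le c_2(\Phi^q+\Psi^s).
\]

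\emph{Part (ii): passing to $\theta=+\infty$.} Fix $\chi\in C_c^\infty(\mathbb{R})$ with $\chi\ge0$ and $\xi\in C_c^\infty((0,\infty))$ with $\xi\ge0$, $\int\xi=1$; test the $\Phi$-inequalities against $\chi(\sigma)R^{-1}\xi(\theta/R)$ and let $R\to\infty$. Since $\Phi$ is bounded with $\Phi(\sigma,\theta)\to\phi(\sigma)$ as $\theta\to\infty$, dominated convergence gives $\int g(\theta)R^{-1}\xi(\theta/R)\,d\theta\to g(+\infty)$ for every bounded $g$ admitting a limit at $+\infty$; hence the $\partial_\sigma$-, zeroth-order and right-hand terms converge to $-\int\phi\chi'$, $\alpha\int\phi\chi$, and (for the two bounds) $c_1\int\psi^p\chi$, $c_2\int(\psi^p+\phi^r)\chi$, while the $\delta\partial_\theta^2$-term carries an extra factor $R^{-2}$ and vanishes. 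The only delicate term is the \emph{unbounded} drift $\tfrac\theta2\Phi_\theta$: writing $\tfrac\theta2\Phi_\theta=\partial_\theta(\tfrac\theta2\Phi)-\tfrac12\Phi$, integrating by parts in $\theta$ and rescaling $\theta=Rs$, its contribution converges to a constant multiple of $\int(\xi+s\xi')\,ds=\int\tfrac{d}{ds}(s\xi)\,ds=0$, so it too contributes nothing. We conclude $c_1\psi^p\le\phi'+\alpha\phi\le c_2(\psi^p+\phi^r)$ and $c_1\phi^q\le\psi'+\beta\psi\le c_2(\phi^q+\psi^s)$ in $\mathcal{D}'(\mathbb{R})$. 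Since $0\le\phi,\psi\le M_0\rho_1^{-n}$, the outer sides lie in $L^\infty(\mathbb{R})$; a distribution squeezed between two $L^\infty$ functions is itself $L^\infty$, so $\phi',\psi'\in L^\infty(\mathbb{R})$, whence $\phi,\psi$ are Lipschitz, bounded and nonnegative, i.e.\ in $BC(\mathbb{R})$, and the left inequalities are precisely (\ref{27b}). This is (ii).

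\emph{Part (iii).} Let $\phi,\psi\in C(I)$ be nonnegative and solve (\ref{27b}) in $\mathcal{D}'(I)$, with $0\in I$, and suppose $\phi(0)>0$. By continuity, $\phi\ge\eps$ on some interval $(-\eta,\eta)\subset I$ with $\eps>0$. The second inequality in (\ref{27b}) gives $(e^{\beta\sigma}\psi)'\ge c_1 e^{\beta\sigma}\phi^q\ge c_1\eps^q e^{\beta\sigma}$ in $\mathcal{D}'((-\eta,\eta))$; since $\sigma\mapsto e^{\beta\sigma}\psi(\sigma)$ is continuous, integrating over $(-\eta/2,0)$ yields $\psi(0)\ge e^{-\beta\eta/2}\psi(-\eta/2)+c_1\eps^q\int_{-\eta/2}^0 e^{\beta\sigma}\,d\sigma>0$. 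Hence $\psi(0)=0$ forces $\phi(0)=0$. Exchanging the roles of $(\phi,\alpha,p)$ and $(\psi,\beta,q)$ and using instead the first inequality in (\ref{27b}) gives the converse implication, proving (iii). The main obstacle is Part (ii): extracting the ODI (\ref{27b}) from the rescaled PDE at $\theta=+\infty$; what makes it work is the two-sided control in (\ref{controle}) together with the radial monotonicity, the key and least transparent point being that the unbounded drift term $\tfrac\theta2\Phi_\theta$ produces no net contribution in the limit $R\to\infty$.
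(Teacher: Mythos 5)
Your proof is correct and follows the same overall strategy as the paper (compactness of the time-translated rescaled solutions via the local type~I bound of Proposition~\ref{65}, then passage to $\theta=+\infty$ to extract the ODI), but the key technical step of part~(ii) is handled by a genuinely different device. The paper first discards the drift term $\frac{\theta}{2}\partial_\theta w$ using the sign information $w_\theta\le 0$ coming from (\ref{monot}), obtaining the drift-free parabolic inequality (\ref{35a}), and then tests against \emph{translated} functions $\xi(\sigma)\chi(\theta-j)$, killing the diffusion via $\int\chi_{\theta\theta}=0$. You instead keep the full operator $\mathcal{L}_\delta$ and test against the \emph{dilated} family $\chi(\sigma)R^{-1}\xi(\theta/R)$ with $\mathrm{supp}\,\xi\subset(0,\infty)$, eliminating the unbounded drift through the exact identity $\int(\xi+s\xi')\,ds=\int (s\xi)'\,ds=0$ after integration by parts; this does not use the monotonicity sign at this stage (only for defining $\phi,\psi$) and sidesteps the fact that $\frac{\theta}{2}w_\theta$ has the wrong sign for $\theta<0$. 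A further difference is that you retain the \emph{upper} bounds from (\ref{controle})--(\ref{controle1}) all the way to the limit, which gives you $\phi'$, $\psi'\in L^\infty$ and hence an explicit Lipschitz argument for the $BC(\mathbb{R})$ claim that the paper leaves implicit (both arguments, yours and the paper's, still identify the pointwise-defined monotone limits in (\ref{defphipsi}) with their continuous representatives without comment, but this is a shared, minor point). In part~(iii) you prove the contrapositive ($\phi(0)>0\Rightarrow\psi(0)>0$) by integrating the $\psi$-inequality, whereas the paper argues by contradiction through the $\phi$-inequality and nonnegativity; both rest on the same elementary fact about distributions with a signed derivative, and both are valid.
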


\begin{proof}
(i) Let $A=\min(\rho_1/2,R-\rho_1)>0$.
By (\ref{28}), we have that
 \begin{equation}\label{bddWZ}
 \hbox{$(W,\,Z)$ is bounded on the set
$D=\{(\sigma,\,\theta)\in \mathbb{R}\times\mathbb{R},\ \sigma> \hat\sigma,\ |\theta|\le Ae^{\sigma/2}\}$}
\end{equation}
and $(W,\,Z)$ solves the system
\begin{equation}\label{13b}
\hspace{-0,2cm}\left\{
  \begin{array}{llll}
 W_\sigma-\delta W_{\theta\theta} + \Bigl[\frac{\theta}{2}-\delta\frac{(n-1)e^{-\sigma/2}}{\rho_1+\theta e^{-\sigma/2}}\Bigr]
 \,W_{\theta}+\alpha W
 &=&e^{-(\alpha+1)\sigma} F(e^{\alpha\sigma}W,\,e^{\beta\sigma}Z)\\
 Z_\sigma-Z_{\theta\theta} +\Bigl[\frac{\theta}{2}-
 \frac{(n-1)e^{-\sigma/2}}{\rho_1+\theta e^{-\sigma/2}}\Bigr]\,Z_{\theta}+\beta Z
 &=& e^{-(\beta+1)\sigma} G(e^{\alpha\sigma}W,\,e^{\beta\sigma}Z)
  \end{array}
\right.
\ \hbox{in $D$.}
\end{equation}
Moreover, by (\ref{62}), (\ref{controle})--(\ref{controle1b}), it follows that
\begin{eqnarray}\label{cc}
c_1\,Z^p\leq 
e^{-(\alpha+1)\sigma} F\bigl(e^{\alpha\sigma}W,\,e^{\beta\sigma}Z\bigr)\leq 
c_2\bigl(Z^p+W^r+e^{-(\alpha+1)\sigma}\bigr).\\ \label{aa}
c_1 \,W^q \leq 
e^{-(\beta+1)\sigma} G\bigl(e^{\alpha\sigma}W,\,e^{\beta\sigma}Z\bigr)\leq
c_2\bigl(W^q+Z^s+e^{-(\beta+1)\sigma}\bigr).
\end{eqnarray}
Denoting the time-translates $W_j(\sigma,\,\theta):= W(\sigma+\sigma_j,\,\theta)$
and $Z_j(\sigma,\,\theta):=Z(\sigma+\sigma_j,\,\theta)$
and setting
$$\mu_j(\sigma,\theta)=\frac{(n-1)e^{-(\sigma+\sigma_j)/2}}{\rho_1+\theta e^{-(\sigma+\sigma_j)/2}},
\quad \eps_j(\sigma)=e^{-(\alpha+1)(\sigma+\sigma_j)},\quad
\widetilde\eps_j(\sigma)=e^{-(\beta+1)(\sigma+\sigma_j)},$$
we have, by (\ref{13b})-(\ref{aa}),
\begin{equation}\label{13c}
\hspace{-0,2cm}\left\{
  \begin{array}{lllll}
c_1Z_j^p
&\le& \partial_\sigma W_j-\delta \partial^2_\theta W_j
+\bigl[\frac{\theta}{2}-\delta\mu_j\bigr]\partial_\theta W_j+\alpha W_j
&\le& c_2\bigl(Z_j^p+W_j^r+\eps_j\bigr)\\
 \noalign{\vskip 1mm}
c_1W_j^q
&\le& \partial_\sigma Z_j- \partial^2_\theta Z_j
+\bigl[\frac{\theta}{2}-\mu_j\bigr]\partial_\theta Z_j+\beta Z_j
&\le& c_2\bigl(W_j^q+Z_j^s+\widetilde\eps_j\bigr)\\
  \end{array}
\right.
\ \hbox{ in $D$.}
\end{equation}

For each compact $Q$ of $\mathbb{R}\times\mathbb{R}$, the sequences 
$(Z_j^p+W_j^r+\eps_j)$ and $(W_j^q+Z_j^s+\widetilde\eps_j)$ 
are defined on $Q$ for $j$ large enough and, owing to (\ref{bddWZ}),
they are bounded in $L^m(Q)$ for each $m\in(1,\,\infty)$. 
Therefore, by (\ref{13c}) and parabolic estimates (see, e.g. \cite[p.438]{pavol}),
the sequences $(W_j)$ and $(Z_j)$ are bounded in $W^{1,2;\,m}(Q)$ for each compact $Q$ of
$\mathbb{R}\times\mathbb{R}$ and each $m\in(1,\,\infty)$. 
Fixing $\alpha\in (0,1)$ and using the compact embeddings $W^{1,2;\,m}(Q)\subset\subset C^{\alpha,1+(\alpha/2)}(\overline Q)$
for $m$ large, we deduce that, for some subsequence (not relabeled),
$(W_j, Z_j)$ converges,
 in $C^{\alpha,1+(\alpha/2)}$ for each compact $Q$ of
$\mathbb{R}\times\mathbb{R}$, to some pair of nonnegative, bounded functions $(w,z)$,
with $w,z\in W^{1,2;\,m}_{loc}(\mathbb{R}\times\mathbb{R})$ for each $m\in(1,\,\infty)$.

Moreover, since $u_\rho,$ $v_\rho\leq0$ by (\ref{monot}), we have 
$\partial_\theta W_j, \partial_\theta Z_j\le 0$ in $D$
 and therefore,  for each~$\sigma\in\mathbb{R}$,
\begin{equation}\label{33}
\hbox{$\mathbb{R}\ni\theta\mapsto w(\sigma,\theta)$ and $\mathbb{R}\ni\theta\mapsto z(\sigma,\theta)$
are nonincreasing.}
\end{equation}
 Since $w$ and $z$ are bounded and nonincreasing, we may define
\begin{eqnarray*}
  & &\phi(\sigma)=\underset{\theta\rightarrow+\infty}{\lim}w(\sigma,\,\theta),\quad\psi(\sigma)=\underset{\theta\rightarrow+\infty}{\lim}z(\sigma,\,\theta),
\end{eqnarray*}
which proves assertion (i).
\smallskip

(ii) We first observe that the properties of the sequence obtained in the previous paragraph
allow us to pass to the limit in the distribution sense in (\ref{13c}) and,
recalling $\partial_\theta W_j, \partial_\theta Z_j\le 0$ in $D$, it follows in particular that 
$(w,z)$ is a (continuous bounded) solution of
\begin{equation}\label{35a}
    \left\{
      \begin{array}{l}
        w_\sigma-\delta w_{\theta\theta}+\alpha w\ge c_1z^p, \\
         \noalign{\vskip 1mm}
        z_\sigma-z_{\theta\theta}+\beta z\ge c_1w^q,
              \end{array}
              \quad\hbox{in $\mathcal{D}'(\mathbb{R}^2)$.}
    \right.
\end{equation}

We can then obtain (\ref{27b}) by the following simple argument.
We check for instance the first inequality in (\ref{27b}), the other being completely similar.
Fix $\chi,\xi\in\mathcal{D}(\mathbb{R})$, with $\chi,\xi\ge 0$ and $\int_{\mathbb{R}}\chi=1$.
For $j\in\mathbb{N},$
replacing $\theta$ by $\theta+j$ in (\ref{35a}) and testing with $\xi(\sigma)\chi(\theta)$, we obtain
\begin{equation}\label{37c}
      \begin{array}{lll}
 \displaystyle\int_{\mathbb{R}} \int_{\mathbb{R}} \bigl[c_1 z^p-\alpha w\bigr](\sigma,\,\theta&{\hskip -3mm}+j)\xi(\sigma)\chi(\theta)\,d\theta d\sigma\\
 &= \Bigl\langle \bigl[c_1 z^p-\alpha w\bigr](\cdot,\,\cdot+j),\xi\otimes\chi\Bigr\rangle \\
                    \noalign{\vskip 1mm}
 &\le\ \Bigl\langle \bigl(w_\sigma-\delta w_{\theta\theta}\bigr)(\cdot,\,\cdot+j),\xi\otimes\chi\Bigr\rangle \\
                   \noalign{\vskip 1mm}
 &= \displaystyle \int_{\mathbb{R}} \int_{\mathbb{R}} (-\xi_\sigma(\sigma)\chi(\theta)-\delta \xi(\sigma)\chi_{\theta\theta}(\theta))w(\sigma,\,\theta+j)\,d\theta d\sigma.
         \end{array}
  \end{equation}
  Due to the boundedness of $w, z$, we may therefore apply the dominated convergence theorem on the first and last terms 
  of (\ref{37c}). Taking $\int_{\mathbb{R}}\chi=1$ and $\int_{\mathbb{R}}\chi_{\theta\theta}=0$ into account, we thus obtain 
$$
      \begin{array}{lll}
\displaystyle\int_{\mathbb{R}} \bigl[  c_1\psi^p-\alpha \phi\bigr](\sigma)\xi(\sigma)d\sigma
 &=&\displaystyle\int_{\mathbb{R}}\int_{\mathbb{R}} 
 \bigl[  c_1\psi^p-\alpha \phi\bigr](\sigma)\chi(\theta)\xi(\sigma)d\theta d\sigma \\
                    \noalign{\vskip 1mm}
 &\le&  \displaystyle \int_{\mathbb{R}}\int_{\mathbb{R}} \bigl(-\xi_\sigma(\sigma)\chi(\theta)
 -\delta \xi(\sigma)\chi_{\theta\theta}(\theta)\bigr)\phi(\sigma)d\theta d\sigma\\
                     \noalign{\vskip 1mm}
 &=& \displaystyle \int_{\mathbb{R}} -\xi_\sigma(\sigma)\phi(\sigma)d\sigma
        \end{array}
$$
and the conclusion follows.

\smallskip
(iii) Assume for contradiction that, for instance, $\phi(0)=0$ and $\psi(0)>0$. 
Then, by continuity, there exists $\eta>0$ such that
$[c_1\psi^p-\alpha \phi](\sigma)\ge \eta$ on $(-\eta,\eta)\subset I$.
Consequently $\phi'\ge \eta$ in $\mathcal{D}'(-\eta,\eta)$.
It is well known that this guarantees
$$
 \phi(y)-\phi(x)\ge \int_x^y \eta\, d\sigma=\eta(y-x)\quad\hbox{ for $-\eta<x<y<\eta$.}
$$
In particular $\phi(x)\le \phi(0)+\eta x=\eta x<0$ for all $x\in(-\eta,0)$: a contradiction.
\end{proof}


\section{Completion of proof of Proposition \ref{30}}

In this section, by using a contradiction argument and the results of Sections 3-5, we complete the proof of Proposition~\ref{30}.

\begin{proof}[Proof of Proposition \ref{30}]
 The upper estimates in (\ref{310})-(\ref{311}) follow from (\ref{28}) in Proposition \ref{65}.
To prove the lower estimates, since $u_\rho, v_\rho\le 0$ and since $u,v>0$ on $[T^\ast/2,\,T^\ast)\times [0,R)$ by the strong maximum principle,
 it suffices to show that, 
for each $\rho_1\in (0,\rho_0)$, 
$$\liminf_{t\to T^\ast}(T^\ast-t)^\alpha u(t,\rho_1)>0
\quad\hbox{and}\quad \liminf_{t\to T^\ast}(T^\ast-t)^\beta v(t,\rho_1)>0.$$
We argue by contradiction and assume for instance that
there exist  $\rho_1\in (0,\rho_0)$ and a sequence $t_j\to T^\ast$ such that
$$\lim_{j\to\infty}(T^\ast-t_j)^\alpha u(t_j,\rho_1)=0.$$
Set $\sigma_j:=-\log(T^\ast-t_j)\to\infty$, let $(W,Z)$ be defined by (\ref{defsimilvar1})-(\ref{defsimilvar2})
 and let $(\phi,\psi)$ be given by Proposition~\ref{30lem}(i).
Since $W(\sigma,\theta)\le W(\sigma,0)$ for all $\theta\in [0,(R-\rho_1)e^{\sigma/2}]$ due to (\ref{monot}), it follows from
(\ref{defphipsi}) that
$$\phi(0)=\lim_{\theta\to\infty}\Bigl(\lim_{j\to\infty}W(\sigma_j,\theta)\Bigr)
\le \lim_{j\to\infty}W(\sigma_j,0)=\lim_{j\to\infty}(T^\ast-t_j)^\alpha u(t_j,\rho_1)=0.$$
By Proposition~\ref{30lem}(ii) and (iii), it follows that $\psi(0)=\phi(0)=0$.
Therefore, with $\eta$ given by Proposition~\ref{15}, we deduce from (\ref{defphipsi}) that there exists $\theta_0>0$ such that
$$\lim_{j\to\infty}W(\sigma_j,\theta_0)\le \eta/2,\quad \lim_{j\to\infty}Z(\sigma_j,\theta_0)\le \eta/2.$$
Then, for all $j$ sufficiently large, we have
$$W(\sigma_j,\theta_0)\le \eta, \quad Z(\sigma_j,\theta_0)\le \eta$$
hence, in view of (\ref{defsimilvar1})-(\ref{defsimilvar2}),
$$(T^\ast-t_j)^\alpha u(t_j,\rho_1+\theta_0 \sqrt{T^\ast-t_j})\le \eta,\quad 
(T^\ast-t_j)^\beta v(t_j,\rho_1+\theta_0 \sqrt{T^\ast-t_j})\le \eta.$$
Taking $j$ large enough so that $\rho_1+\theta_0 \sqrt{T^\ast-t_j}<(\rho_0+\rho_1)/2$ and $T^\ast-t_j\le \tau_0$,
we conclude from Proposition~\ref{15} that $\rho_0$ is not a blow-up point: a contradiction.
\end{proof}

\section{Proof of Theorem~\ref{69} and verification of Examples~\ref{59bEx}.}
\setcounter{equation}{0}  

As a preliminary to the proof of Theorem~\ref{69}, we prove the following proposition.

\begin{pro}\label{69b}
Under the assumptions of Theorem~\ref{69}, there exists a constant $C > 0$ such that
\begin{eqnarray}\label{comparaison1}
\underset{Q_t}{\sup}\,u^{\frac{q+1}{p+1}} \leq C \,\underset{Q_t}{\sup}\, v,\quad T^\ast/2<t<T^\ast
\end{eqnarray}
and
\begin{eqnarray}\label{comparaison2}
\underset{Q_t}{\sup}\,v^{\frac{p+1}{q+1}} \leq C \,\underset{Q_t}{\sup}\, u,\quad T^\ast/2<t<T^\ast,
\end{eqnarray}
where $Q_t=(0,\,t)\times B(0,\,R).$
\end{pro}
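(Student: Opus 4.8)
The plan is to reduce the two inequalities to a single two‑sided estimate for the suprema and then to prove it by a rescaling argument at the blow‑up point, the time‑monotonicity supplying all the a priori bounds that are needed. First I would record that, by \eqref{monot} and the hypothesis $u_t,v_t\ge 0$, for each $t$ the supremum of $u$ over $Q_t$ is attained at $(t,0)$, so that $M(t):=\sup_{Q_t}u=u(t,0)$ and $N(t):=\sup_{Q_t}v=v(t,0)$ are nondecreasing, continuous and positive on $(0,T^\ast)$. Raising to the powers $p+1$ and $q+1$, the inequalities \eqref{comparaison1}--\eqref{comparaison2} are equivalent to
$$c^{-1}N(t)^{p+1}\ \le\ M(t)^{q+1}\ \le\ c\,N(t)^{p+1},\qquad T^\ast/2<t<T^\ast,$$
and, since the two components play symmetric roles (exchange $u,F,p,r,\alpha,\delta$ with $v,G,q,s,\beta,1$), it is enough to prove the upper bound $M(t)^{q+1}\le c\,N(t)^{p+1}$.

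Suppose this fails. Since $M,N$ are continuous and positive on $(T^\ast/2,T^\ast)$, there is $t_j\to T^\ast$ with $M(t_j)^{q+1}/N(t_j)^{p+1}\to\infty$, and since $N(t_j)\ge N(T^\ast/2)>0$ we get $M(t_j)\to\infty$. Put $\lambda_j:=M(t_j)^{-1/(2\alpha)}\to 0$ and rescale around the blow‑up point,
$$u_j(s,y):=\lambda_j^{2\alpha}u\bigl(t_j+\lambda_j^2 s,\,\lambda_j y\bigr),\qquad v_j(s,y):=\lambda_j^{2\beta}v\bigl(t_j+\lambda_j^2 s,\,\lambda_j y\bigr),$$
defined for $s>-t_j\lambda_j^{-2}$ and $|y|<R/\lambda_j$. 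Using \eqref{62}, which gives $\alpha+1=p\beta$, $\beta+1=q\alpha$, $\alpha(r-1)=1$ and $\beta(s-1)=1$, the structure conditions \eqref{controle}--\eqref{controle1b} become scale invariant up to lower order terms:
$$c_1 v_j^p\le \partial_s u_j-\delta\Delta_y u_j\le c_2\bigl(v_j^p+u_j^r+\lambda_j^{2(\alpha+1)}\bigr),\qquad c_1 u_j^q\le \partial_s v_j-\Delta_y v_j\le c_2\bigl(u_j^q+v_j^s+\lambda_j^{2(\beta+1)}\bigr).$$
The decisive point is the behaviour on the backward region $\{s\le 0\}$: there, by $u_t,v_t\ge 0$ and \eqref{monot}, one has $u_j(s,y)\le u_j(0,0)=1$ and $v_j(s,y)\le v_j(0,0)=N(t_j)M(t_j)^{-(q+1)/(p+1)}\to 0$. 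Hence on compact subsets of $\{s\le 0\}\times\mathbb{R}^n$ the pair $(u_j,v_j)$ is uniformly bounded, with $v_j\to 0$.

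Next, by interior parabolic $L^m$‑estimates (the right‑hand sides above being bounded on such sets for $j$ large), $(u_j)$ and $(v_j)$ are bounded in $W^{1,2;m}_{loc}(\{s\le 0\}\times\mathbb{R}^n)$; extracting a subsequence, $u_j$ converges locally uniformly to some continuous $U\ge 0$ with $U(0,0)=1$, while $v_j\to 0$. Since $v_j$ is bounded in $W^{1,2;m}_{loc}$ and $v_j\to 0$, we get $\partial_s v_j-\Delta_y v_j\rightharpoonup 0$ weakly in $L^m_{loc}$, whereas $\partial_s v_j-\Delta_y v_j\ge c_1 u_j^q\to c_1 U^q$ strongly in $L^m_{loc}$; as a weak limit of nonnegative functions is nonnegative, this forces $U\equiv 0$ on $\{s\le 0\}$, contradicting $U(0,0)=1$. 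The reverse inequality $N(t)^{p+1}\le c\,M(t)^{q+1}$ follows from the symmetric argument (rescale so that $v_j(0,0)=1$, whence $u_j(0,0)\to 0$, and pass to the limit in the rescaled first equation); combining the two and absorbing the (trivial) behaviour on the compact interval $[T^\ast/2,T^\ast-\tau]$ into the constant yields \eqref{comparaison1}--\eqref{comparaison2}.

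The step I expect to be the main obstacle is the passage to the limit: one only has one‑sided information on the rescaled solutions, so the limit in the equation cannot be taken by strong convergence and must be handled through the weak‑convergence/positivity mechanism described above. What makes this affordable — and what I would emphasise — is the restriction to the backward half $\{s\le 0\}$, where the uniform upper bounds on $u_j$ and $v_j$, and the vanishing of the second component in the limit, come for free from the monotonicity in time; the verification of the scale invariance of \eqref{controle}--\eqref{controle1b} via the identities in \eqref{62} is routine but must be done carefully to see that the extra terms are genuinely lower order.
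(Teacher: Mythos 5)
Your proposal is correct and follows essentially the same route as the paper: rescale by $M(t_j)^{-1/(2\alpha)}$ around the blow-up point, use the backward-in-time region where the rescaled pair is uniformly bounded with the second component tending to $0$, and let the inequality $\psi_\sigma-\Delta\psi\ge c_1\phi^q$ force the limit of the first component to vanish, contradicting its normalization at the origin. The only differences are cosmetic (the paper recentres at a near-maximum point $(t_j',x_j')$ rather than invoking $u_t\ge0$ to place the maximum at $(t_j,0)$, and phrases the limit passage as convergence to a strong solution of the limiting differential inequalities rather than via your weak-convergence/positivity argument).
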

\begin{proof}  As in \cite{tayachi}, we define the functions $U$, $V$ by:
\begin{eqnarray}
U(t)=\underset{Q_t}{\sup}\,u\quad\mbox{and}\quad\,V(t)=\underset{Q_t}{\sup}\, v.
\end{eqnarray}
Then $U$ and $V$ are positive continuous and nondecreasing on $(0,\,T^\ast)$. Also, since $(u,\,v)$ is a blowing-up solution, it follows that $U$ or $V$ diverges as $t\nearrow T^\ast.$
 We argue by contradiction. Assume that (\ref{comparaison1}) fails.
Then there exists a sequence $t_j\nearrow T^\ast$ as $j\rightarrow \infty$ such that $$V(t_j)U^{-\frac{q+1}{p+1}}(t_j)\rightarrow 0\quad\mbox{as}\quad j\rightarrow\infty.$$
It follows that $U$ must diverge as $t\nearrow T^\ast$. In the rest of the proof, we use the notation
\begin{eqnarray*}
\lambda_j:=U^{-\frac{1}{2\alpha}}(t_j)\underset{j\rightarrow \infty}{\rightarrow0},
\end{eqnarray*}
where $\alpha$ is given by (\ref{62}).

Let $(t'_j,\,x'_j)\in(0,\,t_j]\times B(0,\,R)$ be such that $u(t'_j,\,x'_j)\geq (1/2)U(t_j).$ 
We have $t'_j\rightarrow T^\ast$ as $j\rightarrow\infty.$
Now, we rescale the functions $U$ and $V$ by setting:
\begin{eqnarray*}
& &\phi_j(\sigma,\,y):=\lambda^{2\alpha}u(\lambda_j^2 \sigma+t'_j,\,\lambda_j y+x'_j),\\
& &\psi_j(\sigma,\,y):=\lambda^{2\beta}v(\lambda_j^2 \sigma+t'_j,\,\lambda_j y+x'_j),
\end{eqnarray*}
where $(\sigma,\,y)\in(-\lambda_j^{-2}t'_j,\,\lambda_j^{-2}(T^\ast-t'_j))\times(-\lambda_j^{-1}|x'_j|,\,\lambda_j^{-1}(R-|x'_j|))=:D_j$
and  $\alpha,\,\beta$ are given by (\ref{62}).
 Then, If we restrict $\sigma$ to $(-\lambda_j^{-2}t'_j,\,0],$ we obtain
\begin{eqnarray}\label{propriete}
0\leq\phi_j\leq1,\quad \phi_j(0,\,0)\geq 1/2\quad\mbox{and}\quad 0\leq\psi_j\leq V(t_j)U^{-\frac{q+1}{p+1}}(t_j)\rightarrow 0\;\mbox{as}\; j\rightarrow\infty.
\end{eqnarray}
On the other hand, $(\phi_j,\,\psi_j)$ solves the system:
\begin{equation*}
\left\{
  \begin{array}{llllll}
c_1\psi^p &\le& \phi_\sigma-\delta \Delta\phi &\leq& c_2\left(\psi^p+\phi^r+\lambda_j^{2 (\alpha+1)}\right),\\
c_1\phi^q &\le& \psi_\sigma- \Delta\psi &\leq& c_2\left(\phi^q+\psi^s+\lambda_j^{2 (\beta+1)}\right),
  \end{array}
\right.
\end{equation*}
on $D_j$. By using interior parabolic estimates, there exists a subsequence, still denoted by $(\phi_j,\,\psi_j)$,
converging uniformly on compact subsets of $(-\infty,\,0]\times\mathbb{R}^n$ to  $(\phi,\,\psi)$ a nonnegative 
(strong) solution  of
\begin{equation*}
\hspace{-0,2cm}\left\{
  \begin{array}{llll}
 \phi_\sigma-\delta \Delta\phi &\leq& c_2(\psi^p+\phi^r),\\
 \psi_\sigma- \Delta\psi &\geq& c_1 \phi^q.
  \end{array}
\right.
\end{equation*}
By (\ref{propriete}), it follows that $\phi(0,\,0)\geq 1/2$ and $\psi\equiv0.$ 
But the second equation implies $\phi\equiv0$:  a contradiction. This proves (\ref{comparaison1}). Statement (\ref{comparaison2}) follows by exchanging the roles of $u$, $p$, $r,$ $\alpha$
and $v$, $q$, $s,$ $\beta$.
\end{proof}

\begin{proof}[Proof of Theorem~\ref{69}]
Recall that, under the assumptions of
Theorem~\ref{69}, we know that $\|u(t)\|_\infty=u(t,\,0)$,
$\|v(t)\|_\infty=v(t,\,0)$ and $u(T^\ast,\,0)=v(T^\ast,\,0)=\infty$. By Proposition \ref{69b}, it follows that there
exists $C>0$ such that
\begin{eqnarray}\label{71}
& &v^p(t,\,0)\leq Cu^{r}(t,\,0).\\\label{72}
\mbox{and}\,\,\,& &u^q(t,\,0)\leq Cv^{s}(t,\,0).
\end{eqnarray}
Here and in the rest of the proof, $C$ denotes a positive constant which may vary from line to line.

On the other hand, since $v_t\geq0,$ $u_\rho\leq0$ and $v_\rho\leq0$ then,
\begin{eqnarray*}
 \frac{\partial}{\partial\rho}\left(\frac{1}{2}v_\rho^2+c_2v(u^q+v^s+1))\right)
& &=(v_{\rho\rho}+c_2(u^q+v^s+1))v_\rho+ c_2 q v u^{q-1}u_\rho+c_2 sv^s v_\rho\\
& &\leq(v_{\rho\rho}+F(u,\,v))v_\rho+ c_2 q v u^{q-1}u_\rho+c_2 sv^s v_\rho\\
& &=\left(v_t-\frac{n-1}{\rho}v_\rho\right)v_\rho+ c_2 q v u^{q-1}u_\rho+c_2 sv^s v_\rho\leq0.
\end{eqnarray*}
Consequently,
\begin{eqnarray*}
\left(\frac{1}{2}v_\rho^2+v F(u,\,v)\right)(t,\,\rho)\hspace{-0,6cm}& &\leq\left(\frac{1}{2}v_\rho^2+c_2v(u^q+v^s+1)\right)(t,\,\rho)\\
& &\leq\left(\frac{1}{2}v_\rho^2+c_2v(u^q+v^s+1)\right)(t,\,0)\\
& &\leq c_2 v (u^q+v^s+1)(t,\,0).
\end{eqnarray*}
 Moreover, by (\ref{71}), there exists $C>0$ such that
$v(u^q+v^s+1)(t,\,0)\leq C v^{s+1}(t,\,0)$, hence
\begin{eqnarray*}
\frac{1}{2}v_\rho^2(t,\,\rho)\hspace{-0,6cm}& &\leq C v^{s+1}(t,\,0),
\quad\hbox{for all $t\in(T^\ast/2,\,T^\ast)$ and $\rho\in [0,R]$}.
\end{eqnarray*}
Therefore,
\begin{eqnarray*}
\|v_\rho(t)\|_\infty\hspace{-0,6cm}& &\leq C v^{(s+1)/2}(t,\,0) = C v^{\frac{1}{2\beta}+1}(t,\,0),\quad\hbox{for all}\,\, t\in(T^\ast/2,\,T^\ast).
\end{eqnarray*}
 Arguing as in \cite[p. 187]{souplet},
we deduce that there exist $\eps_0, \eps_1>0$ such that
\begin{eqnarray*}
 \hspace{0,1cm}v(T^\ast,\,|x|)\geq \eps_0 |x|^{-2\beta},\quad\hbox{for all}\,\,|x|\in(0,\,\eps_1).
\end{eqnarray*}
The inequality on $G$ is obtained similarly.
\end{proof}

Finally, we verify the assertions made in Examples~\ref{59bEx}.
\smallskip

(i) Let $F, G$ be given by (\ref{defFGex1})-(\ref{defFGex2}).
Properties (\ref{classical})--(\ref{monotone}) are clear
(for $u,v>0$ in case some of the exponents belong to $(0,1)$).
To check (\ref{controle}), it suffices to estimate each of the products $u^{r_i} v^{s_i}$
with $r_i>0$ (the case $r_i=0$ being immediate). This follows from Young's inequality applied with 
the exponent $\frac{p(q+1)}{r_i(p+1)}>1$, writing
$$u^{r_i} v^{s_i}\le u^{\frac{p(q+1)}{p+1}}+v^{\frac{s_ip(q+1)}{ p(q+1)-r_i(p+1)}}
\le u^{\frac{p(q+1)}{p+1}}+C(v^p+1),$$
where we used $\frac{s_ip(q+1)}{ p(q+1)-r_i(p+1)}\le p$ due to (\ref{defFGex2}).
Property (\ref{controle1}) is obtained similarly.

It thus remains to verify (\ref{controle2}).
Fixing $C_2>C_1>0$, this amounts to finding $\mu,A,\kappa_1,\kappa_2>0$ with $\kappa_1\kappa_2<1$, such that 
\begin{eqnarray*}
& &R_1:= \lambda (\kappa_1 p-1-\mu)v^p+\displaystyle{\sum_{i=1}^{m}} \lambda_i\bigl(r_i+\kappa_1 s_i-1-\mu\bigr) u^{r_i} v^{s_i}\ge 0
\\
& &R_2:= \overline\lambda (\kappa_2 q-1-\mu)v^p+\displaystyle{\sum_{i=1}^{m}} \overline{\lambda}_i\bigl(\kappa_2 
\overline{r}_i+\overline{s}_i-1-\mu\bigr) u^{\overline{r}_i} v^{\overline{s}_i}\ge 0
\end{eqnarray*}
on the set $\{u,v\ge A\,|\, C_1\le \frac{u^{q+1}}{v^{p+1}}\le C_2\}.$
Fix $1/p<\kappa_1<1$, $1/q<\kappa_2<1$ and denote 
$$ I=\bigl\{i\in\{1,\dots,m\};\ r_i\textstyle\frac{p+1}{q+1}+s_i=p\bigr\},\quad
 \overline I=\bigl\{i\in\{1,\dots,m\};\ \overline{r}_i+\overline{s}_i\textstyle\frac{q+1}{p+1}=q\bigr\}.$$
Observe that if $i\in I$, then
$$r_i+\kappa_1 s_i-1\ge r_i+\frac{1}{p}\Bigl(p-r_i\frac{p+1}{q+1}\Bigr)-1=r_i\frac{pq-1}{p(q+1)}$$
and we may also assume $r_i>0$ (since otherwise $r_i=0$, $s_i=p$ and $\lambda_i u^{r_i} v^{s_i}$ can be included into the main term 
$\lambda v^p$). Similarly, if $i\in \overline I$, then
$$\kappa_2 \overline{r}_i+ \overline{s}_i-1\ge \frac{1}{q}\Bigl(q- \overline{s}_i\frac{q+1}{p+1}\Bigr)+\overline{s}_i-1
=\overline{s}_i\frac{pq-1}{q(p+1)}$$
and we may also assume $\overline{s}_i>0$.
Choosing 
$$0<\mu<\min\left(\frac{\kappa_1 p-1}{2},\ \frac{\kappa_2q-1}{2},\ 
\frac{pq-1}{p(q+1)}\min_{i\in I}r_i,\ \frac{pq-1}{q(p+1)}\min_{i\in \overline I}\overline{s}_i\right),$$
it follows that
\begin{eqnarray}\label{ineqR1} 
& &R_1\ge \mu v^p+\displaystyle{\sum_{i\in \{1,\dots,m\}\setminus I}
 \lambda_i\bigl(r_i+\kappa_1 s_i-1-\mu\bigr) u^{r_i} v^{s_i},}\\ \label{ineqR2} 
& &R_2\ge \mu u^q+\displaystyle{\sum_{i\in \{1,\dots,m\}\setminus \overline I}} \overline{\lambda}_i\bigl(\kappa_2 
\overline{r}_i+\overline{s}_i-1-\mu\bigr) u^{\overline{r}_i} v^{\overline{s}_i}.
\end{eqnarray}
Now consider $i\in  \{1,\dots,m\}\setminus I$. We have $r_i\textstyle\frac{p+1}{q+1}+s_i<p$ by (\ref{defFGex2}).
Therefore, on the set $D_A:=\{u,v\ge A\,|\, C_1\le \frac{u^{q+1}}{v^{p+1}}\le C_2\}$,
we have
$$u^{r_i} v^{s_i-p}\le C_2^{r_i/(q+1)}v^{-p+s_i+r_i(p+1)/(q+1)}\le C_2^{r_i/(q+1)}A^{-p+s_i+r_i(p+1)/(q+1)}\to 0$$
as $A\to\infty$. We get the similar property for $i\in  \{1,\dots,m\}\setminus \overline{I}$.
By (\ref{ineqR1})-(\ref{ineqR2}), we conclude that $R_1,R_2\ge 0$ on $D_A$ by taking $A$ large enough.
\smallskip

(ii) Let $F$, $G$ be given by (\ref{defFGex3})-(\ref{defFGex4}).
Properties (\ref{classical}) and (\ref{controle})--(\ref{controle1b}) are clear.
In order to verify (\ref{monotone}) and (\ref{controle2}), since $F_u=G_v=0$, it clearly suffices to find $\eta>0$ such that
$$vF_v(u,\,v)\ge (1+\eta)F(u,\,v),\ \ v\ge 0\quad\hbox{and}\quad uG_u(u,\,v)\ge (1+\eta)G(u,\,v),\ \ u\ge 0.$$
Setting $X=k\log(1+v)$, we compute
$$vF_v-(1+\eta)F=v^p\Bigl[(p-1-\eta)(1+\lambda \sin^2X)+2\lambda k\frac{v}{1+v}\cos X \sin X \Bigr].$$
Using 
$$|2\cos X\sin X|\le \frac{\cos^2X}{\sqrt{1+\lambda}}+\sqrt{1+\lambda}\sin^2X
= \frac{1+\lambda \sin^2X}{\sqrt{1+\lambda}},$$
we get
$$vF_v-(1+\eta)F\ge v^p\Bigl[p-1-\eta-\frac{\lambda k}{\sqrt{1+\lambda}}\Bigr](1+\lambda \sin^2X)\ge 0,\quad v\ge 0,$$
under assumption (\ref{defFGex4}) if we choose $\eta>0$ small.
The inequality for $G$ is similar.

\bibliographystyle{plain}

\end{document}